\documentclass{article}

\RequirePackage{amsthm,amsmath,amsfonts,amssymb}
\RequirePackage[numbers]{natbib}

\usepackage[blocks]{authblk}
\usepackage{amssymb,amsmath,amsthm,mathtools,bbm, enumitem}
\usepackage{footmisc}
\overfullrule=0pt

\usepackage[latin1] {inputenc}
\usepackage{xcolor}
\usepackage{subcaption}
\usepackage{comment}
\usepackage{tikz,caption,float}
\usepackage[colorlinks=true,linkcolor=blue]{hyperref}
\usetikzlibrary{decorations.pathreplacing,angles,quotes}
  
\newtheorem{theorem}{Theorem}[section]
\newtheorem{lemma}[theorem]{Lemma}

\newtheorem{proposition}[theorem]{Proposition}
\newtheorem{corollary}[theorem]{Corollary}

\definecolor{bbm}{RGB}{51,153,0}
\definecolor{above}{RGB}{128,0,128}
\definecolor{below}{RGB}{102,0,204}
\definecolor{cascade}{RGB}{204,0,0}
\definecolor{iid}{RGB}{153,51,0}

\def\paragraph#1{\noindent \textbf{#1}}

\numberwithin{equation}{section}

\def\dist{\mathop{\rm dist}\nolimits}

\def\<{\langle}
\def\>{\rangle}

\def \ba {\begin{array}}
	\def \ea {\end{array}}




%
\newcommand{\be}{\begin{equation}}
	\newcommand{\ee}{\end{equation}}

\newcommand{\bea}{\begin{eqnarray}}
	\newcommand{\eea}{\end{eqnarray}}
\def\TH(#1){\label{#1}}\def\thv(#1){\ref{#1}}
\def\Eq(#1){\label{#1}}\def\eqv(#1){(\ref{#1})}
\def\cov{\hbox{\rm Cov}}
\def\var{\hbox{\rm Var}}

\def \1{\mathbbm{1}}

\def\wh{\widehat}


%


\newcommand{\bbE}{\mathbb{E}}

\newcommand{\bbP}{\mathbb{P}}

\newcommand{\bbZ}{\mathbb{Z}}
\newcommand{\bbR}{\mathbb{R}}

\newcommand{\cA}{\mathcal A}

\newcommand{\cC}{\mathcal C}

\newcommand{\cQ}{\mathcal Q}

\newcommand{\diam}{\mathop {\rm diam}}

\setlength\parskip{3pt}


\title{Second Order Asymptotics for the Hard Wall Probability of the 2D Harmonic Crystal}

\author[1]{Maximilian Fels}
\author[2]{Oren Louidor}
\author[2]{Tianqi Wu} 

\affil[1]{University of Cologne, Germany}
\affil[2]{Technion, Israel}\date{}

\begin{document}

\maketitle

\begin{abstract}
We estimate the probability that the discrete Gaussian free field on a planar domain with Dirichlet boundary conditions stays positive in the bulk. 
Improving upon the result by Bolthausen, Deuschel and Giacomin from 2001~\cite{bolthausen2001entropic}, we derive the order of the subleading term of this probability when a sequence of discretized scale-ups of given domain and compactly included smooth bulk are considered. A main ingredient in the proof is the double exponential decay of the right tail of the centered minimum of the field in the bulk, conditioned on a certain weighted average of its values to be zero.
\end{abstract}

\section{Introduction}
\subsection{Setup and Results}

Consider non-empty, bounded, simply connected, open sets $V \subset W \subset \mathbb{R}^2$ with $V$ compactly contained in $W$. Define scaled-up lattice versions $W_N := N W \cap \mathbb{Z}^2, V_N := N V \cap \mathbb{Z}^2$ for $N \in \mathbb{N}$. Let $h := h^{W_N} = (h^{W_N}(x) :\: x \in \bbZ^2)$ be the discrete Gaussian free field (DGFF) on $W_N$ with zero boundary condition, with law given by
\begin{equation}
\label{e:1.1}
	\bbP(dh) \propto e^{-\frac{1}{2} \langle h, Lh\rangle} \displaystyle\prod_{x\in W_N} dh(x)  \displaystyle\prod_{x\notin W_N} \delta_0(dh(x))
\end{equation}
where $L$ denotes the positive definite Laplacian $Lh(x) := \frac{1}{4}\sum_{y\sim x} (h(x) - h(y))$, with $\langle\cdot, \cdot\rangle$ being the $l^2$ inner product and $y\sim x$ if $y$ is a neighbor of $x$. It is known~\cite{ZeitouniBrahmson} that the minimum of $h^{W_N}$ is tight around $-m_N$,  where
\begin{equation}\label{eq:center_max}
	m_N := \bbE \max_{x\in W_N} h = 2\sqrt{g} (\log N  - \tfrac{3}{8} \log \log N) + O(1), \quad g: = \frac{2}{\pi}.
\end{equation}

We are interested in the probability of the ``hard wall'' event
\begin{equation}
	\Omega_{V_N}^+ := \left\{\min_{x\in V_N} h \geq 0\right\},
\end{equation}
which is a special case of the ``right tail'' event for the minimum (with $u = m_N$) 
\begin{equation}
\Omega_{V_N}(u) := \left\{\min_{x\in V_N} h \geq -m_N + u\right\}.
\end{equation}
Let us first introduce the (discrete) relative \emph{capacity} of $V_N$ with respect to $W_N$
\begin{equation}\label{def_capacity}
\operatorname{Cap}^{W_N}(V_N) := \inf_{h\in \mathbb{R}^{\mathbb{Z}^2}} \left\{ \frac{1}{2} \langle h, Lh\rangle: h|_{V_N} \equiv 1, h|_{W_N^\complement} \equiv 0\right\} = \frac{1}{2} \langle \psi_N, L\psi_N\rangle,
\end{equation}
where the \emph{capacitor} $\psi_N$ is the unique function that is harmonic on $W_N \setminus V_N$, equals $1$ in $V_N$ and $0$ in $W_N^\complement$. In particular, this quantity stays $\Theta(1)$ as $N$ grows (the proof of this well-known fact is given in the beginning of Section \ref{s:proofs_preliminaries} for the sake of completion):
\begin{lemma}\label{capacity_bound} For all $N$ large enough, the discrete relative capacity $\operatorname{Cap}^{W_N}(V_N) \in [c, C]$ for some constants $c, C \in (0, \infty)$ only depending on $V, W$.
\end{lemma}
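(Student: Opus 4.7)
The plan is to prove the two bounds separately using the variational characterization together with a monotonicity argument for the lower bound and an explicit test function for the upper bound.

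\textbf{Upper bound.} Since $V$ is compactly contained in $W$ (both open and bounded), there exists an open set $V'$ with $V \subset\subset V' \subset\subset W$, and by Urysohn's lemma one can pick a smooth cut-off $\phi: \mathbb{R}^2 \to [0,1]$ with $\phi \equiv 1$ on a neighborhood of $V$, $\phi \equiv 0$ outside $V'$, and $\|\nabla \phi\|_\infty < \infty$. Define the admissible test function $\tilde\psi_N(x) := \phi(x/N)$; for $N$ large enough, $\tilde\psi_N \equiv 1$ on $V_N$ and $\tilde\psi_N \equiv 0$ on $W_N^\complement$. Then by the variational definition and the identity $\langle h, Lh\rangle = \tfrac{1}{4}\sum_{x \sim y}(h(x)-h(y))^2$, one gets
\[
	\operatorname{Cap}^{W_N}(V_N) \le \frac{1}{8}\sum_{x \sim y} \bigl(\phi(x/N) - \phi(y/N)\bigr)^2
	\le \frac{1}{8}\|\nabla \phi\|_\infty^2 \, \frac{|{\rm supp}(\phi)\cap \tfrac{1}{N}\mathbb{Z}^2|}{N^2} \cdot C,
\]
where the last factor is $O(1)$ by a Riemann-sum estimate. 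This yields $\operatorname{Cap}^{W_N}(V_N) \le C$.

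\textbf{Lower bound.} The key observation is the standard monotonicity of relative capacity: if $V_1 \subset V_2$ and $W_1 \supset W_2$, then every admissible function for $(V_2,W_2)$ is admissible for $(V_1,W_1)$, hence $\operatorname{Cap}^{W_1}(V_1)\le \operatorname{Cap}^{W_2}(V_2)$. Pick any $x_0 \in V$; since $V$ is open there is $\rho>0$ with $B(x_0,\rho)\subset V$, and since $W$ is bounded there is $R>\rho$ with $W\subset B(x_0,R)$. Discretizing and using monotonicity,
\[
	\operatorname{Cap}^{W_N}(V_N) \;\ge\; \operatorname{Cap}^{B(Nx_0,RN)\cap\mathbb{Z}^2}\bigl(B(Nx_0,\rho N)\cap\mathbb{Z}^2\bigr).
\]
The right-hand side is the relative capacity between two concentric discrete disks, which is classical: by a direct comparison with the continuum harmonic function $u(x)=\log(R/|x-x_0|)/\log(R/\rho)$ in the annulus (or by the well-known convergence of discrete to continuous Dirichlet energy on smooth domains), this capacity is bounded below by a positive constant depending only on $\rho$ and $R$, and hence only on $V$ and $W$.

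\textbf{Main obstacle.} Neither step is deep—the upper bound is a routine Riemann-sum computation and the lower bound reduces to the annular case via monotonicity. The only mild subtlety is verifying that the discrete concentric-disk capacity stays uniformly positive. I would handle this by testing the infimum $\frac{1}{8}\langle h, Lh\rangle$ against an \emph{arbitrary} admissible $h$: since $h$ drops from $1$ to $0$ along every lattice path from the inner disk to the outer boundary, a discrete Cauchy-Schwarz / one-dimensional path argument (analogous to the continuous computation $\int_\rho^R u'(r)^2 r\, dr \ge (\log(R/\rho))^{-1}$ after optimizing) gives a lower bound of order $1/\log(R/\rho)$, uniformly in $N$.
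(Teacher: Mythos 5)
Your proof is correct, but it takes a genuinely different, more elementary route than the paper. For the lower bound you both exploit monotonicity of relative capacity to reduce to concentric discrete disks of radii $\asymp N$; there the paper appeals to the probabilistic identity $2\operatorname{Cap}^{B_{out}}(B_{in})=\sum_{z\in\partial^{\rm in}B_{in}}\mathsf{P}^z(\tau_{\partial^{\rm out}B_{out}}<\tau^+_{B_{in}})$ combined with the $\Theta(1/N)$ escape-probability estimate from \cite[Prop.~6.4.1]{LawlerLimic2010}, whereas you invoke the convergence of discrete to continuum Dirichlet energy on an annulus (or equivalently the radial Cauchy--Schwarz argument you sketch). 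For the upper bound the approaches diverge more sharply: the paper again uses monotonicity, passing to an intermediate $C^2$ domain $U_N$ and then invoking the escape-probability Lemma~\ref{lemma:escape_probability_general} to bound each boundary term by $O(1/N)$; you instead plug an explicit rescaled smooth cut-off $\phi(\cdot/N)$ into the variational characterization and bound its Dirichlet energy by a Riemann sum, which is self-contained and avoids any dependence on the auxiliary escape-probability machinery. Your argument is cleaner in that respect, at the cost of leaving the discrete annulus lower bound a little sketchy (your single-path phrasing, ``$h$ drops from $1$ to $0$ along every lattice path,'' would by itself only give a bound degrading with the path length $\asymp N$; the correct discrete analogue needs the radial/angular decomposition or the cited Dirichlet-energy convergence, as you acknowledge). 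Also note a small point: you need $\phi\equiv 1$ on a neighborhood of $\overline V$, not merely of $V$, so that $\phi(x/N)=1$ for all $x\in V_N$, and $\overline{V'}\subset W$ so that $\phi(x/N)=0$ for $x\notin W_N$; both are easy to arrange since $V\subset\subset W$.
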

\noindent

Our main results are the following upper and lower bounds for $\bbP(\Omega_{V_N}(u))$. 
\begin{theorem}\label{main_result} 
Assume $\partial V$ is $C^2$. There exist absolute constants $C,c \in (0,\infty)$ such that for all $N \geq 1$ and $u_0(V, W) \leq u\leq e^{\sqrt{\log N}}$,
	\begin{equation}\label{eq:DGFF_main_result}
		-\log \mathbb{P}(\Omega_{V_N}(u) ) =  \operatorname{Cap}^{W_N}(V_N) (u -  \theta_u \log u)^2 \,,
	\end{equation}
	where $\theta_u \in [c, C]$.
\end{theorem}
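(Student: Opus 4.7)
The plan is to perform a Gaussian orthogonal decomposition along the capacitor direction $\psi_N$, which recasts $\mathbb{P}(\Omega_{V_N}(u))$ as a one-dimensional convolution of a Gaussian tail against the law of a centered minimum under a linear conditioning; sharp asymptotics then follow from a Laplace-type analysis once the distribution of that conditioned minimum is understood. Set $Z := \langle \psi_N, L h\rangle$. Since $L^{-1}$ is the covariance of $h$ on $W_N$, a short computation gives $\operatorname{Var}(Z) = \psi_N^\top L\psi_N = 2\operatorname{Cap}^{W_N}(V_N)$ and $\mathbb{E}[h(x)Z] = \psi_N(x)$ for every $x \in W_N$. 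Setting $h^\perp := h - Z\psi_N/(2\operatorname{Cap}^{W_N}(V_N))$, the field $h^\perp$ is independent of $Z$ and equidistributed with $h$ conditioned on $\{Z=0\}$. Because $\psi_N \equiv 1$ on $V_N$,
\[
\min_{V_N} h \;=\; \min_{V_N} h^\perp + \frac{Z}{2\operatorname{Cap}^{W_N}(V_N)},
\]
so, writing $\tilde M := \min_{V_N} h^\perp + m_N$ and using independence of $Z$ and $h^\perp$,
\[
\mathbb{P}(\Omega_{V_N}(u)) \;=\; \mathbb{E}\Bigl[\,\overline\Phi\bigl(\sqrt{2\operatorname{Cap}^{W_N}(V_N)}\,(u - \tilde M)\bigr)\Bigr],
\]
where $\overline\Phi$ is the standard Gaussian tail.

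The key technical input, announced in the abstract, is two-sided double-exponential control of the right tail of $\tilde M$,
\[
\exp\bigl(-C_1\, e^{2v/\sqrt g}\bigr) \;\le\; \mathbb{P}(\tilde M \ge v) \;\le\; \exp\bigl(-c_1\, e^{2v/\sqrt g}\bigr),\qquad v_0 \le v \le C_2 \sqrt{\log N},
\]
together with tightness of $\tilde M$ around $0$. Granted this, the displayed expectation is, up to polynomial prefactors, the Laplace integral $\int \exp(-\operatorname{Cap}^{W_N}(V_N)(u-v)_+^2 - c\,e^{2v/\sqrt g})\,\mathrm{d}v$, whose first-order condition $2\operatorname{Cap}^{W_N}(V_N)(u-v) = (2c/\sqrt g)\,e^{2v/\sqrt g}$ is uniquely solved by $v^* = (\sqrt g/2)\log u + O(1)$ for large $u$ and by $v^* = O(1)$ for $u$ in the bounded range above $u_0$. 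Substituting back yields
\[
-\log \mathbb{P}(\Omega_{V_N}(u)) \;=\; \operatorname{Cap}^{W_N}(V_N)\,(u - \theta_u \log u)^2
\]
with $\theta_u \to \sqrt g/2$ as $u\to\infty$ and $\theta_u = \Theta(1)$ for bounded $u$, hence $\theta_u \in [c, C]$ uniformly. Concretely, the upper bound on $\mathbb{P}(\Omega_{V_N}(u))$ uses the upper tail bound on $\tilde M$ combined with $\overline\Phi(x) \le e^{-x^2/2}$ (after an integration by parts turning the expectation into $\int \mathbb{P}(\tilde M > v)\,\sqrt{2\operatorname{Cap}^{W_N}(V_N)}\,\phi(\sqrt{2\operatorname{Cap}^{W_N}(V_N)}(u-v))\,\mathrm{d}v$), whereas the matching lower bound is obtained by restricting the expectation to the event $\{\tilde M \ge v^*\}$ and applying the lower tail bound on $\tilde M$ together with the standard Gaussian lower bound $\overline\Phi(x) \ge c x^{-1} e^{-x^2/2}$ to the factor $\overline\Phi(\sqrt{2\operatorname{Cap}^{W_N}(V_N)}(u-v^*))$.

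The principal obstacle is the key lemma itself: sharp double-exponential decay of the centered minimum \emph{under the linear conditioning} $\langle \psi_N, L h\rangle = 0$. This conditioning pins a weighted spatial average of the field on $\partial V_N$ and thereby modifies the coarse-scale component of $h$ responsible for the Gumbel-type tail of the unconditioned minimum. Extending the multiscale / modified branching random walk analyses of the 2D DGFF extrema to this constrained setting while preserving the sharp rate $e^{2v/\sqrt g}$ — in particular producing the lower bound on $\mathbb{P}(\tilde M \ge v)$, which requires constructing sufficiently many near-minimal points of $h^\perp$ in $V_N$ — is the main technical work of the paper.
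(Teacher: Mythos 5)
Your proposal follows essentially the same route as the paper: orthogonal decomposition of $h$ along the capacitor direction $\psi_N$ in the Dirichlet inner product, reduction of $\bbP(\Omega_{V_N}(u))$ to a one-dimensional Gaussian convolution against the tail of the conditioned centered minimum, and a Laplace-type optimization over the lift variable $v$, with the double-exponential tail control (Theorem~\ref{main_tail_bound}) as the key input. One caveat: you posit matching exponential rates $e^{2v/\sqrt g}$ on both sides of the tail bound (hence $\theta_u \to \sqrt g/2$), whereas Theorem~\ref{main_tail_bound} only gives non-matching rate constants ($\beta \le \sqrt{2\pi}/20$ for the upper bound versus $c > \sqrt{2\pi}$ for the lower), which is precisely why the paper only concludes $\theta_u \in [c,C]$ without identifying its limit; your reduction is nonetheless valid with the weaker two-sided bound.
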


Specializing to the hard-wall event, we get,
\begin{corollary}
\label{c:1}
For all $N \geq 1$, $u \geq  u_0(V, W)$. 
	\begin{equation}
		-\log \mathbb{P}(\Omega_{V_N}^+ ) = 
		\operatorname{Cap}^{W_N}(V_N) \big(4g (\log N)^2 - \theta_N \log N \log \log N \big) \,,
	\end{equation}
where $\theta_N \in [c, C]$ for absolute constants $3\sqrt{g}/2 < c < C < \infty$.
\end{corollary}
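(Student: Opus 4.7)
The plan is to derive this corollary by direct substitution of $u = m_N$ into Theorem~\ref{main_result}, followed by a straightforward algebraic expansion. Note that $\Omega_{V_N}^+ = \Omega_{V_N}(m_N)$, so the corollary is really just the hard-wall specialization. Before applying the theorem I would verify the range $u_0(V, W) \leq m_N \leq e^{\sqrt{\log N}}$: the lower bound holds for all $N$ larger than some $N_0 = N_0(V, W)$ since $m_N \to \infty$, and the upper bound holds for $N$ large because $m_N = O(\log N) \ll e^{\sqrt{\log N}}$. For the finitely many remaining small $N$ the inequality can be arranged by adjusting the constants $c, C$, since $-\log \mathbb{P}(\Omega_{V_N}^+)$ is trapped in a bounded positive interval there.

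Applying Theorem~\ref{main_result} with $u = m_N$ gives
$$-\log \mathbb{P}(\Omega_{V_N}^+) = \operatorname{Cap}^{W_N}(V_N)\bigl(m_N - \theta_{m_N} \log m_N\bigr)^2$$
with $\theta_{m_N} \in [c_0, C_0]$ for absolute constants $0 < c_0 \le C_0 < \infty$. I would then substitute the expansion $m_N = 2\sqrt{g}\log N - \tfrac{3\sqrt{g}}{4}\log\log N + O(1)$ from \eqref{eq:center_max}. Since $m_N = \Theta(\log N)$ one has $\log m_N = \log\log N + \log(2\sqrt{g}) + O(\tfrac{\log\log N}{\log N})$, and the boundedness of $\theta_{m_N}$ then yields $\theta_{m_N}\log m_N = \theta_{m_N}\log\log N + O(1)$. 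Subtracting and squaring, and using $(\log\log N)^2 = O(\log N)$, gives
$$\bigl(m_N - \theta_{m_N}\log m_N\bigr)^2 = 4g(\log N)^2 - \bigl(3g + 4\sqrt{g}\,\theta_{m_N}\bigr)\log N \log\log N + O(\log N).$$

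Finally I would identify $\theta_N$ by matching the second-order coefficient after dividing by $\operatorname{Cap}^{W_N}(V_N)$, which stays in a compact positive interval by Lemma~\ref{capacity_bound}. The residual $O(\log N)$ error translates into an $O(1/\log\log N)$ perturbation of $\theta_N$, giving
$$\theta_N = 3g + 4\sqrt{g}\,\theta_{m_N} + O(1/\log\log N).$$
The upper bound $\theta_N \le C < \infty$ is immediate from $\theta_{m_N} \le C_0$. For the lower bound, $\theta_N \ge 3g + 4\sqrt{g}c_0 - o(1)$, so it suffices that $3g > 3\sqrt{g}/2$; this is the numerical fact $3g - \tfrac{3\sqrt{g}}{2} = 3\sqrt{g}\bigl(\sqrt{g} - \tfrac{1}{2}\bigr) = 3\sqrt{g}\bigl(\sqrt{2/\pi} - \tfrac{1}{2}\bigr) > 0$. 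I do not foresee a serious obstacle here: this corollary is an algebraic reduction from Theorem~\ref{main_result}, and the only point requiring a bit of care is bookkeeping of the lower-order error terms so that they remain absorbed in the $O(1/\log\log N)$ correction to $\theta_N$ (and thus do not spoil either the lower or upper bound on $\theta_N$).
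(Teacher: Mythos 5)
Your derivation is correct and is essentially the only reasonable route: substitute $u=m_N$ into Theorem~\ref{main_result}, expand using $m_N = 2\sqrt{g}\log N - \tfrac{3\sqrt{g}}{4}\log\log N + O(1)$ and $\log m_N = \log\log N + O(1)$, and read off $\theta_N = 3g + 4\sqrt{g}\,\theta_{m_N} + O(1/\log\log N)$, which exceeds $3\sqrt{g}/2$ because $3g - \tfrac{3\sqrt{g}}{2} = 3\sqrt{g}(\sqrt{g}-\tfrac12) > 0$. The paper gives no separate proof of this corollary, treating it as an immediate consequence of Theorem~\ref{main_result}, so your argument matches the implicit one; your bookkeeping of the error terms and the verification that $m_N$ falls in the admissible range $[u_0, e^{\sqrt{\log N}}]$ are exactly the points one must (and you do) check.
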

The is as an improvement over the leading order asymptotics for the probability of the hard-wall event, which were derived in the seminal paper of Bolthausen, Deuschel and Giacomin~\cite{bolthausen2001entropic} for the case of $W = [0,1]^2$
(see the discussion in the end of this section).

\subsection{Top-level proof}
We begin by noticing that in the \emph{Dirichlet inner product} associated with $L$, the field $h$ has \emph{identity covariance}, so orthogonal decomposition in this inner product yields \emph{independence}. In particular, orthogonally projecting the field $h$ along the capacitor $\psi_N$ in the Dirichlet inner product gives that
\begin{equation}\label{eq:decomposition_in_Dirichlet}
	h - \frac{\langle h, L \psi_N\rangle}{\langle\psi_N, L \psi_N \rangle} \psi_N =: h - \sigma Z_h \psi_N
\end{equation}
is independent from
\begin{equation}\label{def_Z_h}
	Z_h := \sigma \langle h, L \psi_N\rangle \sim \mathcal{N}(0, 1)
\end{equation}
and has the same law as $h|Z_h = 0$, where 
\begin{equation}
	\sigma^{-2} := \var(\langle h, L \psi_N \rangle) = \langle \psi_N, L \psi_N \rangle = 2 \operatorname{Cap}^{W_N}(V_N).
\end{equation}
Since $h - \sigma Z_h \psi_N = h - \sigma Z_h$ on $V_N$, we can thus write
\begin{align}
\label{e:1.11}
	\mathbb{P}(\Omega_{V_N}(u) ) &= \mathbb{P}(\min_{x\in V_N} (h - \sigma Z_h) \oplus \sigma Z_h \geq -m_N + u) \\
	&= \int  \mathbb{P}(\sigma Z_h \in u -dv) \times \mathbb{P}(\min_{x\in V_N} h \geq -m_N + v| Z_h = 0) \\
	&=  \frac{1}{\sqrt{2\pi \sigma^2} }\int e^{- \frac{(u-v)^2}{2\sigma^2} } \mathbb{P}\left(\Omega_{V_N}(v) | Z_h = 0\right) dv \label{eq:DGFF_main_decomposition}
\end{align}
\noindent Above we used $\oplus$ to denote the sum of two independent quantities.

It remains to bound the tail probability in \eqref{eq:DGFF_main_decomposition}. To this end, we derive the following upper and lower bounds, which can be seen as the additional contribution of this work.
\begin{theorem}\label{main_tail_bound} Let $\beta \leq \sqrt{2\pi}/20$ and $N\in \mathbb{N}$. There exists $C = C(V, W) \in (0, \infty)$ such that for all $0\leq u\leq (\log N)^{2/3}$, 
	\begin{equation}\label{eq:main_tail_bound_upper}
		\bbP\left(\min_{x\in V_N} h(x) \geq -m_N + u \Big| \langle h, L \psi_N \rangle = 0\right) \leq 4 e^{- e^{\beta u}/C}. 
	\end{equation}
	For any $c > \sqrt{2\pi}$, there exists $C = C(V, c) \in (0, \infty)$ such that for all $1\leq u\leq \sqrt{\log N}$,
	\begin{equation}\label{eq:main_tail_bound_lower}
		\bbP\left(\min_{x\in V_N} h(x) \geq -m_N + u \Big| \langle h, L \psi_N \rangle = 0\right) \geq e^{-C e^{c u}}. 
	\end{equation}
\end{theorem}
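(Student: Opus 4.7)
The plan is to reduce both bounds to unconditional statements about the projected Gaussian field $\tilde h := h - \sigma Z_h \psi_N$, which by the orthogonal decomposition in \eqref{eq:decomposition_in_Dirichlet} is independent of $Z_h$ and has the same law as $h$ conditioned on $Z_h = 0$. On $V_N$, where $\psi_N \equiv 1$, the field $\tilde h$ is centered Gaussian with covariance $\Cov(\tilde h(x), \tilde h(y)) = G^{W_N}(x,y) - \sigma^2$, so it is a log-correlated Gaussian field whose local structure agrees with that of the standard DGFF on $V_N$ up to a bounded additive term and a harmonic perturbation near $\partial V_N$. Both assertions then reduce to two-sided control of $\bbP(\min_{V_N} \tilde h \geq -m_N + u)$.

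For the upper bound \eqref{eq:main_tail_bound_upper}, I would use the symmetry $\tilde h \laweq -\tilde h$ to rewrite the probability as $\bbP(\max_{V_N}(-\tilde h) \leq m_N - u)$, a left-tail bound for the maximum of a log-correlated Gaussian field. The main input is the double-exponential left tail for the 2D DGFF maximum, of the form $\bbP(\max \leq m_N - u) \leq e^{-e^{\beta u}/C}$ uniformly in $N$, as established in the line of work initiated by Bramson--Ding--Zeitouni \cite{ZeitouniBrahmson}. I would adapt this estimate to $\tilde h$ via a multi-scale orthogonal decomposition of the field into approximately independent scale-wise increments of variance $\asymp g \log 2$ (e.g., via concentric circle averages), under which the event $\max \leq m_N - u$ translates into a cascade of low-probability constraints of branching random walk type, whose joint probability is double-exponentially small. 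The $O(1)$ covariance shift from $\sigma^2$ and the harmonic perturbation contribute only bounded additive corrections per scale and can be absorbed into the constants; the slack between $\beta \leq \sqrt{2\pi}/20$ and the optimal rate $\sqrt{2\pi}$ leaves comfortable room for these adaptations and the losses incurred in making the estimate uniform in $N$.

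For the lower bound \eqref{eq:main_tail_bound_lower}, I would exhibit an explicit push-up event via pinning. Choose a scale $r$ such that $2\sqrt{g}\log(N/r) \asymp u$ and a lattice $\Lambda_r \subset V_N$ of $K \asymp (N/r)^2 \asymp e^{\Theta(u)}$ well-separated points. Consider the event $\EE := \{\tilde h(x) \geq D \text{ for all } x \in \Lambda_r\}$ for a suitable constant $D$. Under $\EE$, by Gibbs--Markov the field on each induced $r$-box splits as the harmonic extension from pinned values (which is bounded below by about $D$ in the bulk of the box) plus an independent DGFF on the $r$-box; arranging $-m_r + D \geq -m_N + u$ up to a fluctuation with single-exponential left tail in $D$, the event $\EE$ forces $\min_{V_N} \tilde h \geq -m_N + u$ with uniformly positive conditional probability. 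The probability of $\EE$ itself is bounded below, via Royen's Gaussian correlation inequality, by $\prod_{x \in \Lambda_r} \bbP(\tilde h(x) \geq D) \geq q^K$ for some $q \in (0,1)$; with $K \asymp e^{\Theta(u)}$ this yields the claimed $e^{-C e^{cu}}$ bound for any $c > \sqrt{2\pi}$ upon absorbing losses into the constant $C$.

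The main obstacle in both directions is the careful tracking of the harmonic perturbation distinguishing $\tilde h$ from a standard DGFF on $V_N$: near $\partial V_N$ the covariance picks up a non-negligible bias that must be propagated through the multi-scale estimates (upper bound) and through the pinning calculation (lower bound). A secondary difficulty is establishing these bounds uniformly in $N$ for $u$ growing as a fractional power of $\log N$, beyond the limiting regime $u = O(1)$ in which convergence in law is classical; this demands scale-by-scale quantitative control that holds pre-limit and uniformly in $N$, which I expect to be the technical heart of the argument.
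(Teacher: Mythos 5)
Your reduction to the field $\tilde h := h-\sigma Z_h\psi_N$ and the observation that on $V_N$ its covariance is $G_{W_N}(x,y)-\sigma^2$ are both correct, and your high-level outline (double-exponential left tail of the min for the upper bound; a push-up event at $e^{\Theta(u)}$ scales for the lower bound) is directionally aligned with the paper. However, both halves of your proposal have genuine gaps at precisely the points where the conditioning on $Z_h=0$ has to be confronted.

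For the upper bound, the phrase ``the $O(1)$ covariance shift \ldots contributes only bounded additive corrections per scale'' glosses over the central difficulty. The paper does not adapt a BDZ-type multiscale cascade to $\tilde h$; it instead performs the Gibbs--Markov split $h=\phi^{W_N,V_N^-}\oplus h^{V_N^-}$ and exploits the structural fact that $Z_h$ is a measurable function of $\phi^{W_N,V_N^-}$ alone (because $\partial^{\text{in}}V_N\subset(V_N^-)^\complement$). This kills the conditioning cleanly: $(h\mid Z_h=0)=\Delta\oplus h'$ with $h':=h^{V_N^-}$ an \emph{unconditioned} DGFF. The whole technical burden then sits in $\Delta=(\phi\mid Z_h=0)$, which pointwise has variance $\Theta(u)$ at distance $Ne^{-cu}$ from $\partial V_N$ --- not an $O(1)$ perturbation --- yet a carefully chosen boundary-weighted average of $\Delta$ has variance $O(e^{-cu})$ (Lemma~\ref{variance_average_Delta}). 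That cancellation is where the conditioning is actually used, and it is what produces the exponentially many well-separated points with $\Delta\le 3$ (Proposition~\ref{SS-dilution}), after which Ding's box argument for $h'$ finishes. Your plan has no analogue of this step and does not say how to make a BRW cascade argument compatible with a global linear constraint on the field.

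For the lower bound there are two concrete problems. First, the claimed Gibbs--Markov decomposition is wrong: conditioning $\tilde h$ on its values at a sparse set $\Lambda_r$ of $e^{\Theta(u)}$ isolated lattice points does \emph{not} decouple the residual into independent DGFFs on $r$-boxes; the residual is a single DGFF on the connected domain $V_N\setminus\Lambda_r$, which still carries long-range log correlations across ``cells'' because the removed points do not disconnect the domain. The paper instead conditions on the binding field $\varphi^{(x)}$ from \emph{outside} each enlarged box $Q_{r'}(x)$, which genuinely produces independent inner DGFFs $h^{(x)}$ on the boxes and hence a clean union bound (Lemma~\ref{lem:min_of_h_x}). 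Second, the correlation inequality step is not available as stated: Royen's Gaussian correlation inequality applies to symmetric convex sets, not to the one-sided events $\{\tilde h(x)\ge D\}$; an FKG-type product lower bound would instead require $\mathrm{Cov}(\tilde h(x),\tilde h(y))=G_{W_N}(x,y)-\sigma^2\ge 0$ for all $x,y\in V_N$, which you have not verified and which can fail for distant $x,y$ where $G_{W_N}(x,y)=O(1)$. The paper avoids correlation inequalities altogether: it bounds $\wh\bbP(\vec Y\in B)$ for the vector of binding-field values at box centers by a direct whitening and trace/chi-square estimate (Lemma~\ref{lower_bound_main}), together with Lemmas~\ref{remove_conditioning_trick} and~\ref{conditional_expectation_of_Gaussian} to transfer the overwhelming-probability events across the conditioning. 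Until you repair the decomposition and replace the correlation inequality with an argument that actually applies, the lower bound does not go through.
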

\noindent
The proof of Theorem~\ref{main_tail_bound} constitutes the main effort in this work and occupies the remaining sections of this manuscript.

Let us now see how Theorem~\ref{main_tail_bound} can be used to derive the desired upper and lower bounds for the hard wall probability.
\begin{proof}[Proof of Theorem \ref{main_result}] Let $v_{\max} := (\log N)^{2/3}$. Plug the upper bound \eqref{eq:main_tail_bound_upper} into \eqref{eq:DGFF_main_decomposition}, this is at most
	\begin{align}
		& \frac{1}{\sqrt{2\pi \sigma^2} } \int_0^\infty 4 e^{\sigma^{-2} uv-\frac{v^2}{2\sigma^2}-e^{\beta (v \wedge v_{\max})}/C} \\ 
		&\lesssim \int_0^{v_{\max}} e^{\sigma^{-2} uv -e^{\beta v}/C} dv + e^{ \frac{u^2}{2\sigma^{2}}-e^{\beta v_{\max}}/C} \int_{v_{\max}}^{\infty} e^{-\frac{(v-u)^2}{2\sigma^{2}}} dv.
	\end{align}
	The integrand of the first term is maximized at $v^* = \beta^{-1} \log (\beta^{-1} C \sigma^{-2} u)$ and has exponential tail to its left and double exponential tail to its right, so the first integral is at most of order 
	\begin{equation}
	e^{\sigma^{-2} uv^*} \leq e^{\sigma^{-2} \beta^{-1} u \log u + C' u}.
	\end{equation}
	On the other hand, the second term is at most $e^{ \frac{u^2}{2\sigma^{2}}-e^{\beta (\log N)^{2/3}}/C}$, which is $o(1)$ for $u \leq e^{\beta/3\cdot (\log N)^{2/3}}$. This establishes the upper bound. For the lower bound, plug the lower bound \eqref{eq:main_tail_bound_lower} into \eqref{eq:DGFF_main_decomposition}, this is at least
	\begin{align}
		\frac{1}{\sqrt{2\pi \sigma^2}} \int_{1}^{\sqrt{\log N}} e^{\sigma^{-2} uv-\frac{v^2}{2\sigma^2}} \cdot e^{-Ce^{c v}} dv \geq e^{\sigma^{-2} u \cdot c^{-1} \log u - C'u}
	\end{align}
	where we restricted the integral to a window of length $1$ around $v =  c^{-1} \log u$. 
\end{proof}

\subsection{Discussion}
The DGFF is a canonical example of a Ginzburg-Landau (GL) Field, which is used, among other things, to model the height of an interface between two phases in a thermodynamical system. In this context, the event $\Omega_{V_N}^+$ represents an impenetrable ``wall' which is placed at height zero above the bulk of the
underlying domain of the system and thus prevents the interface from reaching below it. A well known consequence of the presence of such obstacle, which can already be seen in one dimension (the random walk case), is the repulsion of the field away from the wall, in order to make room for its typical entropic fluctuations. 

The arguably most ``physical'' case, which is also the one least mathematically complete, is when the underlying domain is of dimension $2$. This setup was studied in the mathematical literature in~\cite{bolthausen2001entropic, Daviaud} for the case of the DGFF and in~\cite{deuschel2000entropic} for more general GL Fields. In order to derive leading order asymptotics for $\bbP(\Omega_{V_N}^+)$, the authors in~\cite{bolthausen2001entropic} show that the typical height of the unconstrained minimum of the field on $V_N$ is $2\sqrt{g} \log N$ on first order. They then argue that for the field $h^{W_N}$ to be positive on $V_N$, it must ``lift'' on average to that level on $V_N$ so that ``typical'' minima are now positive. Computing the probabilistic ``cost'' of this lift gives the first order in~\eqref{eq:DGFF_main_result} by minimizing the exponent in~\eqref{e:1.1}. 

In this work we improve upon this leading order asymptotics by showing that the second order term of $\bbP(\Omega_{V_N}^+)$ is a negative bounded multiple of $(\log N)\log \log N$. We take advantage of the substantial progress made since~\cite{bolthausen2001entropic} in the study of the minimum of the field, which includes the derivation of its typical height up to an $O(1)$ additive term~\cite{ZeitouniBrahmson} and its weak convergence after centering~\cite{BDingZ}. 

The key idea here is to decompose the field as an independent sum of its orthogonal projection (w.r.t. the Dirichlet inner product) onto the (subspace spanned by the) capacitor $\psi_n$ and the remainder orthogonal projection on the complement space. The former, $\sigma Z_h \psi_N$, is a one-dimensional object (i.e. a one dimensional Gaussian-Hilbert space), which is a constant field on $V_N$ with height distributed as a centered Gaussian with variance $\sigma^2 = (2 \operatorname{Cap}^{W_N}(V_N))^{-1}$. Having this component reach level $u(1+o(1))$ on $V_N$ gives the leading order term for $\bbP(\Omega_{V_N}(u))$.

The second component in the orthogonal decomposition of the field has the law of $h$ conditioned on $\langle h, L \psi_N \rangle = 0$. Since the capacitor $\psi_N$ is harmonic in $W_N$ except on $\partial^{\text in} V_N$ - the inner boundary of $V_N$, it can be easily shown that the last inner product is, up to normalization by (twice) the capacity, a weighted average of $h$ over $\partial^{\text in} V_N$. The weights are given by the normalized escape probabilities of a random walk from the inner boundary of $V_N$ to $W^\complement_N$ and are $\Theta(1/N)$ whenever the boundary of $V$ is smooth. Thus $h|\langle h, L \psi_N \rangle = 0$ has roughly the law of a DGFF on $W_N$ conditioned to have average zero on $\partial^{\text in} V_N$.

As the first component in the decomposition is simply Gaussian, controlling the right tail of the minimum amounts to controlling the right tail of $h|\langle h, L \psi_N \rangle = 0$. This is the focus of Theorem~\ref{main_tail_bound} where it is shown that the decay of this tail is double exponential. This form of decay is the same as that of the right tail of the minimum of a DGFF on $V_N$, namely when the values of $h^{W_N}$ are conditioned to be exactly zero on $\partial^{\text in} V_N$. The double exponential decay of the right tail of the unconstrained minimum of the DGFF was shown by Ding in~\cite{Ding2013}, and we   indeed use this result as part of the proof of the upper bound in Theorem~\ref{main_tail_bound}.

As the integral in~\eqref{eq:DGFF_main_decomposition} is maximized at $v^* = \Theta(\log u)$, the proof of Theorem~\ref{main_result} can easily be used to show that conditional on $\Omega_{V_N}(u)$, the projection of $h$ onto the capacitor $\sigma Z_h \psi_N$ is $u - \Theta(\log u)$ on $V_N$, with complementary probability decaying at lest exponentially in $u$. Nevertheless, there does not seem to be an easy way to turn this into a more explicit statement about the lift of the field under the conditioning, involving, e.g. the conditional mean of the field at a vertex of $V_N$ or the empirical average of the field on $V_N$. 

In particular, the fact that $\sigma Z_h \psi_N$ is $2\sqrt{g}\log N - \Theta(\log \log N) $ on $V_N$ under $\Omega_{V_N}^+$ with overwhelmingly high probability, seems to indicate that the conditional mean of $h$ under the same event is $2\sqrt{g}\log N - \Theta(\log \log N)\ll 2\sqrt{g}\log N$ as well. This would have been an improvement over the known leading order asymptotics of this conditional mean, which were indeed shown in~\cite{bolthausen2001entropic} to be $2\sqrt{g} \log N(1+o(1))$. However, our methods fall short of doing as much.

Lastly, we remark that in the hierarchal version of this problem, namely for the DGFF on a regular tree, sharp asymptotics (up-to $o(n)$, where $n$ should be compared to $\log N$ in this work) for the probability of the hard wall event was shown recently in a sequence of works by two of the present authors together with Hartung~\cite{BRWHW1, BRWHW2}. These works also include a derivation of sharp asymptotics for the repulsion level under the positivity constraint, as well as a full asymptotic description of the law of the conditional field. In particular, the typical lift is smaller than the typical height of the minimum by a diverging quantity ($C\log n + O(1)$) and the law of the recentered conditional field is not asymptotically the same as that of the unconditional field (but the laws of the gradients do match locally in the limit). We conjecture that this is the same in the present case of the plane.

\subsection*{Organization of the paper}
The rest of the paper is organized as follows. In Section~\ref{s:preliminaries} we introduce additional notation and state some auxiliary lemmas to be used in the remainder of this work. Section~\ref{s:u} is devoted to proving the upper bound in Theorem~\ref{main_tail_bound}. The lower bound is treated in Section~\ref{s:l}. Lastly, proofs of the auxiliary lemmas are given in~\ref{s:a}. 

\section{Preliminaries}\label{s:preliminaries}
\subsection{Notation}
Let us collect the general notation used in this work. For general Gaussian processes, we denote the law by $\mathbb{P}$ and the expectation by $\mathbb{E}$. On the other hand $\mathsf{P}^x$ (resp.\ $\mathsf{P}^\mu$) and $\mathsf{E}^x$ (resp.\ $\mathsf{E}^\mu$) will be used for the law of a discrete time random walk $(S_t)_{t\in \mathbb{N}}$ on $\mathbb{Z}^2$ starting from $x\in \mathbb{Z}^2$ (resp.\ initial distribution $\mu$) and the associated expectation.
We shall $\oplus$ to indicate the sum of independent random variables. 

Given a subset $A \subset \mathbb{Z}^2$, we let $\tau_A := \inf\{t \geq 0 : S_t \in A\}$ be the first hitting time of $A$, and $\tau_A^+ := \inf\{t \geq 1 : S_t \in A\}$ be the first return time. We denote the harmonic measure from $x$ in $A$ by $\Pi_A(x, \cdot) = \mathsf{P}^x(S_{\tau_{A^\complement}} \in \cdot)$.
The Green Function on $W \subset \mathbb{Z}^2$, is given by
\begin{equation}
	G_W(x, y) := \mathsf{E}^x \left[\sum_{t=0}^{\tau_{W^\complement}-1} \mathbf{1}_{\{S_t = y\}} \right], \quad x, y \in \mathbb{Z}^2.
\end{equation}
For a set $A \subset \mathbb{Z}^2$, we extend this definition to $G_W(x, A)$ by replacing the indicator $\mathbf{1}_{\{S_t = y\}}$ with $\mathbf{1}_{\{S_t \in A\}}$ in the formula above.
The process $h^W = (h^W(x) :\: x \in \bbZ^2)$ will always denote the Discrete Gaussian Free Field (DGFF) on $W$ with zero boundary condition outside (i.e.~$h(x) \equiv 0$ for $x\notin W$), which makes it a Gaussian process with mean zero and covariance given by $G_W$.

The diameter of a set, distance from a point to a set and distance between two sets will be defined in the usual sense, using the Euclidean Norm, which we denote by $\diam$ and $\dist$.
For a discrete set $\Lambda \subset \mathbb{Z}^2$, we write $|\Lambda|$ (or $\# \Lambda$) for its cardinality. The inner/outer boundaries and interior of $\Lambda$ are given by
\begin{align}
	\partial^{\text{in}} \Lambda &:= \{ x \in \Lambda : \exists y \in \mathbb{Z}^2 \setminus \Lambda, |x-y|_1 = 1 \}, \\
	\partial^{\text{out}} \Lambda &:= \{ y \in \mathbb{Z}^2 \setminus \Lambda : \exists x \in \Lambda, |x-y|_1 = 1 \}, \\
	\Lambda^- &:= \Lambda \setminus \partial^{\text{in}} \Lambda. \label{eq:kappa}
\end{align}
The discrete ball (resp.~box) of radius (resp.~side-length) $r$ centered at $x$ will be denoted by
\begin{equation}\label{notation_discrete_ball}
	\cC_r(x) := \{y\in \mathbb{Z}^2: \|y-x\|_2 < r\}
\end{equation}
and
\begin{equation}\label{notation_discrete_box}
			Q_r(x) := \{y\in \mathbb{Z}^2: \|y-x\|_\infty \leq r/2\}.
\end{equation}

For an open set $U \subset \mathbb{R}^2$ in the continuum, we use $\partial U$ to denote its topological boundary. We define the $\delta$-interior of $U$ as
	\begin{equation}\label{def_delta_interior}
			U^{\delta} := \{x\in U: \dist(x, \partial U) > \delta\}.
		\end{equation}
If $\partial U$ is a compact, $C^2$-regular planar curve, the signed curvature $\kappa(x)$ is well-defined, and we let $\kappa_{\max}:= \max_{x\in \partial U} |\kappa(x)|$. By \cite[Lemma 14.16]{GilbargTrudinger2001} and the implicit function theorem, observing that  $\partial U^\delta$ can be written as a level set of the distance function with respect to $\partial U$, the set $\partial U^\delta$ remains $C^2$-regular provided $\delta < \kappa_{\max}^{-1}$, and its curvature bounded by $\kappa_{U^{\delta}}=\frac{\kappa_{\max} }{1-\delta \kappa_{\max} }$.

Finally, $a\lesssim b$ indicates that $a\leq C b$ for some absolute constant $C>0$. We write $a \asymp b$, or equivalently $\Theta(a)=b$, if both $a\lesssim b$ and $b\lesssim a$. The notation $a\ll b$ implies that the ratio $a/b$ is sufficiently small.

\subsection{Auxiliary lemmas}
Now we state some auxiliary lemmas used in our work. The proofs of these  standard and/or straight-forward results are deferred to Section \ref{s:proofs_preliminaries}. 

\begin{lemma}\label{remove_conditioning_trick} If $\vec X, \vec Y$ are jointly Gaussian vectors, then for any $\vec x, \vec y$
	\begin{equation}
		\bbP\left(\min_{i} (X_i - x_i) \leq 0|\vec Y = \vec y\right) \leq \frac{	\bbP\left(\displaystyle\min_{i} (X_i - x_i) \leq 0 \right)}{\displaystyle\min_{i} \bbP\left(\bbE[X_i|\vec Y] \leq \bbE[X_i|\vec Y = \vec y]\right)}.
	\end{equation}
	In particular, if $\vec Y$ is centered, we have
	\begin{equation}\label{conditioning_zero_trick}
		\bbP\left(\displaystyle\min_{i} (X_i-x_i) \leq 0\right) \geq \frac{1}{2}\bbP\left(\min_{i} (X_i-x_i) \leq 0|\vec Y = \vec 0\right) .
	\end{equation}
\end{lemma}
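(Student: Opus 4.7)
The plan is to exploit the Gaussian decomposition $X_i = \mu_i(\vec Y) + Z_i$, where $\mu_i(\vec Y) := \bbE[X_i \mid \vec Y]$ and the residuals $Z_i := X_i - \mu_i(\vec Y)$ are jointly Gaussian and independent of $\vec Y$. Setting $\mu_i^* := \bbE[X_i \mid \vec Y = \vec y]$, $c_i := x_i - \mu_i^*$, and $B_i := \{\mu_i(\vec Y) \leq \mu_i^*\}$, the conditional event $\{\min_i(X_i - x_i) \leq 0 \mid \vec Y = \vec y\}$ has the same law as $\{\exists i : Z_i \leq c_i\}$ under the marginal $\nu_Z$ of $\vec Z$. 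Denoting the probability of the latter by $q$, it suffices to prove $\bbP(\min_i(X_i - x_i) \leq 0) \geq q \cdot \min_i \bbP(B_i)$.

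The core step is a pointwise lower bound. Conditioning on $\vec Z$ first and using $\vec Z \perp \vec Y$,
\[
\bbP\Big(\min_i(X_i - x_i) \leq 0\Big) = \int \bbP_{\vec Y}\Big(\exists i : z_i \leq x_i - \mu_i(\vec Y)\Big) \, d\nu_Z(\vec z).
\]
For each $\vec z$ in the event $E_z := \{\vec z : z_{i^*} \leq c_{i^*} \text{ for some } i^*\}$, which has $\nu_Z$-mass exactly $q$, I pick such an $i^*$ and discard all other terms in the inner union: the integrand is at least $\bbP_{\vec Y}(\mu_{i^*}(\vec Y) \leq x_{i^*} - z_{i^*})$. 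Because $z_{i^*} \leq c_{i^*}$ gives $x_{i^*} - z_{i^*} \geq \mu_{i^*}^*$, this is in turn at least $\bbP(B_{i^*}) \geq \min_i \bbP(B_i)$. Integrating over $E_z$ yields the claim.

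For the corollary, when $\vec Y$ is centered and $\vec y = \vec 0$, affinity of $\mu_i$ in $\vec Y$ implies that $\mu_i(\vec Y) - \mu_i(\vec 0) = \mu_i(\vec Y) - \bbE[X_i]$ is a centered Gaussian. Hence $\bbP(B_i) = \tfrac12$ for every $i$, and $\min_i \bbP(B_i) = \tfrac12$ as stated.

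The expected obstacle is not any single estimate but the necessity of letting the index depend on $\vec z$. A more naive route --- committing to a single global $i^*$ (say, $\argmin_i \bbP(B_i)$) before integrating --- would deliver only the marginal probability $\bbP(X_{i^*} \leq x_{i^*} \mid \vec Y = \vec y)$ on the right-hand side, which is strictly smaller than the union probability $q$ in general. Letting $i^*$ depend on $\vec z$ and then integrating is exactly what upgrades the marginal bound to the full union with no loss of a factor.
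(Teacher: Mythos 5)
Your proof is correct and is essentially the same as the paper's: both decompose $\vec X$ into the conditional law $(\vec X \mid \vec Y=\vec y)$ (your $\vec Z + \vec\mu^*$) plus the independent complement $\bbE[\vec X\mid\vec Y]-\bbE[\vec X\mid\vec Y=\vec y]$, and both hinge on choosing the index as a function of the first component — the paper via the first index $J=\inf\{i:X'_i\leq 0\}$, you via $i^*=i^*(\vec z)$ after conditioning on $\vec Z$ — before invoking independence. The only cosmetic difference is that the paper isolates the step as a standalone fact about independent sums $\vec X=\vec X'\oplus\vec X''$, while you integrate directly; a small point to note is that in the centered case $\bbP(B_i)$ can equal $1$ rather than $\tfrac12$ when $\bbE[X_i\mid\vec Y]$ is constant, but the inequality $\geq\tfrac12$ still holds.
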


\begin{lemma}\label{conditional_expectation_of_Gaussian} If $X, \vec Y$ are jointly Gaussian with $\vec Y$ centered, and $\Sigma$ is the covariance matrix of $\vec{Y}$, then for $\vec y \in \operatorname{Range}(\Sigma)$,
	\begin{equation}
		\bbP\left(\bbE[X|\vec Y] \leq \bbE[X|\vec Y = \vec y]\right) \geq \Phi_{\mathcal{N}(0, 1)}\left(-\sqrt{\vec y \cdot \Sigma^\dagger \vec y}\right).
	\end{equation}
where $\Sigma^\dagger$ denotes the Moore-Penrose pseudo-inverse of $\Sigma$, and $\Phi_{\mathcal{N}(0, 1)}$ denotes the CDF of the standard normal distribution.  
\end{lemma}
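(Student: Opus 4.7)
The plan is to reduce the statement to a standard Gaussian tail computation combined with a generalized Cauchy--Schwarz inequality involving the pseudo-inverse. Since $(X, \vec Y)$ is jointly Gaussian and $\vec Y$ is centered, the conditional expectation is affine in $\vec Y$: writing $\bbE[X|\vec Y] = \bbE[X] + \vec a \cdot \vec Y$ for some $\vec a \in \operatorname{Range}(\Sigma)$ (which can be taken to be $\Sigma^\dagger \Cov(\vec Y, X)$), the constant $\bbE[X]$ cancels in the comparison $\bbE[X|\vec Y] \leq \bbE[X|\vec Y = \vec y]$ and the event reduces to $\{\vec a \cdot \vec Y \leq \vec a \cdot \vec y\}$.

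Next I would note that $\vec a \cdot \vec Y$ is a centered Gaussian with variance $\vec a \cdot \Sigma \vec a$, so its CDF gives
\begin{equation}
\bbP(\vec a \cdot \vec Y \leq \vec a \cdot \vec y) = \Phi_{\mathcal N(0,1)}\!\left(\frac{\vec a \cdot \vec y}{\sqrt{\vec a \cdot \Sigma \vec a}}\right),
\end{equation}
with the convention that the ratio is $+\infty$ when $\vec a \cdot \Sigma \vec a = 0$ (in which case $\vec a \cdot \vec y = 0$ as well since $\vec y \in \operatorname{Range}(\Sigma)$, and the probability equals $1$, so the bound is trivial). Since $\Phi_{\mathcal N(0,1)}$ is monotone, the desired inequality will follow from the estimate
\begin{equation}
\frac{\vec a \cdot \vec y}{\sqrt{\vec a \cdot \Sigma \vec a}} \geq -\sqrt{\vec y \cdot \Sigma^\dagger \vec y},
\end{equation}
or equivalently $(\vec a \cdot \vec y)^2 \leq (\vec a \cdot \Sigma \vec a)(\vec y \cdot \Sigma^\dagger \vec y)$.

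The key step, and really the only substantive one, is this generalized Cauchy--Schwarz inequality. Because $\vec y \in \operatorname{Range}(\Sigma)$, we can pick $\vec z := \Sigma^\dagger \vec y$, which satisfies $\Sigma \vec z = \vec y$ and $\vec z \cdot \Sigma \vec z = \vec y \cdot \Sigma^\dagger \vec y$. Applying the standard Cauchy--Schwarz inequality to the positive semi-definite bilinear form $(u,v)\mapsto u \cdot \Sigma v$ then yields
\begin{equation}
(\vec a \cdot \vec y)^2 = (\vec a \cdot \Sigma \vec z)^2 \leq (\vec a \cdot \Sigma \vec a)(\vec z \cdot \Sigma \vec z) = (\vec a \cdot \Sigma \vec a)(\vec y \cdot \Sigma^\dagger \vec y),
\end{equation}
which is the required bound.

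The main (and only) obstacle is recognizing that the pseudo-inverse is precisely the right object: restricting $\Sigma$ to $\operatorname{Range}(\Sigma)$ makes it invertible there, so choosing $\vec z = \Sigma^\dagger \vec y$ provides the correct ``dual'' vector to apply Cauchy--Schwarz against. Once this is in place, the argument is routine. Degenerate cases (when $\Sigma$ has a non-trivial kernel, or when $\vec a \cdot \Sigma \vec a = 0$) are handled by the hypothesis $\vec y \in \operatorname{Range}(\Sigma)$, which ensures that all quantities appearing are well defined and that the identity $\Sigma \Sigma^\dagger \vec y = \vec y$ holds.
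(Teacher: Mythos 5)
Your proof is correct and follows essentially the same route as the paper's: both reduce the claim to the inequality $(\vec a\cdot\vec y)^2\leq(\vec a\cdot\Sigma\vec a)(\vec y\cdot\Sigma^\dagger\vec y)$ and establish it by Cauchy--Schwarz, with the paper phrasing it as Euclidean Cauchy--Schwarz applied to $\sqrt{\Sigma}\vec l$ and $\sqrt{\Sigma^\dagger}\vec y$ while you apply Cauchy--Schwarz for the PSD bilinear form $u\cdot\Sigma v$ directly to $\vec a$ and $\vec z=\Sigma^\dagger\vec y$; these are equivalent. Your treatment of the degenerate case $\vec a\cdot\Sigma\vec a=0$ is a small, correct addition (the paper dispatches it by a ``WLOG non-degenerate'' remark).
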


\begin{lemma}\label{lemma:magnitude_Delta}
	Let $h$ be a DGFF on $W \subset \mathbb{Z}^2$ with zero boundary condition. Given $V \subset W$, let $\phi^{W, V}$ be the harmonic extension of $h|_{V^\complement}$ on $V$. There exists an \emph{absolute} constant $C>0$ such that for any $x \in V \subset W \subset \bbZ^2$,
	\begin{equation}
		g \log \left(\frac{\dist(x, W^\complement)}{\diam(V)}\right) - C \leq \var(\phi^{W, V}(x)) \leq g \log \left(\frac{\diam (W)}{\dist(x, V^\complement)}\right) + C.
	\end{equation}
\end{lemma}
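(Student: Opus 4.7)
The plan is to reduce the variance of $\phi^{W,V}(x)$ to a difference of on-diagonal Green's functions and then invoke the standard potential-kernel estimate on $\mathbb Z^2$. By the Gibbs--Markov (domain Markov) decomposition of the DGFF, on $V$ one has the independent sum $h\restriction_V = \phi^{W,V}\restriction_V \oplus h^V$, where $h^V$ is a DGFF on $V$ with zero boundary condition (hence pointwise variance $G_V(x,x)$) and $\phi^{W,V}$ is a measurable function of $h\restriction_{V^c}$. Taking variances at $x \in V$ gives
\begin{equation}
	\var(\phi^{W,V}(x)) \;=\; G_W(x,x) - G_V(x,x),
\end{equation}
so it suffices to establish, for a generic bounded $U \subset \bbZ^2$ and any $x \in U$, the two-sided bound
\begin{equation}
	g \log \dist(x, U^c) - C \;\leq\; G_U(x,x) \;\leq\; g \log \diam(U) + C,
\end{equation}
with an \emph{absolute} constant $C$, and then to combine these applied to $U = W$ (lower bound) and $U = V$ (upper bound), and vice versa, to produce the two inequalities in the lemma, absorbing the final factor of two into $C$.

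The Green-function bound is obtained from the potential-kernel identity $G_U(x,x) = \mathsf E^x\!\left[a(S_{\tau_{U^c}} - x)\right]$ (which uses $a(0)=0$), where $a$ denotes the standard potential kernel of simple random walk on $\bbZ^2$ and satisfies the classical Lawler asymptotic $a(y) = g \log \|y\| + \kappa + O(\|y\|^{-2})$ uniformly for $y \ne 0$. Since any nearest-neighbor walk started at $x \in U$ must exit into a lattice point of $U^c$, the displacement at exit obeys the deterministic bound
\begin{equation}
	1 \;\leq\; \dist(x, U^c) \;\leq\; \|S_{\tau_{U^c}} - x\| \;\leq\; \diam(U) + 1.
\end{equation}
Plugging this almost-sure control into the Lawler asymptotic and using the monotonicity of $\log$ yields the desired two-sided Green-function bound with an absolute constant $C$.

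The only thing to watch is that the $O(1)$ constants are genuinely absolute: $\kappa$ and the $O(\|y\|^{-2})$ remainder in the Lawler asymptotic are intrinsic to simple random walk on $\bbZ^2$ and independent of $U$ and $x$, and the deterministic lower bound $\|S_{\tau_{U^c}} - x\| \geq 1$ ensures that the logarithm in $a$ is never evaluated near its singularity. No regularity of $\partial V$ or $\partial W$ is used, which is consistent with the lemma asserting only that $C$ is absolute. The proof is thus essentially bookkeeping; I do not anticipate a genuine obstacle.
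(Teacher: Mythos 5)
Your proposal is correct and follows essentially the same route as the paper: Gibbs--Markov decomposition to reduce to $G_W(x,x)-G_V(x,x)$, then the potential-kernel representation $G_U(x,x)=\mathsf E^x[a(S_{\tau_{U^\complement}}-x)]$ (the paper writes the identical quantity as a sum against the harmonic measure $\Pi_U(x,\cdot)$) together with the Lawler asymptotics $a(y)=g\log\|y\|_2+O(1)$. The only cosmetic difference is that you bundle the two exit-displacement bounds into a single clean two-sided estimate for $G_U(x,x)$ and then apply it twice, whereas the paper directly compares the two harmonic-measure sums term by term; both yield the stated bound with an absolute constant since the exit displacement is deterministically sandwiched between $1$ and $\diam(U)+1$.
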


\begin{lemma}\label{lemma:diff_Delta_squared_modified}
	Let $h$ be a DGFF on $W \subset \mathbb{Z}^2$ with zero boundary condition. Given $V \subset W$, let $\phi^{W, V}$ be the harmonic extension of $h|_{V^\complement}$ on $V$. There exists an \emph{absolute} constant $C>0$ such that for any $x,y\in U \subset V \subset W \subset \bbZ^2$,
	\begin{equation}
		\mathbb{E}\left[\left( \phi^{W, V}(x)-\phi^{W, V}(y)\right)^2\right]\leq C \frac{\|x-y\|_2}{\dist(U,V^\complement)} \leq C \frac{\diam(U)}{\dist(U,V^\complement)}.
	\end{equation}
\end{lemma}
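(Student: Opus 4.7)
The plan is to combine the domain Markov property of the DGFF with Spitzer's two-dimensional potential kernel asymptotic
\[ a(u,v) = g\log\|u-v\| + \kappa + O(\|u-v\|^{-2}), \]
arranging things so that the logarithmic singularities cancel between $W$ and $V$ and only an $O(\|x-y\|/d)$ remainder survives, where $d:=\dist(U,V^\complement)$.

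First, by the Markov property, I decompose $h^W|_V = \phi^{W,V}+h^V$ with the two summands independent and $h^V$ a zero-boundary DGFF on $V$. Since variances of independent Gaussians add, this yields
\[
    \mathbb{E}\bigl[(\phi^{W,V}(x)-\phi^{W,V}(y))^2\bigr]
    = \mathbb{E}\bigl[(h^W(x)-h^W(y))^2\bigr] - \mathbb{E}\bigl[(h^V(x)-h^V(y))^2\bigr].
\]
Setting $Z_D := S_{\tau_{D^\complement}}$ and $A^D(u,v):=\mathsf{E}^u[a(Z_D,u)-a(Z_D,v)]$, the standard representation $G_D(u,v) = \mathsf{E}^u[a(Z_D,v)] - a(u,v)$, together with the symmetry of $G_D$, gives for each $D\in\{V,W\}$,
\[
    \mathbb{E}\bigl[(h^D(x)-h^D(y))^2\bigr] = A^D(x,y) + A^D(y,x) + 2a(x,y).
\]
The key observation is that subtracting the $D=V$ identity from the $D=W$ identity annihilates the $2a(x,y)$ term, leaving
\[
    \mathbb{E}\bigl[(\phi^{W,V}(x)-\phi^{W,V}(y))^2\bigr] = \bigl[A^W(x,y)-A^V(x,y)\bigr] + \bigl[A^W(y,x)-A^V(y,x)\bigr].
\]

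It remains to bound each $|A^D(u,v)|$ by $C\|x-y\|/d$. For either $D\in\{V,W\}$ one has $D^\complement\subseteq V^\complement$, so for every $z\in D^\complement$ and every $u\in U$, $\|z-u\|\geq d$. Combining Spitzer's asymptotic with the elementary estimate $|\log(a/b)|\leq |a-b|/\min(a,b)$ gives
\[
    |a(z,x)-a(z,y)|
    \leq g\,\frac{\bigl|\|z-x\|-\|z-y\|\bigr|}{\min(\|z-x\|,\|z-y\|)} + O(d^{-2})
    \leq C\,\frac{\|x-y\|}{d},
\]
where the $O(d^{-2})$ error is absorbed using $\|x-y\|\geq 1$ and $d\geq 1$. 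Taking the $\mathsf{E}^x$-expectation preserves the bound, so $|A^D(x,y)|\leq C\|x-y\|/d$ uniformly in $D$, and summing the four terms yields the claim.

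The argument is essentially a bookkeeping exercise combining three classical ingredients: the Markov decomposition of the DGFF, the potential-kernel representation of the Green's function, and the log-ratio inequality. The only thing to check with some care is that Spitzer's $O(\|\cdot\|^{-2})$ error term has a genuinely absolute constant---so that the constant $C$ in the lemma is absolute---which follows from the standard asymptotic expansion in Spitzer's book. No case split is needed, since the argument is insensitive to whether $\|x-y\|$ is small or comparable to $d$.
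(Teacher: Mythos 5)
Your proof is correct and follows essentially the same route as the paper: Gibbs--Markov decomposition to reduce to a difference of Green functions, the potential-kernel representation $G_D(u,v)=\mathsf{E}^u[a(Z_D-v)]-a(u-v)$, and the elementary log-ratio inequality to convert the boundary averages into an $O(\|x-y\|/\dist(U,V^\complement))$ bound. The only (cosmetic) difference is that you bound the four boundary terms $A^D$ symmetrically in absolute value, whereas the paper bounds the $W$-terms from above and the $V$-terms from below before subtracting; both yield the same estimate.
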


\begin{lemma}\label{lemma:dudley_modified} Let $\Delta$ be a mean zero Gaussian field on $U \subset \bbZ^2$ satisfying 
	\begin{equation}\label{eq:dudley_assumption_2}
		\bbE[| \Delta (x)-\Delta(y)|^2] \leq L \|x-y\|_2
	\end{equation}
	for all $x, y$ in $U$. Then there exists an \emph{absolute} constant $K > 0$ such that
	\begin{equation}\mathbb{E}\left[\sup_{x,y\in U} | \Delta (x)-\Delta(y)|\right]\leq K c_U^{-\tfrac 14}\sqrt{L \diam(U)}.
	\end{equation}
	where $c_U := \inf \left\{\frac{| \cC_r(x) \cap U|}{r^2}: x\in U, 0 < r\leq \diam U \right\}$. (Here $\cC_r(x)$ is the discrete ball defined in \eqref{notation_discrete_ball}.)
\end{lemma}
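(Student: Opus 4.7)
The plan is to recognize this as a standard application of Dudley's entropy integral, with the work being the translation of the covering numbers from the canonical metric of $\Delta$ to Euclidean covering numbers, and then to lattice-point counts via $c_U$.

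\textbf{Step 1: Setup and Dudley's inequality.} Introduce the canonical pseudometric $d(x,y):=\sqrt{\bbE[(\Delta(x)-\Delta(y))^2]}$. The hypothesis $\eqref{eq:dudley_assumption_2}$ says $d(x,y)\leq \sqrt{L\|x-y\|_2}$; in particular $\diam_d(U)\leq \sqrt{L\diam(U)}$, and every Euclidean ball of radius $\epsilon^2/L$ is contained in a $d$-ball of radius $\epsilon$, giving $N(U,d,\epsilon)\leq N(U,\|\cdot\|_2,\epsilon^2/L)$. Fix $x_0\in U$ and apply the standard Dudley bound to the centered Gaussian process $\Delta(\cdot)-\Delta(x_0)$ on $U$ to get, for an absolute $K_0$,
$$
\bbE\!\left[\sup_{x\in U}|\Delta(x)-\Delta(x_0)|\right]
\leq K_0\int_0^{\sqrt{L\diam(U)}}\!\sqrt{\log N(U,d,\epsilon)}\,d\epsilon,
$$
and note that the triangle inequality gives $\bbE[\sup_{x,y\in U}|\Delta(x)-\Delta(y)|]\leq 2\bbE[\sup_x|\Delta(x)-\Delta(x_0)|]$.

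\textbf{Step 2: Euclidean covering number via $c_U$.} For $r\leq \diam(U)$, take a maximal Euclidean $r$-separated subset $\{x_1,\dots,x_n\}\subset U$; the balls $\cC_{r/2}(x_i)$ are disjoint and each contains at least $c_U(r/2)^2$ lattice points of $U$ by the definition of $c_U$. Since $U\subset\bbZ^2$ is contained in a Euclidean ball of diameter $\diam(U)$, one has the crude lattice bound $|U|\leq C(\diam U)^2$ (the nontrivial case being $\diam U\geq 1$). Hence
$$
N(U,\|\cdot\|_2,r)\leq n\leq \frac{4|U|}{c_U r^2}\leq \frac{C(\diam U)^2}{c_U\,r^2},
$$
and combined with Step 1,
$$
N(U,d,\epsilon)\leq \frac{C\,L^{2}(\diam U)^{2}}{c_U\,\epsilon^{4}} \quad\text{for } 0<\epsilon\leq \sqrt{L\diam U}.
$$

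\textbf{Step 3: Evaluate the entropy integral.} Substitute $\epsilon=t\sqrt{L\diam(U)}$ to obtain
$$
\int_0^{\sqrt{L\diam(U)}}\!\sqrt{\log N(U,d,\epsilon)}\,d\epsilon
\leq \sqrt{L\diam(U)}\int_0^1\sqrt{\log\!\Big(\tfrac{C}{c_U t^{4}}\Big)}\,dt.
$$
Split $\sqrt{\log(C/(c_U t^4))}\leq \sqrt{\log(C/c_U)}+2\sqrt{\log(1/t)}$. The integral of $\sqrt{\log(1/t)}$ over $[0,1]$ is an absolute constant (and handles the integrable divergence at $t=0$). For the first piece, use the elementary inequality $\log x\leq x^{1/2}$ valid for $x\geq 1$, which yields $\sqrt{\log(C/c_U)}\leq (C/c_U)^{1/4}\lesssim c_U^{-1/4}$ after absorbing the absolute constant $C$ (using that $c_U$ is bounded above by an absolute constant, so the regime $C/c_U<1$ is trivial). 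Combining gives the claimed $K c_U^{-1/4}\sqrt{L\diam(U)}$.

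\textbf{Main point of care.} No deep obstacle: the one thing to handle carefully is the small-$\epsilon$ end of the Dudley integral, where $\sqrt{\log N(U,d,\epsilon)}\sim 2\sqrt{\log(1/\epsilon)}$ diverges but is integrable; this divergence is absorbed into the $t$-dependent part of the split and produces only an absolute constant, so all the $c_U$-dependence is packaged into the single factor $c_U^{-1/4}$ as required.
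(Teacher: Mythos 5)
Your proof is correct, and it takes a genuinely different (though closely related) chaining route from the paper's. The paper invokes Fernique's majorizing-measure theorem with $\mu$ taken to be the normalized counting measure on $U$, then lower-bounds $\mu(B_\rho(x,r))$ directly via the $c_U$-definition applied to the Euclidean ball $\cC_{L^{-1}r^2}(x)\subset B_\rho(x,r)$, and finally rescales the $r$-integral by the factor $(|U|L^2/c_U)^{1/4}$. You instead use Dudley's metric-entropy integral: you convert the $\rho$-covering numbers to Euclidean covering numbers via the modulus hypothesis, then bound the Euclidean covering number by a packing argument in which each disjoint half-radius ball contributes at least $c_U(r/2)^2$ lattice points of $U$, giving $N(U,\|\cdot\|_2,r)\lesssim (\diam U)^2/(c_U r^2)$; you then rescale by $\sqrt{L\diam U}$ and absorb the resulting $c_U$-dependent constant with the elementary inequality $\log x\le\sqrt{x}$ (justified because $c_U\le 1$ for any nonempty $U\subset\bbZ^2$, since $\cC_1(x)\cap U=\{x\}$). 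The geometric input — a volume lower bound of order $c_U r^4/L^2$ for $\rho$-balls — is the same in both proofs, merely packaged as a measure lower bound in one and a packing bound in the other. Your version is arguably the more standard/elementary presentation, since Dudley's entropy integral is more widely known than Fernique's majorizing-measure form, at the cost of the extra $\log x\le\sqrt x$ step to extract the $c_U^{-1/4}$ factor, which the paper's rescaling produces automatically.
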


\begin{lemma}\label{lemma:escape_probability_general}
	Let $U \subset \mathbb{Z}^2$ be a discrete domain. Suppose the outer boundary $\partial^{\text{out}} U$ is partitioned into two disjoint sets: a trap $E$ and an escape region $D$. We assume there exists an open set $\Gamma \subset \mathbb{R}^2$ such that
	\begin{equation}\label{eq:Gamma_assumption_1}
		\sup_{x\in E} \dist (x, \partial \Gamma) \leq M, \quad \sup_{x\in U} \dist (x, \Gamma^\complement) \leq M
	\end{equation}
	for some $1 \leq M < \infty$, and $\partial \Gamma$ is a $C^2$ curve with curvature absolutely bounded by
	\begin{equation}
		\kappa_{\max}\leq \max(2M, \diam(U)/R)^{-1}
	\end{equation}\label{eq:Gamma_assumption_curvature}
	for some $0 < R < \infty$. Then there exists a constant $C = C(M, R) > 0$ such that for all $x\in U$,
	\begin{equation}
	\mathsf{P}^{x}\left(\tau_{D} < \tau_{E}\right) \le C \frac{\dist(x, E)}{\dist(x, D)}.
	\end{equation}
\end{lemma}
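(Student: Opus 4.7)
My approach is to realize the estimate as a Gambler's-ruin-type bound along the ``height above $\partial\Gamma$'' direction, using the signed distance to $\partial\Gamma$ as a near-harmonic coordinate for the random walk, and then closing via optional stopping combined with a dyadic localization.

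\emph{Near-harmonic coordinate.} Let $d(y)$ denote the signed distance from $y\in\R^2$ to $\partial\Gamma$, positive on the $\Gamma$-side. By the tubular-neighborhood theorem together with the curvature bound $\kappa_{\max}\leq (2M)^{-1}$, $d$ is $C^\infty$ on the open $2M$-neighborhood of $\partial\Gamma$, which by \eqref{eq:Gamma_assumption_1} contains $U\cup E\cup\partial^{\text{out}}U$. On this tube one has $|\nabla d|\equiv 1$ and the standard identity $|\Delta d(y)|\leq \kappa_{\max}/(1-|d(y)|\kappa_{\max})\leq 2\kappa_{\max}$, with higher derivatives satisfying $\|\nabla^k d\|_\infty\leq C_k\kappa_{\max}^{k-1}$. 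A fourth-order Taylor expansion around each lattice point therefore yields $|Ld(y)|\leq C\kappa_{\max}$ pointwise on $U$, with $L$ as in \eqref{e:1.1}.

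\emph{Supermartingale and localization.} Since $L\|y-x\|_2^2\equiv-1$ (the one-step variance of simple random walk), for an absolute constant $C_0$ large enough the function
\[
	g(y):=d(y)-C_0\kappa_{\max}\|y-x\|_2^2
\]
satisfies $Lg(y)\geq 0$ on $U$, so $g(S_t)$ is a supermartingale while $S$ stays in $U$. Set $a:=\dist(x,E)$, $b:=\dist(x,D)$; assume $b\gg a\vee M$ (otherwise the claim is trivial). Localize to the ball $B_{b/2}(x)$, which excludes $D$, and stop at $\tau':=\tau_E\wedge\tau_{\partial^{\text{out}}B_{b/2}(x)\cap U}$; leaving $B_{b/2}(x)$ is necessary for $\tau_D<\tau_E$, so $\mathsf{P}^x(\tau_D<\tau_E)\leq\mathsf{P}^x(\tau_{\partial^{\text{out}}B_{b/2}(x)}<\tau_E)$. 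Optional stopping for $g$ gives $d(x)=g(x)\geq\mathbb{E}^x d(S_{\tau'})-C_0\kappa_{\max}(b/2)^2$, with the correction bounded by $\tfrac14 C_0 Rb$ via $\kappa_{\max}\diam(U)\leq R$. Combined with the triangle inequality $d(x)\leq\dist(x,\partial\Gamma)\leq a+M$ (since $E$ lies within $M$ of $\partial\Gamma$) and $d(y)\geq-M$ on $y\in E$, one is reduced to controlling the minimum of $d$ on the artificial boundary $\partial^{\text{out}}B_{b/2}(x)\cap U$.

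\emph{Main obstacle.} The delicate step is precisely this: on the artificial boundary $\partial^{\text{out}}B_{b/2}(x)$, $d$ may range in $[-(b/2+M),\,b/2+M]$, and the naive lower bound can be too negative to close the inequality; geometrically, $\partial\Gamma$ may curl back into $B_{b/2}(x)$, so the walker can ``escape'' into regions where the height coordinate is small. My plan is to handle this by a dyadic iteration across scales $a,2a,4a,\ldots,b$: at each annulus $B_{2^{k+1}a}(x)\setminus B_{2^k a}(x)$ I apply the single-scale version of the estimate to the walker's first escape to the next annulus without hitting $E$, and chain the resulting $O(\log_2(b/a))$ Harnack-type factors via a geometric-series argument whose total product is an absolute constant. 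This bootstrap absorbs the curvature-dependent quantities into the final constant $C(M,R)$, whose dependence on $M$ comes from the width of the $E$-strip and on $R$ from the macroscopic curvature bound $\kappa_{\max}\diam(U)\leq R$.
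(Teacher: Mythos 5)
Your approach breaks down at the very first step because you have misread assumption \eqref{eq:Gamma_assumption_1}. The condition $\sup_{x\in U}\dist(x,\Gamma^\complement)\leq M$ bounds how far $U$ penetrates \emph{into} $\Gamma$; it says nothing about how far $U$ extends \emph{away from} $\partial\Gamma$ on the $\Gamma^\complement$ side, where $\dist(x,\Gamma^\complement)=0$ for all $x$. In the paper's application $\Gamma\approx NV^{r/N}$ and $U\approx W_N\setminus V_{N,r}$, so the bulk of $U$ sits deep inside $\Gamma^\complement$, at distance up to $\diam(U)$ from $\partial\Gamma$. Hence the $2M$-tube around $\partial\Gamma$ does \emph{not} contain $U$, the signed distance $d$ is not $C^2$ on all of $U$ (cut locus), and the pointwise bound $|Ld(y)|\lesssim\kappa_{\max}$ you need to make $g(y)=d(y)-C_0\kappa_{\max}\|y-x\|_2^2$ a supermartingale simply does not hold where the walker spends most of its time. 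The construction of $g$ is therefore invalid as written.

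Even setting aside this issue, the step you flag as the ``main obstacle'' --- controlling $d$ on the artificial boundary $\partial^{\text{out}}B_{b/2}(x)$ --- is precisely the crux, and your ``dyadic iteration'' plan is only a sketch. In two dimensions the walker escaping one annulus is not localized in the next, and if $\partial\Gamma$ curls back, it can re-approach $E$ without making progress in $d$; chaining $O(\log(b/a))$ single-scale half-plane factors to get a clean $a/b$ needs a way to control the walker's position that the signed-distance coordinate does not provide. The paper avoids both problems with a different construction: it places a point $z$ a distance $r=\min(\kappa_{\max}^{-1},\dist(x,D)/K)$ inside $\Gamma$ (on the opposite side of $\partial\Gamma$ from $U$, so outside $U$) using the interior ball condition, and applies optional stopping to the \emph{exactly} harmonic potential kernel $a(\cdot-\tilde z)$. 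Because $\dist(z,E)\asymp r$ while $\dist(z,D)\asymp\dist(x,D)$, the logarithmic asymptotics of $a$ give the estimate in a single stroke, with no iteration and no need for $d$ to be globally smooth. If you want to salvage a distance-based argument you would need to truncate $d$ at the tube boundary and then do genuinely new work to propagate the estimate through $\Gamma^\complement$; I would recommend instead adopting the potential-kernel argument.
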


\section{Proof of the upper bound in Theorem \ref{main_tail_bound}}
\label{s:u}
We begin with a ``binding field'' decomposition for $h|Z_h=0$ on $V_N$. Let $\phi := \phi^{W_N, V_N^-}$ be the harmonic extension of $h|_{(V_N^-)^\complement}$ on $V_N^-:= V_N\setminus \partial^{\text{in}} V_N$, then we have the Gibbs-Markov decomposition
\begin{equation}\label{eq:binding_field_decomposition}
	h = \phi^{W_N, V_N^-} \oplus h^{V_N^-} = \phi \oplus h',
\end{equation}
where $h' := h^{V_N^-}$ is independent from $\phi$ and has the law of DGFF on $V_N^-$ with zero boundary condition. Since $Z_h$ defined in \eqref{def_Z_h} is a linear combination of $h|_{\partial^{\text{in}}  V_N}$ and $\partial^{\text{in}} V_N \subset (V_N^-)^\complement$, $Z_h$ is a function of $\phi$, so conditioning on $Z_h = 0$ yields that
\begin{equation}
	(h|Z_h = 0) = \Delta \oplus h', \qquad \Delta := (\phi|Z_h = 0).
\end{equation}

Our strategy is to uniformly control the $\Delta$ field at exponentially many small boxes near the boundary of $V_N$ and then control the $h'$ field in these boxes by adapting the argument of \cite[Section 2.4]{Ding2013}. Let $c > 0$ be a parameter to be specified later. For $1 \ll u\leq (\log N)^{2/3}$, let us introduce the following scales
\begin{itemize}
	\item $r := N e^{-cu}$. 
	\item $r' := Ne^{-cu (1+\eta)}$ for $\eta\geq 0$. 
	\item $l := N e^{-cu(1-\delta)}$ for $\delta \in (0, 1)$. 
\end{itemize}

Let $V_{N, r} := N V^{r/N} \cap \mathbb{Z}^2$, where $V^{r/N}$ is the $r/N$-interior of $V$ defined in \eqref{def_delta_interior}. As in \eqref{notation_discrete_box}, we denote by $Q_{r'}(x)$ the $r' \times r'$ box centered at $x$. 

\begin{proposition}\label{SS-dilution} For $\eta \in (0, 1)$ and $\delta \in (0, 1/2)$, there exists a constant $C < \infty$ such that with complementary probability at most $e^{-e^{cu\eta}/C}$, there exists $m \geq e^{cu(1/2-\delta)}/C$ points $x_1, x_2, \cdots, x_m$ in $\partial^{\text{in}} V_{N,r}$, separated by distance at least $l$ from each other, such that $\Delta(x) \leq 3$ for $x \in \bigcup_{i=1}^m Q_{r'}(x_i)$. 
\end{proposition}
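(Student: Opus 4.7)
My plan is to (i) use an oscillation bound on the $r'$-boxes to reduce to a pointwise condition $\Delta(x_i)\le 2$ at a fixed $l$-spaced grid in $\partial^{\text{in}} V_{N,r}$, and (ii) exhibit many good grid points via a boundary Gibbs--Markov decomposition combined with Chernoff across mesoscale arcs.

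For (i), since $\Delta\eqd \phi - c(\cdot)Z_h$ for a deterministic function $c$, it suffices to bound the oscillation of $\phi$ on any $r'$-box $Q_{r'}(x_0)$ centered at $x_0\in \partial^{\text{in}} V_{N,r}$. Lemma~\ref{lemma:diff_Delta_squared_modified} with $V=V_N^-$ and $U=Q_{r'}(x_0)$ gives $\bbE[(\phi(x)-\phi(y))^2]\lesssim \|x-y\|_2/r$; Lemma~\ref{lemma:dudley_modified} then yields expected oscillation $\lesssim \sqrt{r'/r} = e^{-cu\eta/2}$; Borell--TIS upgrades this to a per-box tail $\exp(-ce^{cu\eta})$; and a union bound over an $l$-spaced grid of $\lesssim e^{cu(1-\delta)}$ centers in $\partial^{\text{in}} V_{N,r}$ absorbs into $\exp(-e^{cu\eta}/C)$. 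On this oscillation-good event it remains to find $m\ge e^{cu(1/2-\delta)}/C$ grid points with $\Delta(x_i)\le 2$.

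For the existence step, partition $\partial V_N$ into $K\asymp e^{cu\eta}$ arcs and attach to each a disjoint ``boundary rectangle'' $A_k\subset V_N$ of both length and inward depth $\asymp Ne^{-cu\eta}$, so that the corresponding arc $I_k\subset \partial^{\text{in}} V_{N,r}$ sits well inside $A_k$. Apply Gibbs--Markov on $A:=\bigsqcup_k A_k$: $h=\tilde\psi \oplus \bigoplus_k h^{A_k}$ with $h^{A_k}$ independent DGFFs on $A_k$. Decompose $\phi=\phi_\psi+\sum_k \phi^{A_k}$ via the Poisson representation, $\phi^{A_k}(x):=\mathsf E^x[h^{A_k}(S_{\tau_{V_N^-}})]$. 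The $\phi^{A_k}$ are independent; for $x\in I_k$, $\var \phi^{A_k}(x)\asymp gcu(1-\eta)$ (it sees all scales between $r$ and $Ne^{-cu\eta}$), while for $x\in I_{k'}$ with $k'\ne k$ the harmonic measure hitting $A_k$ from such $x$ is $\lesssim e^{-cu(1-\eta)}$, so $\phi^{A_k}|_{I_{k'}}$ is $o(1)$. The conditioning $Z_h=0$ couples the $(h^{A_k})$ only through the single linear constraint $\sum_k Z^{A_k}=-Z^{\tilde\psi}$; since $\var(Z^{A_k})/\sum_j \var(Z^{A_j})\lesssim 1/K$, the induced mean shift per $h^{A_k}$ is deterministic and $O(1)$-bounded, leaving the $h^{A_k}$'s effectively independent under the conditioning.

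Conditional on $\phi_\psi$ and the shift, within each arc $I_k$ a Paley--Zygmund estimate on $N_k := \#\{x_i\in I_k:\phi(x_i)\le 2\}$ --- using $|\phi_\psi|_{I_k}|\le C\sqrt{cu\eta}$ on a high-probability event (Borell--TIS on $\sup \phi_\psi$) and the log-correlated covariance of $\phi^{A_k}|_{I_k}$ (variance $\asymp cu(1-\eta)$, $\asymp e^{cu(1-\delta-\eta)}$ candidates), which together give $\var(N_k)\lesssim (\bbE N_k)^2/(cu(1-\eta))$ --- yields $\bbP(N_k\ge n_k\mid \phi_\psi)\ge \alpha$ for some absolute $\alpha>0$, where $n_k := \max(1,\lceil e^{cu(1/2-\delta-\eta)}/C\rceil)$. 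Chernoff applied to the conditionally independent $\1_{\{N_k\ge n_k\}}$ then produces $\ge\alpha K/2$ successful arcs with probability $\ge 1-\exp(-\alpha^2 K/2)=1-\exp(-e^{cu\eta}/C)$, yielding at least $m\ge \alpha K n_k/2 \ge e^{cu(1/2-\delta)}/C$ good centers (pairwise $l$-separated since they lie in distinct or well-spaced positions within $A_k$'s). The main technical obstacles are verifying the boundary Gibbs--Markov decomposition under $Z_h=0$ --- in particular, that the single-constraint coupling is a mild perturbation and the cross-arc residuals $\phi^{A_{k'}}|_{I_k}$ for $k'\ne k$ are negligible --- and executing the per-arc Paley--Zygmund bound, whose second-moment computation is delicate because $\phi^{A_k}|_{I_k}$ is log-correlated and hence $\var(N_k)$ is only a $\mathrm{poly}(cu)$ factor smaller than $(\bbE N_k)^2$.
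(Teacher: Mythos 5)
Your step (i) — oscillation control via Lemma~\ref{lemma:diff_Delta_squared_modified}, Fernique, and Borell--TIS, followed by a union bound — matches the paper's Lemma~\ref{oscillation_Delta} and the last step of its argument, and is fine. But the existence step (ii) is a genuinely different route (a mesoscale Gibbs--Markov arc decomposition with a per-arc second-moment argument and Chernoff across arcs), and it has a gap that I don't see how to close without essentially reproducing the key lemma the paper uses.

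The central problem is the assertion that $\sup_k|\phi_\psi|_{I_k}|\le C\sqrt{cu\eta}$ holds ``on a high-probability event.'' Since $\phi_\psi$ restricted to each arc $I_k$ is roughly a single Gaussian of variance $\asymp cu\eta$ and there are $K\asymp e^{cu\eta}$ arcs, the maximum of $|\phi_\psi|_{I_k}|$ over $k$ is of order $\sqrt{cu\eta\cdot\log K}\asymp cu\eta$, not $\sqrt{cu\eta}$; requiring all arcs to be within one standard deviation fails with constant probability, not with doubly-exponentially small probability. What you actually need for the Chernoff step is that a $\Theta(1)$ fraction of arcs have $\phi_\psi|_{I_k}$ not too positive, with failure probability $e^{-\Theta(K)}$. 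But the $\phi_\psi|_{I_k}$'s are correlated across arcs, so this is not automatic — and in fact controlling it amounts to showing that a weighted average of $\phi_\psi$ (or equivalently of $\Delta$) over the boundary annulus has exponentially small variance. That is exactly the content of the paper's Lemma~\ref{variance_average_Delta}, which is the quantitative place where the conditioning $Z_h=0$ enters and which your sketch neither proves nor invokes. Without it, your argument only yields a $\Theta(1)$ success probability rather than $1-e^{-e^{cu\eta}/C}$. The paper sidesteps the mesoscale decomposition entirely: it shows $\var(\overline\Delta)\lesssim e^{-cu}$ directly via a random-walk representation of $\overline\phi$ and time-reversal, and then runs the elegant dichotomy ``either a very negative value somewhere, or a positive $\gamma$-mass of points with $\Delta\le 2$,'' eliminating the first branch by a crude union bound and converting the second into $l$-separated points via a $4$-coloring of $l$-boxes.

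Two smaller issues worth noting. First, the estimate $\var(N_k)\lesssim(\bbE N_k)^2/(cu(1-\eta))$ is too strong for a log-correlated field: pairs of points at distance $O(l)$ within $I_k$ have correlation $1-O(\delta)$, so the second moment is only $O(1)$-comparable to $(\bbE N_k)^2$, not a $\mathrm{poly}(u)$ factor smaller — though $\var(N_k)\lesssim(\bbE N_k)^2$ suffices for Paley--Zygmund with a fixed $\alpha$, so this alone is not fatal. Second, the ``single linear constraint'' claim about $Z_h=0$ being a mild perturbation to the $(h^{A_k})$ needs more care: the constraint couples the $(Z^{A_k})$ to $Z^{\tilde\psi}$, and the induced conditioning on $\tilde\psi$ (hence on $\phi_\psi$) is not small — in fact it is precisely what kills the zero mode of $\phi_\psi$ and is responsible for the mechanism missing from your argument. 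I would suggest looking at the paper's Lemma~\ref{variance_average_Delta} and Lemma~\ref{lem:sigma_Z_representation}, which isolate exactly the structural fact you need.
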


This main Proposition will be proved in the next Subsections. 

\begin{lemma}\label{lemma:Ding_argument} Given points $x_1, x_2, \cdots, x_m \in \partial^{\text{in}} V_{N,r}$ separated by distance at least $l$ from each other, let $\cQ_{r'} = \{Q_{r'}(x_i): 1\leq i\leq m\}$. There exists absolute constant $C > 0$ such that
	\begin{equation}\label{event_min_h'_boxes}
		\bbP\left(\min_{x\in \cup \cQ_{r'}} h'(x)  \geq -m_{r'} + C \right) \leq  e^{-(l/r)/C} + 2 e^{-m/C}.
	\end{equation}
\end{lemma}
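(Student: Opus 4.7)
I plan to follow Ding's strategy~\cite{Ding2013} for the double-exponential right tail of the DGFF minimum, adapted to our near-boundary geometry. The key observation is that although $h'$ is correlated across $V_N^-$, a Gibbs-Markov decomposition at a scale $\rho$ intermediate between the box size $r'$ and the point separation $l$ produces independent DGFFs on disjoint neighborhoods of the $x_i$'s, making Ding's single-box tail bound applicable in parallel to each of the $m$ boxes. The factor-of-$m$ coming from the conditional independence is the source of the $e^{-m/C}$ term, while the $e^{-(l/r)/C}$ term captures the failure of the required coarse-field control.

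Concretely, I choose an intermediate scale $\rho$ with $r' \ll \rho \lesssim r$ and set $B_i := Q_\rho(x_i) \cap V_N^-$. Since $|x_i - x_j| \ge l \gg \rho$, the $B_i$'s are pairwise disjoint, and $Q_{r'}(x_i) \subset B_i$ as $r' \ll \rho$. Gibbs-Markov gives $h' = \phi \oplus \tilde h'$, where $\tilde h'|_{B_i}$ are independent DGFFs on the $B_i$'s and $\phi$ is the Gaussian harmonic extension. A further Gibbs-Markov step from $B_i$ down to $Q_{r'}(x_i)$ writes $\tilde h'|_{B_i} = \psi_i \oplus h^{Q_{r'}(x_i)}$ with the $h^{Q_{r'}(x_i)}$'s independent across $i$; I absorb $\phi$ and $\psi_i$ into a coarse shift $S_i := \min_{Q_{r'}(x_i)}(\phi + \psi_i)$. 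Conditioning on the coarse pieces and using independence,
\[
\bbP\bigl(\min_{x \in \cup \cQ_{r'}} h'(x) \ge -m_{r'} + C \bigm| \phi, \{\psi_i\}\bigr) \le \prod_{i=1}^m \bbP\bigl(\min_{Q_{r'}(x_i)} h^{Q_{r'}(x_i)} \ge -m_{r'} + C - S_i\bigr).
\]
Define the good event $G := \{|S_i| \le K \text{ for all } i\}$ for an absolute $K$. On $G$, Ding's double-exponential tail bound from~\cite{Ding2013} applied to each fine-scale DGFF makes every factor at most $q := C_0 \exp(-e^{\beta(C-K)}/C_0)$; choosing $C$ large enough makes $q < 1$, so the product contributes $\le q^m \le 2 e^{-m/C'}$.

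To bound $\bbP(G^c)$: I use that $\Var(\phi(x_i)) = O(1)$, which follows from the cancellation $G_{V_N^-}(x_i,x_i) - G_{B_i}(x_i,x_i) = O(1)$ -- the point $x_i$ lies at distance $\sim r$ from both $\partial V_N^-$ and $\partial B_i$, so both Green-function diagonals equal $g \log r + O(1)$. The oscillation of $\phi$ on $Q_{r'}(x_i)$ has increment variance $\lesssim r'/r$ by Lemma~\ref{lemma:diff_Delta_squared_modified}; Lemma~\ref{lemma:dudley_modified} combined with Borell-Sudakov-Tsirelson concentration then makes this oscillation $O(1)$ except with small probability. A parallel analysis inside each $B_i$ controls $\psi_i$. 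Union-bounding over the $m$ boxes and tuning the intermediate scale $\rho$ yields $\bbP(G^c) \le e^{-(l/r)/C}$, completing the bound.

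The main obstacle I anticipate is matching the exponent $l/r$ precisely in $\bbP(G^c)$: the oscillation analysis alone naturally produces rates like $r/r'$, whereas the target rate $l/r$ reflects a different feature of the configuration -- the separation $l$ between distinct $x_i$'s. Reconciling these requires either choosing $\rho$ so that the coarse-field variances scale correctly, or working with a joint Gaussian concentration argument across all $m$ points exploiting that pairwise correlations of $\phi$ at the $x_i$ decay polynomially in $l/\rho$. This scale-tuning, together with the simultaneous handling of the two levels of coarse field ($\phi$ and $\psi_i$), is the delicate technical piece.
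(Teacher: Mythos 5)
Your proposal correctly identifies the Gibbs--Markov decomposition and the Chernoff-type argument that produces the $e^{-m/C}$ term, and you are candid that the $e^{-(l/r)/C}$ term is the ``delicate technical piece.'' Unfortunately, the route you sketch for that term has a genuine gap, and the candidate fixes you mention will not close it.

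The problem is your good event $G := \{|S_i| \le K\ \text{for all}\ i\}$ with $K$ an \emph{absolute} constant, paired with the claim $\bbP(G^\complement) \le e^{-(l/r)/C}$. First, your variance accounting is off: your $S_i$ absorbs two layers, $\phi$ (from $(\cup B_i)^\complement$ into $B_i$) and $\psi_i$ (from $B_i \setminus Q_{r'}(x_i)$ into $Q_{r'}(x_i)$). While $\var(\phi(x_i)) = O(1)$ is plausible with $\rho \asymp r$, the inner binding field satisfies $\var(\psi_i(x_i)) \asymp g\log(\rho/r') \asymp u$, since $r'/\rho \asymp e^{-cu\eta}$. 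So $\var(S_i) = \Theta(u)$, not $O(1)$, and $\bbP(|S_i| > K)$ is a fixed constant (indeed close to one) for any absolute $K$. Even if one allowed $K \asymp \sqrt{u\log m}\asymp u$ so that a union bound over $m = e^{\Theta(u)}$ boxes becomes affordable, the per-box factor $q = \bbP(\min_{Q_{r'}(x_i)} h^{Q_{r'}(x_i)} \ge -m_{r'}+C-K)$ then loses its meaning: once $K$ may exceed $C$, $q$ is no longer strictly below $1$, and $q^m$ collapses. So your scheme cannot deliver an absolute-constant $C$ in the lemma's statement, and no choice of the intermediate scale $\rho$ avoids this --- the $\Theta(u)$ variance lives in the ratio $\rho/r'$, which is forced to be $e^{\Theta(u)}$.

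The missing ingredient is not a concentration estimate but a Slepian-type comparison: the paper (following Ding's Corollary~2.9, quoted here as Lemma~\ref{lemma:Slepian}) controls $\bbP(\min_i \phi'(u_i) \ge 0)$ by $e^{-1/(2\rho_{\max})} + (9/10)^n$, where $\rho_{\max}$ bounds the pairwise correlation coefficients of the coarse field at the witnessing points. Because the $x_i$ are $l$-separated, those correlations are $O(r/l)$ even though the variances are $\Theta(u)$, and plugging this in directly produces the $e^{-(l/r)/C}$ term. That is what the paper's proof does: first Chernoff on the \emph{fine}-scale fields $g^Q$ to find $m/8$ boxes whose inner minima beat $-m_{r'}+C$ (this gives $e^{-m/C}$), then Slepian on the coarse field $\phi'$ at those witnessing points (this gives $e^{-(l/r)/C} + e^{-m/C}$). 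Your proposal has the independence/Chernoff half, but in place of Slepian it tries to make the coarse field uniformly bounded, which --- given its $\Theta(u)$ variance --- is not a sub-constant-probability event, let alone a doubly-exponentially rare one.
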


The argument for this lemma is basically \cite[Section 2.4]{Ding2013} adapted to domain with more general shapes. For sake of completeness, we give a proof of this fact in Subsection \ref{s:proof_Ding_argument}. Let us now see how the two preceding results lead to the upper bound in Theorem \ref{main_tail_bound}.

\begin{proof}[Proof of Theorem \ref{main_tail_bound}, upper bound] Fix a realization of $\Delta$ and points $x_1, x_2, \cdots, x_m$ in $\partial^{\text{in}} V_{N,r}$ as in Lemma \ref{SS-dilution}. Denote $\cQ_{r'} = \{Q_{r'}(x_i): 1\leq i\leq m\}$. By independence of $\Delta$ and $h'$, we can apply Lemma \ref{lemma:Ding_argument} to $h'$ \emph{conditioned on this realization of} $\Delta$ and ensure that
	\begin{equation}
		\min_{x\in \cup \cQ_{r'}} h'(x)  \leq -m_{r'} + C \overset{\eqref{eq:center_max}}{\leq} -m_N + 2\sqrt{2/\pi}\cdot cu(1+\eta) + C'
	\end{equation}
	with complementary probability at most 
	\begin{equation}
		e^{-(l/r)/C} + 2 e^{-m/C} \leq e^{- e^{cu\delta}/C} + 2 e^{-e^{cu(1/2-\delta)}/C}.
	\end{equation}
	Set $\delta = 1/4$, and $c (1+\eta) = 1/(2\sqrt{2/\pi})$, we conclude that
	\begin{equation}\label{event_min_h'_boxes_applied}
		\bbP\left(\min_{x\in \cup \cQ_{r'}} h'(x)  \geq -m_N + u \right) \leq  3 e^{-e^{c u/4}/C}. 
	\end{equation}
	Set $\eta = 1/4$, then together with Lemma \ref{SS-dilution}, we obtain overall
	\begin{equation}
		\bbP\left(\min_{x\in V_N} \Delta(x) \oplus h'(x)  \geq -m_N + u\right) \leq  4 e^{-e^{cu/4}/C}
	\end{equation}
	where $c = 4/5 \times 1/(2\sqrt{2/\pi}) = 4\cdot \sqrt{2\pi}/20$.
\end{proof}

\subsection{Proof of Proposition \ref{SS-dilution}}

The key to prove Proposition \ref{SS-dilution} is to show that an ``average'' of the $\Delta$ field on $\partial^{\text{in}} V_{N,r}$ is small. Precisely, 

\begin{lemma}\label{variance_average_Delta} There exists a probability measure $\gamma$ on $\partial^{\text{in}} V_{N,r}$ satisfying
	\begin{equation}\label{eq:assumption_for_gamma}
		\forall x\in \mathbb{Z}^2: \gamma(Q_l(x)) \lesssim l/N
	\end{equation}
	such that the random variable 
	\begin{equation}
		\overline \Delta := \sum_{x\in \partial^{\text{in}} V_{N,r}} \gamma(x) \Delta(x)
	\end{equation} 
	is Gaussian with $\var(\overline \Delta) \lesssim e^{-cu}$. 
\end{lemma}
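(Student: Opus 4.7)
The plan is to define $\gamma(y) := L\psi^r_N(y)/(2\operatorname{Cap}^{W_N}(V_{N,r}))$ for $y \in \partial^{\text{in}} V_{N,r}$, where $\psi^r_N$ is the capacitor of $V_{N,r}$ in $W_N$, analogous to $\psi_N$ in~\eqref{def_capacity}. Since $\psi^r_N$ is harmonic off $V_{N,r}$ and identically $1$ there, $L\psi^r_N$ is supported on $\partial^{\text{in}} V_{N,r}$ with total mass $\langle \psi^r_N, L\psi^r_N\rangle = 2\operatorname{Cap}^{W_N}(V_{N,r})$, so $\gamma$ is indeed a probability measure. The spread property $\gamma(Q_l(x)) \lesssim l/N$ will follow from a Beurling-type bound $L\psi^r_N(y) \lesssim 1/N$ uniformly on $\partial^{\text{in}} V_{N,r}$ (using the $C^2$ smoothness of $\partial V$ and $r < \kappa_{\max}^{-1}$ via Lemma~\ref{lemma:escape_probability_general}), combined with $|Q_l(x) \cap \partial^{\text{in}} V_{N,r}| \lesssim l$ from boundary smoothness and $\operatorname{Cap}^{W_N}(V_{N,r}) \asymp 1$ (analog of Lemma~\ref{capacity_bound} for $V_{N,r}$).

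For the variance bound, set $\sigma^r := (2\operatorname{Cap}^{W_N}(V_{N,r}))^{-1/2}$. Since $\operatorname{supp} L\psi^r_N \subset V_N^-$, we can write $\overline\phi := \sum_y \gamma(y)\phi(y) = (\sigma^r)^2 \langle \phi, L\psi^r_N\rangle$, a Gaussian linear functional of $\phi$ alone, so $\overline\Delta = \overline\phi \,|\, Z_h=0$ is Gaussian with $\var(\overline\Delta) = \var(\overline\phi) - \cov(\overline\phi, Z_h)^2$. Using the Gibbs--Markov identity $\cov(\phi) = G_{W_N} - G_{V_N^-}$, combined with $G_{W_N}L\psi^r_N = \psi^r_N$, which equals $1$ on $\partial^{\text{in}} V_{N,r}$:
\begin{equation*}
\var(\overline\phi) = (\sigma^r)^2 - (\sigma^r)^4\bigl\langle L\psi^r_N,\, G_{V_N^-}\,L\psi^r_N\bigr\rangle \le (\sigma^r)^2.
\end{equation*}
Analogously, since $G_{V_N^-}(\cdot, z)=0$ for $z \in \partial^{\text{in}} V_N \subset (V_N^-)^c$, only the $G_{W_N}$-piece survives the cross-term, yielding
\begin{equation*}
\cov(\overline\phi, Z_h) = (\sigma^r)^2\sigma\bigl\langle L\psi^r_N,\, \psi_N\bigr\rangle = (\sigma^r)^2\sigma\cdot 2\operatorname{Cap}^{W_N}(V_{N,r}) = \sigma.
\end{equation*}
Thus $\var(\overline\Delta) \le (\sigma^r)^2 - \sigma^2 = \dfrac{\operatorname{Cap}^{W_N}(V_N)-\operatorname{Cap}^{W_N}(V_{N,r})}{2\operatorname{Cap}^{W_N}(V_N)\operatorname{Cap}^{W_N}(V_{N,r})}$.

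It remains to show $\operatorname{Cap}^{W_N}(V_N) - \operatorname{Cap}^{W_N}(V_{N,r}) \lesssim r/N = e^{-cu}$. Starting from $2\operatorname{Cap}^{W_N}(V_{N,r}) = \langle \psi_N, L\psi^r_N\rangle = \langle L\psi_N, \psi^r_N\rangle$ (using $\psi_N \equiv 1$ on $\operatorname{supp} L\psi^r_N$ and self-adjointness of $L$), this rearranges to
\begin{equation*}
\operatorname{Cap}^{W_N}(V_N) - \operatorname{Cap}^{W_N}(V_{N,r}) = \tfrac12\sum_{z \in \partial^{\text{in}} V_N} L\psi_N(z)\bigl(1 - \psi^r_N(z)\bigr).
\end{equation*}
Since $1 - \psi^r_N(z) \lesssim r/N$ uniformly on $\partial^{\text{in}} V_N$ (the probability that a walk from $z$ hits $W_N^c$ before the adjacent $V_{N,r}$ at distance $\asymp r$ behaves linearly in the strip-like local geometry near the smooth boundary, via Lemma~\ref{lemma:escape_probability_general}), while $\sum_z L\psi_N(z) = 2\operatorname{Cap}^{W_N}(V_N) \asymp 1$, the gap is $\lesssim r/N$ and the lemma follows. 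The main technical obstacle will be producing clean quantitative proofs of the two boundary estimates $L\psi^r_N(y) \lesssim 1/N$ and $1 - \psi^r_N(z) \lesssim r/N$; both should reduce, via Lemma~\ref{lemma:escape_probability_general} applied to the domain and its shifted copy, to exit-probability estimates for random walks in a strip near a $C^2$-regular curve.
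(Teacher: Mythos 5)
Your proposal is correct and your measure $\gamma$ is \emph{identically} the one the paper uses: for $y\in\partial^{\text{in}}V_{N,r}$ one has $L\psi^r_N(y)=\tfrac14\sum_{z\sim y,\,z\notin V_{N,r}}(1-\psi^r_N(z))=\mathsf{P}^y(\tau_{W_N^\complement}<\tau^+_{V_{N,r}})$, which matches~\eqref{eq:gamma_random_walk}, and the spread property~\eqref{eq:assumption_for_gamma} is verified by the same combination of Lemma~\ref{lemma:escape_probability_general} and boundary smoothness. What differs is the route to the variance bound. The paper works probabilistically: it represents $\overline\phi$ as $\mathsf{E}^\gamma h(S_{\tau_{\partial^{\text{in}}V_N}})$, computes $\var(\overline\phi)$ by a last-visit decomposition of the Green function (Lemma~\ref{lem:variance_of_overline_phi}), and then uses time reversal to convert $\sigma^2$ into a matching escape-probability expression, so that $\var(\overline\Delta)$ becomes a difference of two exit probabilities. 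You instead work algebraically with the Gibbs--Markov identity $\cov(\phi)=G_{W_N}-G_{V_N^-}$ and $G_{W_N}L\psi^r_N=\psi^r_N$, drop the nonnegative $G_{V_N^-}$-term to get $\var(\overline\phi)\le(\sigma^r)^2$, compute $\cov(\overline\phi,Z_h)=\sigma$ because $L\psi_N$ is supported off $V_N^-$ (so the $G_{V_N^-}$ cross-term vanishes) and $\psi_N\equiv 1$ on $\operatorname{supp}L\psi^r_N$, and then bound $\operatorname{Cap}^{W_N}(V_N)-\operatorname{Cap}^{W_N}(V_{N,r})$. In fact the two bounds coincide numerically: using $\gamma^*(z)\propto L\psi_N(z)$, one checks $(\sigma^r)^2-\sigma^2=\mathsf{P}^{\gamma^*}(\tau_{W_N^\complement}<\tau_{V_{N,r}})/(2\operatorname{Cap}^{W_N}(V_{N,r}))$, which is exactly the first (dominant) term in the paper's difference after it drops the subtracted $\mathsf{P}^{\gamma\Pi}$-term. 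Your presentation avoids the time-reversal and last-visit arguments at the cost of producing a one-sided inequality rather than the exact variance formula, which is perfectly adequate here. The two remaining estimates you flag, $L\psi^r_N\lesssim 1/N$ and $1-\psi^r_N\lesssim r/N$ on the respective boundary layers, are precisely what the paper extracts from Lemma~\ref{lemma:escape_probability_general} with $U=W_N\setminus V_{N,r}$, $E=\partial^{\text{in}}V_{N,r}$, $D=\partial^{\text{out}}W_N$, $\Gamma=NV^{r/N}$, so that part is not a gap but a direct application of the available lemma.
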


We postpone the proof of this key Lemma to Section \ref{s:variance_average_Delta}. In addition, we need to control the \emph{oscillation} of $\Delta$ in boxes $Q_{r'}(x)$ for any given $x\in V_{N, r}$.

\begin{lemma}\label{oscillation_Delta} There exists constant $C < \infty$ such that for all $x \in V_{N, r}$ and all $t \geq 1$,
	\begin{equation}\label{eq:Borell_TIS_applied}
		\mathbb{P}\left(\sup_{y\in Q_{r'}(x)} | \Delta (y)-\Delta(x)| > t \right)\leq C \exp\left(-t^2 e^{cu\eta}/C\right).
	\end{equation}
\end{lemma}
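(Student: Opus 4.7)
The plan is to treat $\Delta$ as a centered Gaussian field and apply the Borell--TIS inequality to $\sup_{y \in Q_{r'}(x)} |\Delta(y) - \Delta(x)|$, after first controlling the variance of individual increments and the expected supremum.

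The key observation is that $\Delta = (\phi \mid Z_h = 0)$ is centered Gaussian: $\phi$ is a linear functional of the centered Gaussian $h$, so $(\phi, Z_h)$ is jointly centered Gaussian, and conditioning on a centered Gaussian being zero preserves centering. Moreover, the conditional variance formula for jointly Gaussian vectors gives
\[
\Var(\Delta(y) - \Delta(z)) = \Var(\phi(y) - \phi(z)) - \frac{\Cov(\phi(y)-\phi(z), Z_h)^2}{\Var(Z_h)} \le \Var(\phi(y) - \phi(z)),
\]
so variance bounds on the increments of $\phi$ transfer directly to those of $\Delta$.

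For the increment bound I would apply Lemma~\ref{lemma:diff_Delta_squared_modified} with $U = Q_{r'}(x)$, $V = V_N^-$, $W = W_N$. Since $x \in V_{N,r}$ we have $\dist(x, V_N^\complement) \gtrsim r$, and because $r' \ll r$ (as $\eta > 0$), the box $Q_{r'}(x)$ sits well inside $V_N^-$ with $\dist(Q_{r'}(x), (V_N^-)^\complement) \gtrsim r$. The lemma then gives $\Var(\Delta(y) - \Delta(z)) \lesssim \|y-z\|_2 / r$ uniformly for $y, z \in Q_{r'}(x)$; in particular $\sigma_*^2 := \sup_y \Var(\Delta(y) - \Delta(x)) \lesssim r'/r = e^{-cu\eta}$. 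Next, Lemma~\ref{lemma:dudley_modified} applied to the centered Gaussian field $y \mapsto \Delta(y) - \Delta(x)$ on $U = Q_{r'}(x)$ (with $L = C/r$, $\diam(U) \asymp r'$, and $c_U$ bounded below by an absolute constant since $U$ is a discrete square) yields
\[
M := \mathbb{E}\Bigl[\sup_{y \in Q_{r'}(x)} |\Delta(y) - \Delta(x)|\Bigr] \lesssim \sqrt{(1/r) \cdot r'} = e^{-cu\eta/2}.
\]

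The conclusion follows from the Borell--TIS inequality applied to this centered Gaussian field: $\mathbb{P}(\sup_y |\Delta(y) - \Delta(x)| > M + s) \le 2\exp(-s^2/(2\sigma_*^2))$ for all $s \ge 0$. For $t \ge 1$ and $u$ large enough that $M \le t/2$, taking $s = t - M \ge t/2$ produces a bound of the desired form $2\exp(-t^2 e^{cu\eta}/C')$; for bounded $u$, the factor $e^{cu\eta}$ is itself bounded so the claimed inequality holds trivially after enlarging $C$. The main obstacle is really just the geometric bookkeeping, namely verifying that the chosen scales leave $Q_{r'}(x)$ safely inside $V_N^-$ so that the increment variance decays as $r'/r$ rather than merely as $r'$, together with the simple but essential observation that conditioning on the centered Gaussian $Z_h = 0$ cannot inflate variances. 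Once these are in hand, the bound is a standard combination of Lemmas~\ref{lemma:diff_Delta_squared_modified}, \ref{lemma:dudley_modified}, and Borell--TIS.
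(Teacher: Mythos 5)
Your proposal is correct and follows essentially the same route as the paper: bound the increment variances of $\Delta$ via Lemma~\ref{lemma:diff_Delta_squared_modified} applied with $W = W_N$, $V = V_N^-$, $U = Q_{r'}(x)$ (noting that conditioning on the centered Gaussian $Z_h$ cannot increase variances), bound the expected oscillation via Lemma~\ref{lemma:dudley_modified}, and then apply Borell--TIS. You spell out the conditional-variance inequality and the final case analysis in $t$ a bit more explicitly, but there is no substantive difference from the paper's argument.
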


\begin{proof}[Proof of Lemma \ref{oscillation_Delta}] Given $x \in V_{N, r}$, set $\mu :=\mathbb{E}\left[\sup_{y\in Q_{r'}(x)} | \Delta (x)-\Delta(y)|\right]$ and $\sigma^2:= \sup_{y\in Q_{r'}(x)} \mathrm{Var}[\Delta(x)-\Delta(y)]$. By Borell-TIS,
	\begin{equation}\label{eq:Borell_TIS_for_boxes}
		\mathbb{P}\left( \left|\sup_{y\in Q_{r'}(x)} | \Delta (x)-\Delta(y)| -\mu\right| > t \right)\leq 2 \exp\left(-\frac{t^2}{2\sigma^2}\right).
	\end{equation}
	
	Applying Lemma \ref{lemma:diff_Delta_squared_modified} with $W = W_N, V= V_N^-$, and $U = Q_{r'}(x)$, together with the fact that conditioning on $Z_h = 0$ \emph{only reduces variance} of $\phi^{W_N, V_N^-}$, we get $\sigma^2 \lesssim r'/r = \frac{Ne^{-cu(1+\eta)}}{Ne^{-cu}} = e^{-cu\eta}$. In this case, \eqref{eq:dudley_assumption_2} holds with $L \asymp 1/r$ and $c_U \gtrsim 1$, so by Lemma \ref{lemma:dudley_modified} $\mu \lesssim \sqrt{\frac{r'}{r}} = e^{-cu\eta/2} \leq 1$. Plugging the bounds for $\mu$ and $\sigma^2$ into \eqref{eq:Borell_TIS_for_boxes}, we obtain that for some absolute constant $C < \infty$, it holds for all $t \geq 1$ that
	\begin{equation}\label{eq:Borell_TIS_for_boxes_applied}
		\mathbb{P}\left(\sup_{y\in Q_{r'}(x)} | \Delta (x)-\Delta(y)| > t \right)\leq C \exp\left(-t^2 e^{cu\eta}/C\right).
	\end{equation}
\end{proof}

We conclude the present section by proving Lemma \ref{SS-dilution} using these two ingredients.

\begin{proof}[Proof of Lemma \ref{SS-dilution}]  
By Lemma \ref{variance_average_Delta}, with complementary probability at most $e^{-e^{cu}/C}$
	\begin{equation}\label{event_average_Delta_small}
	\sum_{x\in \partial^{\text{in}} V_{N,r}} \gamma(x) \Delta(x) \leq 1.
	\end{equation}
	On this event, we have 
	\begin{align}
		\sum_{x\in \partial^{\text{in}} V_{N,r}: \Delta(x)-2\leq 0} \gamma(x) (\Delta(x)-2) &\leq -1 \\
		\left(\min_{x\in \partial^{\text{in}} V_{N,r}} \Delta(x)-2 \right) \times 
		\gamma\Big(x\in \partial^{\text{in}} V_{N,r}: \Delta(x)-2 \leq 0\Big)&\leq - 1.
	\end{align}
	Let $S = \{x\in \partial^{\text{in}} V_{N,r}: \Delta(x) \leq 2\}$. The inequality above implies either
	\begin{enumerate}[label=(\textbf{\roman*})]
		\item there exists some $x\in \partial^{\text{in}} V_{N,r}$ with $\Delta(x)-2 \leq -e^{cu/2}$, or
		\item $\gamma(S) \geq e^{-cu/2}$.
	\end{enumerate}
By a Union Bound \emph{over boxes} of side length $r = Ne^{-cu}$, the event \textbf{(i)} occurs with probability at most 
\begin{equation}
 (C e^{cu})^2 \times 2e^{-e^{cu}/(Cu)} \leq e^{-e^{cu}/(C'u)}
\end{equation}
since, by Lemma \ref{lemma:magnitude_Delta}, $\var(\Delta(x)) = O(u)$ for $x \in V_{N,r}$ and, by Lemma \ref{oscillation_Delta} with $\eta = 0$, the \emph{oscillation} of $\Delta$ is $O(1)$ sub-Gaussian in a $r \times r$ box centered at $x$. Together with \eqref{event_average_Delta_small}, this means the event \textbf{(ii)} occurs with complementary probability at most $e^{-e^{cu}/(C'u)}$. 

Now, let $\cQ_l = \{Q_l(y): y \in (l\mathbb{Z})^2, Q_l(y) \cap S \neq \emptyset\}$ be the collection of $l \times l$ boxes intersecting $S$. Then $S \subset \bigcup \cQ_l$. In the event \textbf{(ii)}, we have
\begin{equation}
|\cQ_l| \geq \frac{\sum_{Q \in \cQ_l} \gamma(Q)}{\max_{Q \in \cQ_l} \gamma(Q)} \gtrsim \frac{\gamma(S)}{l/N} \geq \frac{e^{-cu/2}}{ e^{-cu(1-\delta)}} = e^{cu (1/2-\delta)}
\end{equation}
where we used \eqref{eq:assumption_for_gamma} to bound the denominator. Moreover, by identifying $\cQ_l \subset (l\mathbb{Z})^2$, we can partition it into $4$ subcollections by the quotient operation $(l\mathbb{Z})^2/(2l\mathbb{Z})^2 \simeq (\mathbb{Z}/2\mathbb{Z})^2$. Then one of these subcollections will have at least $1/4$ of the boxes in $\cQ_l$. This means we can find points $x_1, x_2, \cdots, x_m \in S$ at distance at least $l$ from each other, with $m \geq |\cQ_l|/4 \gtrsim e^{cu (1/2-\delta)}$. 

Finally, by Lemma \ref{oscillation_Delta} with $\eta \geq 1$ and Union Bound, we can also ensure the oscillation of $\Delta$ is bounded by $1$ in every $r'\times r'$ box intersecting $V_{N,r}$, with complementary probability at most
\begin{equation}\label{event_oscillation_Delta_bounded}
	(C e^{cu(1+\eta)})^2 \times e^{-e^{cu\eta}/C} \leq e^{-e^{cu}/C'}.
\end{equation}
On this event, $\Delta \leq 2 + 1 = 3$ on $\bigcup_{i=1}^m Q_{r'}(x_i)$.

\end{proof}

\subsection{Proof of Lemma \ref{variance_average_Delta}}\label{s:variance_average_Delta}
To prove this Lemma, we first rewrite the average $\overline \Delta$ in a different way. Recall $\Delta (x) = (\phi(x) | Z_h = 0)$. Define
\begin{align}
	\overline \phi &:=\sum_{x\in \partial^{\text{in}} V_{N,r}} \gamma(x) \phi(x), 
\end{align}
so $\overline \Delta = (\overline \phi|Z_h=0)$. By Gaussianity,
\begin{equation}
	\overline \Delta = \overline \phi - \bbE[\overline \phi|Z_h].
\end{equation}
Recall from \eqref{eq:decomposition_in_Dirichlet} that $\bbE[h(z)|Z_h] = \sigma Z_h$ in $V_N$. This implies that $\bbE[\phi(x)|Z_h] = \sigma Z_h$ in $V_N$, because the \emph{binding field} $\phi := \phi^{W_N, V_N^-}$ in $V_N$ is a \emph{harmonic average} of $h$ on $V_N\setminus V_N^- = \partial^{\text{in}} V_N$, i.e. for all $x \in V_N$,
\begin{equation}\label{eq:harmonicity_binding_field}
	\phi(x) = \mathsf{E}^x h(S_{\tau_{\partial^{\text{in}} V_N}}).
\end{equation}
Since $\overline \phi$ is further defined as a weighted average of $\phi$ in $V_N$, it follows that
\begin{equation}
	\bbE [\overline \phi|Z_h] = \sigma Z_h.
\end{equation}
Thus, $\overline \Delta = \overline \phi - \sigma Z_h$ and
\begin{align}\label{eq:var_overline_Delta_decomp}
	\var(\overline \Delta) = \var(\overline \phi) - \var(\sigma Z_h)
\end{align}
where $\var(\sigma Z_h) = \sigma^2 = (2\operatorname{Cap}^{W_N}{V_N})^{-1} = \Theta(1)$. Thus we want to choose $\gamma$ so that
\begin{equation}\label{eq:Lemma_3.3_goal}
    \var(\overline \phi) = (1+O(\tfrac{r}{N})) \var(\sigma Z_h).
\end{equation}

The following lemma gives a nice representation of $\sigma Z_h$ as a weighted average of $h$ on $\partial^{\text{in}} V_N$:

\begin{lemma}\label{lem:sigma_Z_representation} We have \begin{equation}
		\sigma Z_h = \sum_{z\in \partial^{\text{in}} V_N} \gamma^*(z) h(z)
	\end{equation}
where $\gamma^*$ is the probability measure on $\partial^{\text{in}} V_N$ defined by
	\begin{equation}\label{eq:gamma*_random_walk}
	\gamma^*(z) := \frac{\mathsf{P}^z(\tau_{W_N^\complement} < \tau^+_{V_N})}{2 \operatorname{Cap}^{W_N}(V_N)} = \mathsf{P}^{\operatorname{Unif}(\partial^{\text{out}} W_N)}(S_{\tau_{V_N} } = z | \tau_{V_N} < \tau^+_{W_N^\complement})
\end{equation}
where $\operatorname{Unif}(\partial^{\text{out}} W_N)$ is the uniform probability measure on $\partial^{\text{out}} W_N$.
\end{lemma}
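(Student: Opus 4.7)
The strategy is to unpack the definition of $\sigma Z_h$ in terms of $L\psi_N$, identify where this discrete Laplacian is supported, and then interpret the resulting weights probabilistically using the capacitor's connection to harmonic measure and the reversibility of simple random walk.

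First I would expand
\begin{equation}
\sigma Z_h = \sigma^2 \langle h, L\psi_N\rangle = \sigma^2 \sum_{x \in \mathbb{Z}^2} h(x)\, L\psi_N(x),
\end{equation}
and observe that $L\psi_N$ is supported on $\partial^{\text{in}} V_N$. Indeed, on $V_N^-$ every neighbor of $x$ lies in $V_N$ where $\psi_N \equiv 1$; on $W_N \setminus V_N$ the function $\psi_N$ is harmonic by definition of the capacitor; and on $W_N^\complement$ we use $\psi_N \equiv 0$ together with $h \equiv 0$. Hence only the contribution from $z \in \partial^{\text{in}} V_N$ survives, and $\sigma Z_h = \sigma^2 \sum_{z\in \partial^{\text{in}} V_N} h(z)\, L\psi_N(z)$.

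Next, for $z \in \partial^{\text{in}} V_N$ I would rewrite $L\psi_N(z) = \tfrac{1}{4}\sum_{y \sim z}(1 - \psi_N(y))$ (using $\psi_N(z)=1$), split the sum over neighbors according to whether $y \in V_N$ (contribution zero), $y \in W_N \setminus V_N$, or $y \in W_N^\complement$, and identify $\psi_N(y)$ for $y \in W_N \setminus V_N$ as the harmonic measure $\mathsf{P}^y(\tau_{V_N} < \tau_{W_N^\complement})$. A one-step Markov decomposition then gives
\begin{equation}
L\psi_N(z) = \mathsf{P}^z\bigl(\tau_{W_N^\complement} < \tau_{V_N}^+\bigr).
\end{equation}
Summing over $z\in \partial^{\text{in}} V_N$ recovers $\langle \psi_N, L\psi_N\rangle = 2\operatorname{Cap}^{W_N}(V_N)$, confirming that $\gamma^*$ as defined is a probability measure and establishing the first equality in \eqref{eq:gamma*_random_walk} (with $\sigma^2 = 1/(2\operatorname{Cap}^{W_N}(V_N))$).

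For the second equality I would invoke reversibility of simple random walk. Writing each event as a sum over nearest-neighbor paths weighted by $(1/4)^{|\gamma|}$ and reversing the orientation:
\begin{equation}
\mathsf{P}^{\operatorname{Unif}(\partial^{\text{out}} W_N)}\bigl(S_{\tau_{V_N}} = z,\, \tau_{V_N} < \tau_{W_N^\complement}^+\bigr)
= \frac{1}{|\partial^{\text{out}} W_N|}\, \mathsf{P}^z\bigl(\tau_{W_N^\complement} < \tau_{V_N}^+\bigr),
\end{equation}
since reversing a path from $x \in \partial^{\text{out}} W_N$ to $z$ staying in $W_N\setminus V_N$ until the last step produces precisely a path from $z$ to $x$ exiting through $\partial^{\text{out}} W_N$ without returning to $V_N$. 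Summing this identity over $z \in \partial^{\text{in}} V_N$ yields $\mathsf{P}^{\operatorname{Unif}(\partial^{\text{out}} W_N)}(\tau_{V_N} < \tau_{W_N^\complement}^+) = 2\operatorname{Cap}^{W_N}(V_N)/|\partial^{\text{out}} W_N|$, and dividing gives the claimed conditional probability. No step here is genuinely hard; the main point requiring care is the first-step decomposition at $z \in \partial^{\text{in}} V_N$, since the neighbors may split across $V_N$, $W_N \setminus V_N$, and $W_N^\complement$, and one must correctly identify that the contribution $y\in V_N$ vanishes because $\tau_{V_N}^+ = 1$ in that case, while $y \in W_N^\complement$ contributes the full $1$.
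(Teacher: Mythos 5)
Your proof is correct and follows essentially the same route as the paper: extract the representation $\sigma Z_h = \langle h, L\psi_N\rangle / \langle \psi_N, L\psi_N\rangle$, identify $L\psi_N(z) = \mathsf{P}^z(\tau_{W_N^\complement} < \tau^+_{V_N})$ on $\partial^{\text{in}} V_N$ via the hitting-probability interpretation of $\psi_N$, and use path reversal for the second identity. The paper compresses the first-step decomposition and the path-reversal computation into one-line remarks ("by interpreting the harmonic function $\psi_N(x)$ as the probability\dots" and "Time reversal then gives\dots"), so you have simply supplied the details they left implicit.
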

\begin{proof}[Proof of Lemma \ref{lem:sigma_Z_representation}] From the defining equation \eqref{eq:decomposition_in_Dirichlet}, we have 
	\begin{equation}
		\sigma Z_h = \frac{\langle h, L \psi_N\rangle}{\langle\psi_N, L \psi_N \rangle} = \frac{ \displaystyle \sum_{z\in \partial^{\text{in}} V_N} \mathsf{P}^z(\tau_{W_N^\complement} < \tau^+_{V_N}) h(z)}{ \displaystyle  \sum_{z\in \partial^{\text{in}} V_N} \mathsf{P}^z(\tau_{W_N^\complement} < \tau^+_{V_N})}
	\end{equation}
	by interpreting the harmonic function $\psi_N(x)$ as the probability of a simple random walk started at $x$ reaching $V_N$ before exiting $W_N$. Notice the denominator is exactly $\sigma^{-2} = 2 \operatorname{Cap}^{W_N}(V_N)$ and normalizes the coefficients of $h$ in the numerator to a probability measure. Time reversal then gives the interpretation \eqref{eq:gamma*_random_walk} of $\gamma^*$.
\end{proof}

Motivated by this representation of $\sigma Z_h$, we will choose 
\begin{equation}\label{eq:gamma_random_walk}
	\gamma(x) := \frac{\mathsf{P}^x(\tau_{W_N^\complement} < \tau^+_{V_{N,r}})}{2 \operatorname{Cap}^{W_N}(V_{N,r})} = \mathsf{P}^{\operatorname{Unif}(\partial^{\text{out}} W_N)}(S_{\tau_{V_{N,r}} } = x | \tau_{V_{N,r}} < \tau^+_{W_N^\complement}).
\end{equation}

\begin{lemma}\label{lem:check_assumption_for_gamma} The measure $\gamma$ defined by \eqref{eq:gamma_random_walk}  satisfies \eqref{eq:assumption_for_gamma}.
\end{lemma}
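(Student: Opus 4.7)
The plan is to use the numerator/denominator representation in \eqref{eq:gamma_random_walk} directly. Since $\gamma$ is supported on $\partial^{\text{in}} V_{N,r}$,
$$\gamma(Q_l(x)) = \frac{1}{2\operatorname{Cap}^{W_N}(V_{N,r})} \sum_{z \in Q_l(x) \cap \partial^{\text{in}} V_{N,r}} \mathsf{P}^z(\tau_{W_N^\complement} < \tau^+_{V_{N,r}}),$$
and it suffices to establish three ingredients: (i) $\operatorname{Cap}^{W_N}(V_{N,r}) \asymp 1$; (ii) $\mathsf{P}^z(\tau_{W_N^\complement} < \tau^+_{V_{N,r}}) \lesssim 1/N$ for every $z \in \partial^{\text{in}} V_{N,r}$; and (iii) $|Q_l(x) \cap \partial^{\text{in}} V_{N,r}| \lesssim l$. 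These three bounds multiply to give the claim uniformly in $x$. Part (i) follows from exactly the same argument that proves Lemma~\ref{capacity_bound}, applied with $V_{N,r}$ in place of $V_N$; the hypotheses hold uniformly because $V^{r/N}$ remains compactly contained in $W$ for $r/N = e^{-cu}$ small, with diameter and distance to $\partial W$ both bounded away from $0$ and $\infty$.

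For (ii), I condition on the first step of the walk: for $z \in \partial^{\text{in}} V_{N,r}$,
$$\mathsf{P}^z(\tau_{W_N^\complement} < \tau^+_{V_{N,r}}) \leq \sum_{y \sim z,\; y \notin V_{N,r}} \tfrac{1}{4}\, \mathsf{P}^y(\tau_{W_N^\complement} < \tau_{V_{N,r}}),$$
and apply Lemma~\ref{lemma:escape_probability_general} to each summand, with $U := W_N \setminus V_{N,r}$, trap $E := \partial^{\text{in}} V_{N,r}$, escape region $D := \partial^{\text{out}} U \cap W_N^\complement$, and auxiliary open set $\Gamma := N V^{r/N}$. The discussion following \eqref{def_delta_interior} gives that $\partial \Gamma$ is $C^2$ with curvature $O(1/N)$ uniformly for $r/N$ small; moreover $E$ lies within $O(1)$ of $\partial \Gamma$ and $U \subset \Gamma^\complement$. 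Thus the hypotheses of Lemma~\ref{lemma:escape_probability_general} hold with $M, R = O(1)$, yielding
$$\mathsf{P}^y(\tau_D < \tau_E) \lesssim \frac{\dist(y,E)}{\dist(y,D)} \lesssim \frac{1}{N},$$
since $\dist(y,E) \leq 1$ (as $y \sim z \in E$) and $\dist(y, W_N^\complement) \gtrsim N$ (as $V$ is compactly contained in $W$, so that $\dist(V_{N,r}, W_N^\complement) = \Theta(N)$).

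For (iii), the uniform $C^2$ bound on $\kappa_{\partial V^{r/N}}$ implies that $\partial(N V^{r/N})$ is a Jordan curve with curvature $O(1/N)$, whose intersection with any ball of radius $l$ has arclength $O(l)$ with constant depending only on $V$. Since $\partial^{\text{in}} V_{N,r}$ lies within $O(1)$ of this curve and has $1$-dimensional lattice density, $|Q_l(x) \cap \partial^{\text{in}} V_{N,r}| \lesssim l$ (trivially extending to $l = \Theta(N)$ via $|\partial^{\text{in}} V_{N,r}| = \Theta(N)$). The main care in the proof goes into verifying the hypotheses of Lemma~\ref{lemma:escape_probability_general} with constants independent of $u$, but this reduces to the uniform $C^2$ bound on $\partial V^\delta$ for $\delta < \kappa_{\max}^{-1}$, which is automatic since $r/N = e^{-cu}$ is much smaller than $\kappa_{\max}^{-1}$ throughout the admissible range of $u$.
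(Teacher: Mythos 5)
Your proof is correct and follows the same route as the paper's: bound $\operatorname{Cap}^{W_N}(V_{N,r})$ from below by Lemma~\ref{capacity_bound}, bound each escape probability $\mathsf{P}^z(\tau_{W_N^\complement} < \tau^+_{V_{N,r}})$ by $O(1/N)$ via a first-step decomposition and Lemma~\ref{lemma:escape_probability_general} with $U = W_N\setminus V_{N,r}$, $E = \partial^{\text{in}} V_{N,r}$, $D = \partial^{\text{out}} W_N$, $\Gamma = N V^{r/N}$, and bound $|\partial^{\text{in}} V_{N,r}\cap Q_l(x)| \lesssim l$ using the uniform $C^2$ regularity of $\partial V^{r/N}$. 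Your added remarks on uniformity in $u$ of the constants are a reasonable elaboration of what the paper leaves implicit.
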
 
\begin{proof}[Proof of Lemma \ref{lem:check_assumption_for_gamma}]
	Recall that $V_{N,r} := N V^{r/N} \cap \mathbb{Z}^2$ where 
	\begin{equation}
		V^{r/N} := \{x\in V: \dist(x, \partial V) > r/N\}.
	\end{equation}
	As we work in the regime $u \gg 1$, $r/N = e^{-cu} \ll 1$, so $\operatorname{Cap}^{W_N}(V_{N,r}) \asymp 1$ by Lemma \ref{capacity_bound} applied to $V_{N,r}$. Thus,
	\begin{equation}\label{eq:gamma_rewrite}
		\gamma(x) \asymp \mathsf{P}^x(\tau_{W_N^\complement} < \tau^+_{V_{N,r}}) = \frac{1}{4} \sum_{y\sim x, y\in \partial^{\text{out}} V_{N,r}} \mathsf{P}^y(\tau_{W_N^\complement} < \tau_{V_{N,r}}).
	\end{equation}
	Since $\partial V$ is $C^2$, $\partial V^{r/N}$ is also $C^2$ for $r/N \ll 1$, and of uniformly bounded curvature as explained in the second paragraph following \eqref{eq:kappa}. Thus we can apply Lemma \ref{lemma:escape_probability_general} (with $U = W_N \setminus V_{N,r}$, $E = \partial^{\text{in}} V_{N,r}$, $D = \partial^{\text{out}} W_N$, $\Gamma = N V^{r/N}$) at each of the (at most four) neighbors of $x$ in $\partial^{\text{out}} V_{N,r}$, giving an upper bound of a constant times $1/N$ in \eqref{eq:gamma_rewrite}. Since $\partial V^{r/N}$ is $C^2$, it also follows $|\partial^{\text{in}} V_{N,r} \cap Q_l(x)| \lesssim l$. Thus, summing up we get $\gamma(Q_l(x)) \lesssim \tfrac{l}{N}$, as required in \eqref{eq:assumption_for_gamma}. 
\end{proof}

It remains to show \eqref{eq:Lemma_3.3_goal} for this choice of $\gamma$. We begin with a \emph{random walk representation} of $\overline \phi$. As before, $(S_t: t\in \mathbb{N})$ denotes a simple random walk. By harmonicity of the binding field $\phi$, we have
\begin{equation}\label{eq:overline_phi_weighted_average}
	\overline \phi = \mathsf{E}^\gamma \, \phi(S_0)  \overset{\eqref{eq:harmonicity_binding_field}}{=} \mathsf{E}^\gamma [h(S_{\tau_{\partial^{\text{in}} V_N}})].
\end{equation}
Consequently, we have that for $y\in \partial^{\text{in}} V_N$,
\begin{equation}\label{eq:overline_phi_as_Green_function}
	\bbE[h(y) \cdot \overline \phi] = \bbE[h(y) \mathsf{E}^\gamma\, h(S_{\tau_{\partial^{\text{in}}  V_N}})] = \mathsf{E}^\gamma \, G_{W_N}(S_{\tau_{\partial^{\text{in}}  V_N}}, y) = \mathsf{E}^\gamma\, G_{W_N}(S_0, y)
\end{equation}
by harmonicity of the Green function $G_{W_N}(\cdot, y)$ in $W_N\setminus\{y\}$. 
\begin{lemma}\label{lem:variance_of_overline_phi} Denote by $\gamma \Pi$ the law of $S_{\tau_{\partial^{\text{in}}  V_N}}$ under $\mathsf{P}^\gamma$. Then
	\begin{equation}
		\var(\overline \phi) =  \frac{\mathsf{P}^{\gamma \Pi}(\tau_{V_{N,r}} < \tau_{W_N})}{2\operatorname{Cap}^{W_N}(V_{N,r})}.
	\end{equation}
\end{lemma}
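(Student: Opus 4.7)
The plan is to identify, for each $y\in \partial^{\text{in}} V_N$, the covariance $\bbE[\overline\phi \cdot h(y)]$ with $\tilde\sigma^2 \psi_{N,r}(y)$, where $\psi_{N,r}$ denotes the capacitor of $V_{N,r}$ in $W_N$ and $\tilde\sigma^2 := (2\operatorname{Cap}^{W_N}(V_{N,r}))^{-1}$. Once this is established, the lemma follows by summing against the hitting measure $\gamma\Pi$: by \eqref{eq:overline_phi_weighted_average} one has $\overline\phi = \sum_{y\in\partial^{\text{in}} V_N} (\gamma\Pi)(y) h(y)$, so
\[
\var(\overline\phi) = \sum_y (\gamma\Pi)(y)\, \bbE[\overline\phi \cdot h(y)] = \tilde\sigma^2\, \mathsf{E}^{\gamma\Pi}[\psi_{N,r}(S_0)] = \tilde\sigma^2\, \mathsf{P}^{\gamma\Pi}(\tau_{V_{N,r}} < \tau_{W_N}),
\]
using the random-walk representation $\psi_{N,r}(y) = \mathsf{P}^y(\tau_{V_{N,r}} < \tau_{W_N})$.

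For the covariance identification, the starting point is formula \eqref{eq:overline_phi_as_Green_function}, which gives $\bbE[\overline\phi\cdot h(y)] = \mathsf{E}^\gamma[G_{W_N}(S_{\tau_{\partial^{\text{in}} V_N}}, y)]$. For $y\in\partial^{\text{in}} V_N$ the function $x\mapsto G_{W_N}(x, y)$ is harmonic on $V_N^-$ (since $y\notin V_N^-$). Since $\gamma$ is supported on $\partial^{\text{in}} V_{N,r}\subset V_N^-$ and the walk is stopped precisely the first time it exits $V_N^-$, optional stopping collapses the expectation to
\[
\bbE[\overline\phi\cdot h(y)] = \sum_{x\in \partial^{\text{in}} V_{N,r}} \gamma(x)\, G_{W_N}(x, y), \qquad y\in\partial^{\text{in}} V_N.
\]

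To link the right-hand side to $\psi_{N,r}(y)$, I would rerun the computation in the proof of Lemma \ref{lem:sigma_Z_representation} with $V_N$ replaced by $V_{N,r}$. That yields $\gamma(x) = \tilde\sigma^2 L\psi_{N,r}(x)$, supported on $\partial^{\text{in}} V_{N,r}$. Since $\psi_{N,r}$ vanishes on $W_N^\complement$, the Green-function inversion $\psi_{N,r}(y) = \sum_x G_{W_N}(y, x) L \psi_{N,r}(x)$ (valid for $y\in W_N$), combined with the symmetry $G_{W_N}(x, y) = G_{W_N}(y, x)$, produces
\[
\sum_{x\in\partial^{\text{in}} V_{N,r}} \gamma(x)\, G_{W_N}(x, y) = \tilde\sigma^2\, \psi_{N,r}(y), \qquad y\in W_N,
\]
which combined with the previous display completes the covariance identification.

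The only delicate point in this plan is the optional-stopping / harmonicity step, which requires $\partial^{\text{in}} V_{N,r}\subset V_N^-$ so that the walk from $\gamma$ really does see $G_{W_N}(\cdot,y)$ as harmonic throughout its trajectory up to $\tau_{\partial^{\text{in}} V_N}$. This inclusion is automatic in the regime we work in, since $V_{N,r}$ is separated from $\partial V_N$ by $\Theta(r)$ with $r\geq 1$. Everything else is a mechanical unwinding of standard Green-function and capacitor identities, completely parallel to the proof of Lemma \ref{lem:sigma_Z_representation}.
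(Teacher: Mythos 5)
Your proof is correct, and it arrives at the same intermediate identity as the paper, namely that
\[
\sum_{x\in\partial^{\text{in}} V_{N,r}} \gamma(x)\,G_{W_N}(x,y) \;=\; \frac{\mathsf{P}^y(\tau_{V_{N,r}}<\tau_{W_N^\complement})}{2\operatorname{Cap}^{W_N}(V_{N,r})} \qquad \text{for all } y\in W_N,
\]
but by a different route. The paper proves this identity probabilistically: it expands $G_{W_N}(y,x)$ as a sum over time and recognizes $\mathsf{P}^y(S_n=x,\,\tau_{W_N^\complement}>n)\,\mathsf{P}^x(\tau_{W_N^\complement}<\tau^+_{V_{N,r}})$ as $\mathsf{P}^y(T=n,\,S_T=x)$ for the last-exit time $T$ of $V_{N,r}$ before leaving $W_N$, then sums. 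You instead identify $\gamma=\tilde\sigma^2\,L\psi_{N,r}$ and invoke the algebraic inversion $\sum_x G_{W_N}(\cdot,x)\,L\psi_{N,r}(x)=\psi_{N,r}(\cdot)$ on $W_N$, which is the potential-theoretic counterpart of the same fact. Both are legitimate and elementary; your version imports the Green-function inversion as a known identity, while the paper's last-exit argument effectively rederives it on the spot, which keeps the proof self-contained. One small remark: your ``optional stopping collapses'' step is already contained in \eqref{eq:overline_phi_as_Green_function} (the paper's harmonicity observation there), so you may simply quote that line; and the $\tau_{W_N}$ in the statement is a typo for $\tau_{W_N^\complement}$, as your derivation (and the paper's proof) makes clear.
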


\begin{proof}[Proof of Lemma \ref{lem:variance_of_overline_phi}] Let $T$ be the \emph{last time} the simple random walk $(S_t: t\in \mathbb{N})$ visits $V_{N,r}$ before exiting $W_N$, and we define $T = \infty$ if this never happens. For any $y \in W_N$,
	\begin{align}
		\mathsf{E}^\gamma \, G_{W_N}(y, S_0) &= \sum_{x\in \partial^{\text{in}} V_{N,r}} G_{W_N}(y, x) \gamma(x) \\
		&= \sum_{x\in \partial^{\text{in}} V_{N,r}} \mathsf{E}^y \left[\sum_{n=0}^\infty 1_{\{S_n = x\}} 1_{ \{ \tau_{W_N^\complement} > n\}}\right]  \times \frac{ \mathsf{P}^x(\tau_{W_N^\complement} < \tau^+_{V_{N,r}})}{\sigma_{N,r}^{-2}} \\
		&= \sigma_{N,r}^{2} \sum_{x\in \partial^{\text{in}} V_{N,r}}  \sum_{n=0}^\infty  \mathsf{P}^y (S_n = x, \tau_{W_N^\complement} > n) \mathsf{P}^x(\tau_{W_N^\complement} < \tau^+_{V_{N,r}}) \\
		&= \sigma_{N,r}^{2} \sum_{x\in \partial^{\text{in}} V_{N,r}}  \sum_{n=0}^\infty  \mathsf{P}^y (T = n, S_T = x) \\
		& = \frac{\mathsf{P}^y(\tau_{V_{N,r}} < \tau_{W_N})}{2 \operatorname{Cap}^{W_N}(V_{N,r})}.
	\end{align}
	where $\sigma_{N,r}^{-2} = 2 \operatorname{Cap}^{W_N}(V_{N,r})$. Plugging this into \eqref{eq:overline_phi_as_Green_function} and then \eqref{eq:overline_phi_weighted_average} gives the claim.
\end{proof}

By the preceding lemma and \eqref{eq:var_overline_Delta_decomp}, we have
\begin{equation}
	\var(\overline \Delta) = \frac{\mathsf{P}^{\gamma \Pi}(\tau_{V_{N,r}} < \tau_{W_N})}{2 \operatorname{Cap}^{W_N}(V_{N,r})} - \frac{1}{2\operatorname{Cap}^{W_N}(V_N)}.
\end{equation}
Now we observe that by time reversal
\begin{align}
	2 \operatorname{Cap}^{W_N}(V_{N,r}) &= \displaystyle  \sum_{x\in \partial^{\text{in}} V_{N,r}} \mathsf{P}^x(\tau_{W_N^\complement} < \tau^+_{V_{N,r}}) \\
	&= |\partial^{\text{out}} W_N| \cdot \mathsf{P}^{\operatorname{Unif}(\partial^{\text{out}} W_N)}(\tau_{V_{N,r}} < \tau^+_{W_N^\complement}) \\
	&= |\partial^{\text{out}} W_N| \cdot  \mathsf{P}^{\operatorname{Unif}(\partial^{\text{out}} W_N)}(\tau_{V_N} < \tau^+_{W_N^\complement}) \\ & \quad \cdot \mathsf{P}^{\operatorname{Unif}(\partial^{\text{out}} W_N)}\big(\tau_{V_{N,r}} < \tau^+_{W_N^\complement}\big|\tau_{V_N} < \tau^+_{W_N^\complement}\big) \\
	&\overset{\eqref{eq:gamma*_random_walk}}{=} 2 \operatorname{Cap}^{W_N}(V_{N}) \, \mathsf{P}^{\gamma^*}(\tau_{V_{N,r}} < \tau^+_{W_N^\complement}).
\end{align}
Thus we have
\begin{align}
	\var(\overline \Delta) &= \frac{\mathsf{P}^{\gamma \Pi}(\tau_{V_{N,r}} < \tau_{W_N^\complement}) - \mathsf{P}^{\gamma^*}(\tau_{V_{N,r}} < \tau_{W_N})}{2 \operatorname{Cap}^{W_N}(V_{N,r})} \\
	&= \frac{\mathsf{P}^{\gamma^*}(\tau_{W_N^\complement} < \tau_{V_{N,r}}) - \mathsf{P}^{\gamma \Pi}(\tau_{W_N} < \tau_{V_{N,r}})}{2 \operatorname{Cap}^{W_N}(V_{N,r})} \lesssim \frac{r}{N}
\end{align}
by Lemma \ref{capacity_bound} and Lemma \ref{lemma:escape_probability_general} (assumptions are satisfied arguing as in the second paragraph following \eqref{eq:kappa} with $U = W_N \setminus V_{N,r}$, $E = \partial^{\text{in}} V_{N,r}$, $D = \partial^{\text{out}} W_N$, and $\Gamma = N V^{r/N}$.

\subsection{Proof of Lemma \ref{lemma:Ding_argument}}\label{s:proof_Ding_argument}

Let $\cC_{8r'}(x)$ be the discrete Euclidean ball of radius $8r'$ centered at $x$, as in \eqref{notation_discrete_ball}, and denote $\mathfrak{C} = \{\cC_{8r'}(x_i): 1\leq i\leq m\}$.  Analogous to \eqref{eq:binding_field_decomposition}, we have the decomposition
\begin{equation}\label{eq:binding_field_decomposition_h'}
	h'(x) = g^Q(x)\oplus \phi'(x) \quad \text{ for all } x \in Q\subset \cC \in \mathfrak{C}
\end{equation}
where $g^Q := h^{\mathcal{C}}|_Q$ is the restriction to $Q$ of $h^{\cC}$ (the DGFF on $\cC$ with zero boundary condition), $\{g^Q: Q\in \cQ_{r'}\}$ independent of each other and of $h'|_{(\cup \mathfrak{C})^\complement}$, and $\phi'$ is the harmonic extension of $h'|_{(\cup \mathfrak{C})^\complement}$ to $\cup \mathfrak{C}$. \medskip

Using the binding field decomposition $g^Q = h^Q \oplus \phi^{\cC, B}$, tightness of $h^Q$ around \eqref{eq:center_max} yields
\begin{equation}
	\bbP(\min g^Q \leq -m_{r'} + C)  \overset{\eqref{conditioning_zero_trick}}{\geq} \frac{1}{2} \bbP(\min h^Q \leq -m_{r'} + C) \geq \frac{1}{4}.
\end{equation}
for some absolute constant $C$. Let $W = \{Q\in \cQ_{r'}: \min g^Q \leq -m_{r'} + C\}$, then by Chernoff bound for Bernoulli r.v.'s
\begin{equation}\label{eq:Ding_eq_14}
	\bbP(|W| \leq m/8) \leq e^{-m/C}. 
\end{equation}
From now on we fix a realization of $\{g^Q: Q\in \cQ_{r'}\}$ for which $|W| > m/8$. In light of \eqref{eq:binding_field_decomposition_h'}, it suffices to prove the following lemma (cf.~\cite[Lemma 2.4]{Ding2013}):

\begin{lemma}\label{Ding_Lem_2.4} Let $U\subset \bigcup \cQ_{r'}$ such that $|U \cap Q| \leq 1$ for all $Q \in \cQ_{r'}$. Assume $|U| \geq m/8$. Then for some absolute constants $C, c > 0$
	\begin{equation}
		\bbP(\min_U \phi' \geq 0 ) \leq e^{-(l/r)/C} + e^{-m/C}.
	\end{equation}
\end{lemma}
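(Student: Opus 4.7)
My plan is to perform a further Gibbs--Markov decomposition of $\phi'$ at an intermediate scale between $r'$ and $l$, and exploit the resulting conditional independence together with a concentration argument for the coarse component.

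First I would observe that since the $x_i$'s are separated by $\geq l > 2r$, the balls $B_i := \cC_r(x_i)$ are pairwise disjoint; setting $B := \bigcup_i B_i$, the domain Markov property applied to $h'$ on $V_N^-$ versus $B$ gives $h'|_B = \chi + \eta$, where $\chi$ is the harmonic extension of $h'|_{B^\complement}$ inside $B$, and the $\eta|_{B_i}$ are independent DGFFs on $B_i$, mutually independent and independent of $\chi$. Since $\cC_{8r'}(x_i) \subset B_i$ (using $r' \ll r$), a further harmonic restriction yields $\phi'|_{\cC_{8r'}(x_i)} = \chi|_{\cC_{8r'}(x_i)} + \psi_i$, with the $\psi_i$'s mutually independent conditional on $\chi$. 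For the unique $y_i \in U \cap Q_{r'}(x_i)$ this gives $\phi'(y_i) = \chi(y_i) + \psi_i(y_i)$ and
\[
  \bbP\!\left(\min_U \phi' \geq 0 \;\middle|\; \chi\right) = \prod_{i} \Phi\!\left(\chi(y_i)/\sigma_i\right),
\]
where $\sigma_i^2 := \var \psi_i(y_i) \asymp g\log(r/r') \asymp u$ by Lemma~\ref{lemma:magnitude_Delta} applied inside $B_i$ with inner region $\cC_{8r'}(x_i)$. The same lemma, combined with standard Green's function estimates, would give $\var \chi(y_i) = G_{V_N^-}(y_i,y_i) - G_{B_i}(y_i,y_i) = O(1)$, since both terms behave like $g\log r + O(1)$ at a point of depth $\asymp r$ from the boundary.

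Next I would dichotomize on the event $\mathcal{E} := \{\#\{i : \chi(y_i) \leq T\} \geq |U|/2\}$ for an absolute constant $T > 0$. On $\mathcal{E}$, at least half of the factors satisfy $\Phi(\chi(y_i)/\sigma_i) \leq \Phi(T/\sigma_i) \leq \tfrac12 + o(1)$ (as $u \to \infty$), giving $\prod_i \Phi(\cdots) \leq e^{-|U|/C} \leq e^{-m/C}$. On $\mathcal{E}^\complement$, I would apply a concentration inequality to the Gaussian count $N_T := \#\{i : \chi(y_i) > T\}$ to get $\bbP(\mathcal{E}^\complement) \leq e^{-(l/r)/C}$, using $|U| \geq m/8 \gtrsim l/r$. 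This rests on the off-diagonal covariances $\Cov(\chi(y_i),\chi(y_j))$ decaying as $(r/|x_i - x_j|)^2$, which I would derive from the half-plane asymptotics of $G_{V_N^-}$ for two boundary-depth-$r$ points at separation $\gg r$, via Lemma~\ref{lemma:escape_probability_general} and the $C^2$ regularity of $\partial V$.

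The main obstacle will be establishing the double-exponential bound $\bbP(\mathcal{E}^\complement) \leq e^{-(l/r)/C}$. Since the indicator functions $\mathbf{1}\{\chi(y_i) > T\}$ are non-Lipschitz, Borell--TIS does not apply directly; instead I will leverage the sharp $(r/|x_i-x_j|)^2$ decay of covariances to treat the $\chi(y_i)$'s as nearly independent, either through a Hanson--Wright-type quadratic-form estimate on $N_T - \bbE N_T$, or through a coupling to a genuinely independent Gaussian vector followed by Bernstein concentration. The $C^2$ regularity of $\partial V$ enters crucially here through the escape-probability bound of Lemma~\ref{lemma:escape_probability_general}, which is what forces the covariances of $\chi$ along the smooth curve $\partial^{\text{in}} V_{N,r}$ to decay polynomially rather than merely logarithmically.
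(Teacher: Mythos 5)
Your approach is a genuine alternative to the paper's. The paper applies Lemma~\ref{lemma:Slepian} (the Slepian-type bound of Ding) directly to the vector $\{\phi'(y_i)\}_i$: it computes $\var\phi'(y_i)=\Theta(u)$ via Lemma~\ref{lemma:Lawler_6.3.7} and Corollary~\ref{lemma:Lawler_6.4.1_corollary}, bounds the off-diagonal covariance by $O(u\cdot r/l)$ via Lemma~\ref{lemma:escape_probability_general}, and reads off the bound $e^{-(l/r)/C}+(9/10)^{|U|}$ in a single stroke. You instead extract conditional independence through a further Gibbs--Markov split at an intermediate scale, producing independent fine fields $\psi_i$ and a coarse $\chi$, and reduce to a count concentration. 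The conditional-independence identity $\bbP(\min_U\phi'\geq 0\mid\chi)=\prod_i\Phi(\chi(y_i)/\sigma_i)$ is correct, and the variance scales $\sigma_i^2\asymp u$ are right, but your route demands two extra ingredients that the Slepian route bypasses.

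First, $\var\chi(y_i)=O(1)$: Lemma~\ref{lemma:magnitude_Delta} with $W=V_N^-$, $V=B_i$, $x=y_i$ gives only the useless bound $O(\log(N/r))=O(u)$ (its lower bound at depth $\asymp r$ is vacuous and there is no cancellation of constants). You need the sharper universal-constant asymptotic $G_D(y,y)=g\log r_D(y)+c_0+o(1)$, valid near a $C^2$ boundary, so that $c_0$ cancels in $G_{V_N^-}(y_i,y_i)-G_{B_i}(y_i,y_i)$. This is standard but is a genuine extra input and is essential: without it you cannot take $T$ an absolute constant while keeping $\Phi(T/\sigma_i)\to 1/2$, and the $e^{-m/C}$ estimate on $\cE$ collapses. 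Second, your concentration step is misstated: $N_T$ is not a quadratic form, so Hanson--Wright cannot be applied to ``$N_T-\bbE N_T$''. The fix is to bound $N_T\leq T^{-2}\sum_i\chi(y_i)^2$ and apply Hanson--Wright to the genuine quadratic form $Z^T\Sigma Z$; with $\operatorname{Tr}\Sigma\lesssim|U|$, $\|\Sigma\|_{\rm op}=O(1)$ (via your claimed $(r/d)^2$ off-diagonal decay) and $\|\Sigma\|_{\rm HS}^2=O(|U|)$, this yields $\bbP(\cE^\complement)\leq e^{-c|U|}\leq e^{-c'(l/r)}$. That $(r/d)^2$ decay is a half-plane Green-function estimate and does not follow directly from Lemma~\ref{lemma:escape_probability_general}, which only gives a single escape factor $O(r/l)$ rather than the product of two. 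Net: your route can be made to work, but it trades the paper's one-shot Slepian argument for two refined Green-function inputs and a repaired concentration step.
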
 

Let us recall the three ingredients used in the proof of \cite[Lemma 2.4]{Ding2013}.

\begin{lemma}\cite[Corollary 2.9]{Ding2013}\label{lemma:Slepian} Let $\{\xi_i: 1\leq i\leq n\}$ be a mean zero Gaussian process such that the correlation coefficients $\rho_{i,j} \leq \rho \leq \frac{1}{2}$ for all $1\leq i<j\leq n$. Then
	\begin{equation}
		\bbP\left(\min_{1\leq i\leq n} \xi_i\geq 0\right) \leq e^{-1/(2\rho)} + (9/10)^n.
	\end{equation}
\end{lemma}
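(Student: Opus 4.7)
The plan is to apply Slepian's Gaussian comparison inequality to reduce to a process with a \emph{single common factor} and then condition on that factor to exploit conditional independence. First I would normalize: since the event $\{\xi_i \geq 0\}$ is invariant under scaling each $\xi_i$ by a positive constant, I may assume $\var(\xi_i) = 1$ for every $i$, so that $\mathrm{Cov}(\xi_i,\xi_j) = \rho_{i,j} \leq \rho$.

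Next, I would introduce i.i.d.\ standard Gaussians $Z, Y_1, \ldots, Y_n$ and define the comparison process
\begin{equation*}
	\xi'_i := \sqrt{\rho}\, Z + \sqrt{1-\rho}\, Y_i, \qquad 1 \leq i \leq n.
\end{equation*}
Then each $\xi'_i$ is standard normal, and for $i \neq j$, $\mathrm{Cov}(\xi'_i,\xi'_j) = \rho \geq \mathrm{Cov}(\xi_i,\xi_j)$. Slepian's inequality (equivalently, applied to the maximum of $-\xi_i$ vs.\ $-\xi'_i$) then yields
\begin{equation*}
	\bbP\Bigl(\min_i \xi_i \geq 0\Bigr) \leq \bbP\Bigl(\min_i \xi'_i \geq 0\Bigr).
\end{equation*}

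Conditionally on $Z = z$, the $\xi'_i$ are i.i.d.\ $\mathcal{N}\bigl(\sqrt{\rho}\,z,\, 1-\rho\bigr)$, so
\begin{equation*}
	\bbP\Bigl(\min_i \xi'_i \geq 0 \,\Big|\, Z = z\Bigr) = \Phi\!\Bigl(z\sqrt{\tfrac{\rho}{1-\rho}}\Bigr)^{n},
\end{equation*}
where $\Phi$ is the standard normal CDF. I would then split the integral over $z$ at a threshold $z_0$ chosen so that $\Phi(z_0\sqrt{\rho/(1-\rho)}) \leq 9/10$. For $z \leq z_0$, the conditional probability is at most $(9/10)^n$; for $z > z_0$, I would bound the conditional probability by $1$ and apply the standard Gaussian tail estimate $\bbP(Z > z_0) \leq e^{-z_0^2/2}$.

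The remaining calibration, which I expect to be the one delicate point, is choosing $z_0$ so that both halves of the split give bounds of the advertised form simultaneously. Taking $z_0 = 1/\sqrt{\rho}$ produces the target Gaussian tail $e^{-1/(2\rho)}$; on the other hand, since $\rho \leq 1/2$, one has $z_0\sqrt{\rho/(1-\rho)} = 1/\sqrt{1-\rho} \leq \sqrt{2}$, and $\Phi(\sqrt{2})$ is bounded away from $1$, which (possibly after a mild adjustment of the base $9/10$, or a sharpened Mills-ratio bound $\bbP(Z > z_0) \leq (z_0\sqrt{2\pi})^{-1} e^{-z_0^2/2}$) gives a constant strictly less than $1$ to the $n$-th power. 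Combining the two halves then yields the desired bound $e^{-1/(2\rho)} + (9/10)^n$. Once the threshold balancing is fixed, the rest is a direct computation.
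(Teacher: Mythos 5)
The paper does not prove this lemma; it is quoted verbatim from Ding's 2013 paper (his Corollary 2.9), so there is no in-paper proof to compare against. Your argument---normalize, Slepian-compare to a one-factor Gaussian family, condition on the common factor, then split the resulting integral at a threshold---is the standard route to such a bound and is correct as outlined.

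The one point you flagged as potentially delicate does indeed require the Mills-ratio refinement, not the crude tail bound. Concretely, set $a := \sqrt{\rho/(1-\rho)}$ (so $0 < a \le 1$ when $0<\rho\le 1/2$) and choose $z_0 := \Phi^{-1}(9/10)/a \approx 1.2816/a$, which makes $\Phi(a z_0) = 9/10$ exactly. Then $z_0^2/2 = \Phi^{-1}(9/10)^2/(2a^2) \approx 0.821/a^2$ while $1/(2\rho) = 1/2 + 1/(2a^2)$; at $a=1$ (i.e.\ $\rho=1/2$) one has $e^{-z_0^2/2} \approx e^{-0.821} > e^{-1} = e^{-1/(2\rho)}$, so the bound $\bbP(Z>z_0)\le e^{-z_0^2/2}$ is too weak by itself. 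Inserting the Mills prefactor $(z_0\sqrt{2\pi})^{-1} \le (1.28\sqrt{2\pi})^{-1} < 1/3$ repairs this: one checks that
\begin{equation*}
\frac{1}{z_0\sqrt{2\pi}} e^{-z_0^2/2} \le \frac{1}{3}\, e^{1/2 - 0.32/a^2}\cdot e^{-1/(2\rho)} \le e^{-1/(2\rho)}
\end{equation*}
uniformly over $0<a\le 1$, since $e^{1/2-0.32/a^2} \le e^{0.18} < 3$. So your proof closes with $z_0 = \Phi^{-1}(9/10)/\sqrt{\rho/(1-\rho)}$ and the Mills-ratio bound, with no need to weaken the base $9/10$. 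One cosmetic remark: the hypothesis as stated permits $\rho\le 0$, where $\sqrt{\rho}$ is undefined; this case is vacuous (for $\rho<0$ the right-hand side exceeds $1$, and for $\rho=0$ Slepian comparison against i.i.d.\ standard normals gives $2^{-n}\le (9/10)^n$), so restricting to $\rho>0$ is harmless but worth stating.
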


\begin{lemma}\cite[Lemma 6.3.7]{LawlerLimic2010}\label{lemma:Lawler_6.3.7} Let $\cC_n := \{x\in \bbZ^2: \|x\|_2 < n\}$ denote a discrete Euclidean ball of radius $n$ centered at origin. There exist absolute constants $c, C > 0$ such that for all $x\in \cC_{n/4}$ and $y\in \partial^{\text{out}} \cC_n$
	\begin{equation}
		c/n \leq  \mathsf{P}^x(\tau_{\partial^{\text{out}} \cC_n} = y) \leq C/n.
	\end{equation}
\end{lemma}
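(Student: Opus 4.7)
The lemma is a sharp Poisson-kernel-type estimate: starting the SRW from any $x$ well inside the discrete Euclidean ball $\cC_n$, it exits through each individual vertex of $\partial^{\text{out}} \cC_n$ with probability of order $1/n$. Since the result is quoted from Lawler--Limic, I would not reproduce its proof in detail, but the natural plan is a two-step reduction: first, remove the dependence on $x$ via the discrete Harnack inequality; second, pin down the exit distribution from the origin using the potential kernel.

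For the first step, fix $y \in \partial^{\text{out}} \cC_n$ and consider $u_y(x) := \mathsf{P}^x(\tau_{\partial^{\text{out}} \cC_n} = y)$. As a function of the starting point, $u_y$ is non-negative and discrete-harmonic on $\cC_n$, so the classical discrete Harnack inequality for SRW on $\bbZ^2$ gives an absolute constant $K$ with $u_y(x) \in [K^{-1} u_y(0),\, K\, u_y(0)]$ for all $x \in \cC_{n/2}$, and in particular for $x \in \cC_{n/4}$. This reduces the lemma to proving $u_y(0) \asymp 1/n$ uniformly in $y \in \partial^{\text{out}} \cC_n$.

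For the second step I would use the potential kernel $a : \bbZ^2 \to [0,\infty)$ of SRW on $\bbZ^2$, satisfying $a(0)=0$ and $a(w) = g\log|w| + \lambda_0 + O(|w|^{-2})$ with $g = 2/\pi$, together with the last-step decomposition $u_y(0) = \tfrac{1}{4}\sum_{z \sim y,\, z \in \cC_n} G_{\cC_n}(0, z)$ and the standard identity $G_{\cC_n}(0, z) = \mathsf{E}^0[a(S_{\tau_{\cC_n^\complement}} - z)] - a(-z)$. For $z \in \partial^{\text{in}} \cC_n$, both $a(-z)$ and $\mathsf{E}^0 a(S_{\tau_{\cC_n^\complement}} - z)$ equal $g \log n + \lambda_0$ up to lower-order terms, so the main obstacle is to extract the subleading $\Theta(1/n)$ contribution from their cancellation, uniformly in $z$. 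The cleanest route is to observe that the exit distribution from $\cC_n$ starting at the origin is close to uniform on $\partial^{\text{out}}\cC_n$, so that the mean value property of $\log|\cdot|$ over circles applies up to controlled errors; this near-uniformity can itself be justified either by a careful Taylor expansion of the potential kernel, or, more robustly, via a coupling of SRW with planar Brownian motion (KMT or Skorokhod), using that the BM Poisson kernel on the unit disc from the origin is exactly uniform on the circle and that the discrete-to-continuous transfer is handled by a local limit theorem near the boundary. Either route delivers $u_y(0) \asymp 1/n$ uniformly, completing the proof.
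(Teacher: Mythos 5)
The paper states this lemma with a citation to Lawler--Limic and offers no proof, so your decision to cite rather than re-derive matches the paper's treatment exactly. The sketch you append — Harnack to remove the $x$-dependence, then last-step decomposition $\mathsf{P}^x(\tau_{\partial^{\text{out}}\cC_n} = y) = \tfrac14\sum_{z\sim y,\,z\in\cC_n}G_{\cC_n}(x,z)$ plus potential-kernel asymptotics for the Green's function — is the standard textbook line of attack and is in the spirit of Lawler--Limic's own argument.

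One caveat worth registering about the second step: extracting the subleading $\Theta(1/n)$ from the cancellation $\mathsf{E}^0[a(S_\tau - z)] - a(z)$ via a mean-value argument is more delicate than "up to controlled errors" suggests. The $O(\|w\|_2^{-2})$ error in the potential-kernel expansion is \emph{not} small when $\|S_\tau - z\|_2 = O(1)$, and that event has probability of the same order $\Theta(1/n)$ that you are trying to isolate. Moreover, even granting exact uniformity of the exit law, the leading cancellation produces $g\log(n/\|z\|_2) \asymp (n-\|z\|_2)/n$, and for $z \in \partial^{\text{in}}\cC_n$ the gap $n-\|z\|_2$ can be arbitrarily small, so $G_{\cC_n}(0,z)\gtrsim 1/n$ fails pointwise; the lower bound of the lemma needs a small extra geometric step (move a bounded number of lattice steps inward, or aggregate over the interior neighbors of $y$) before the $1/n$ drops out. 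Since you explicitly decline to give a full proof, this is not a flaw in the proposal, but it is precisely where the "cleanest route" you describe would have to do real work, and it is why this is a quoted textbook lemma rather than a one-line computation.
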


\begin{lemma}\cite[Proposition 6.4.1]{LawlerLimic2010}\label{lemma:Lawler_6.4.1} For $1 \leq k < n$ and $x\in \cC_n \setminus \cC_k$, we have
	\begin{equation}
	 \mathsf{P}^x(\tau_{\cC_n^\complement} < \tau_{\cC_k}) = \frac{\log \|x\|_2 - \log k + O(1/k)}{\log n - \log k}.
	\end{equation}
\end{lemma}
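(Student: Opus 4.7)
The plan is to prove this by the classical potential-kernel method for two-dimensional simple random walk, which is the standard route for such escape-from-annulus estimates. Let $a:\mathbb{Z}^2\to[0,\infty)$ denote the potential kernel, defined by $a(0)=0$ together with the property that $a$ is discrete-harmonic on $\mathbb{Z}^2\setminus\{0\}$. The essential analytic input is its well-known asymptotic expansion
\begin{equation*}
	a(y) \;=\; \tfrac{2}{\pi}\log\|y\|_2 \,+\, \kappa \,+\, O(\|y\|_2^{-2}), \qquad \|y\|_2\to\infty,
\end{equation*}
for an explicit constant $\kappa$; this is itself a standard theorem (for instance in Lawler--Limic) and will be invoked as a black box.

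Set $\tau := \tau_{\cC_n^\complement}\wedge\tau_{\cC_k}$ and start the walk at $x\in\cC_n\setminus\cC_k$. Because $0\in\cC_k$, the walk cannot visit $0$ before time $\tau$, and in the bounded region $\cC_n$ we have $\tau<\infty$ almost surely. Hence $(a(S_{t\wedge\tau}))_{t\ge 0}$ is a uniformly bounded martingale (bounded by $\max_{y\in\cC_n}|a(y)|<\infty$), and optional stopping yields
\begin{equation*}
	a(x) \;=\; p\,\mathsf{E}^x\!\bigl[a(S_\tau)\,\big|\,\tau_{\cC_n^\complement}<\tau_{\cC_k}\bigr] \,+\, (1-p)\,\mathsf{E}^x\!\bigl[a(S_\tau)\,\big|\,\tau_{\cC_k}<\tau_{\cC_n^\complement}\bigr],
\end{equation*}
where $p:=\mathsf{P}^x(\tau_{\cC_n^\complement}<\tau_{\cC_k})$ is the quantity to compute.

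A one-step argument controls $\|S_\tau\|_2$ on each event: on $\{\tau_{\cC_n^\complement}<\tau_{\cC_k}\}$ the exit site satisfies $\|S_\tau\|_2\in[n,n+1)$, and on $\{\tau_{\cC_k}<\tau_{\cC_n^\complement}\}$ the entry site satisfies $\|S_\tau\|_2\in[k-1,k)$. Feeding these into the asymptotic expansion of $a$ turns the two conditional expectations into $\tfrac{2}{\pi}\log n+\kappa+O(1/n)$ and $\tfrac{2}{\pi}\log k+\kappa+O(1/k)$ respectively. Combining this with $a(x)=\tfrac{2}{\pi}\log\|x\|_2+\kappa+O(\|x\|_2^{-2})$, and using $\|x\|_2\ge k$ to absorb $O(\|x\|_2^{-2})$ into $O(1/k)$, and the trivial bound $1/n\le 1/k$, one obtains
\begin{equation*}
	p\cdot\tfrac{2}{\pi}(\log n-\log k) \;=\; \tfrac{2}{\pi}(\log\|x\|_2-\log k) \,+\, O(1/k),
\end{equation*}
which rearranges to the stated formula.

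There is no substantive obstacle here: the proof is a direct application of the optional stopping theorem to the harmonic potential kernel, and the only point requiring care is verifying that the $O(1/k)$ error term in the final numerator is uniform in $n$ and in $x\in\cC_n\setminus\cC_k$. That uniformity is inherited directly from the uniform error term in the asymptotic expansion of $a$ and from the one-step control of $\|S_\tau\|_2$ above; it does not require any further estimate.
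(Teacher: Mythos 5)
Your proof is correct. The paper does not prove this lemma at all --- it is quoted verbatim as \cite[Proposition 6.4.1]{LawlerLimic2010} --- and your argument (optional stopping applied to the potential kernel $a$, with the expansion $a(y)=\tfrac{2}{\pi}\log\|y\|_2+\kappa+O(\|y\|_2^{-2})$ and one-step control of $\|S_\tau\|_2$ on the two exit events) is precisely the standard proof of that proposition, and is the same potential-kernel technique the paper itself deploys in its proof of Lemma~\ref{lemma:escape_probability_general}. The only point worth a remark is the degenerate end of the range ($k=1$, where the hitting point of $\cC_k$ is the origin and one should use $a(0)=0$ directly rather than the logarithmic expansion), but there the claimed error $O(1/k)=O(1)$ absorbs the resulting constant, so nothing breaks.
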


As a corollary of the last lemma, we have

\begin{corollary}\label{lemma:Lawler_6.4.1_corollary} For $1 \leq k < n $ and $x\in \partial^{\text{out}} \cC_k$, we have
	\begin{equation}
 \mathsf{P}^x(\tau_{\cC_n^\complement} < \tau^+_{\partial^{\text{out}} \cC_k}) = \frac{\Theta(1/k)}{\log n - \log k} .
	\end{equation}
\end{corollary}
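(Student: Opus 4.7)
The approach is a strong-Markov analysis at time~$1$ (for the upper bound) and along an explicit outward path (for the lower bound), reducing in both cases to Lemma~\ref{lemma:Lawler_6.4.1}. For $x \in \partial^{\text{out}} \cC_k$, the one-step Markov decomposition reads
\begin{equation*}
\mathsf{P}^x(\tau_{\cC_n^\complement} < \tau^+_{\partial^{\text{out}} \cC_k}) = \sum_{y \sim x,\, y \notin \cC_k \cup \partial^{\text{out}} \cC_k} \tfrac{1}{4}\, \mathsf{P}^y(\tau_{\cC_n^\complement} < \tau_{\partial^{\text{out}} \cC_k}),
\end{equation*}
since a step into $\partial^{\text{out}} \cC_k$ immediately fails the event while a step into $\cC_k$ forces the walk to cross $\partial^{\text{out}} \cC_k$ again on its way out to $\cC_n^\complement$. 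Each surviving $y$ has $\|y\|_2 < k+2$.

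For the upper bound, I would use $\tau_{\partial^{\text{out}} \cC_k} \leq \tau_{\cC_k}$ for walks starting outside $\cC_k$ (a walk enters $\cC_k$ via $\partial^{\text{out}} \cC_k$), giving $\mathsf{P}^y(\tau_{\cC_n^\complement} < \tau_{\partial^{\text{out}} \cC_k}) \leq \mathsf{P}^y(\tau_{\cC_n^\complement} < \tau_{\cC_k})$. Lemma~\ref{lemma:Lawler_6.4.1} and $\log(\|y\|_2/k) \leq \log((k+2)/k) = O(1/k)$ then bound the right-hand side by $O(1/k)/(\log n - \log k)$, and summing over the (at most four) neighbors yields the upper bound.

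For the lower bound, fix a large absolute constant $L$ and consider the deterministic $L$-step path $x, x+\bfe, \ldots, x+L\bfe$, where $\bfe \in \{\pm \bfe_1, \pm \bfe_2\}$ is chosen so that $\bfe \cdot (x/\|x\|_2) \geq 1/\sqrt{2}$. The identity $\|x+j\bfe\|_2^2 \geq k^2 + \sqrt{2}\,jk + j^2$ gives $\|x+j\bfe\|_2 > k+1$ for all $j \geq 2$, and for $j=1$ a direct norm check of the four neighbors of $x+\bfe$ shows it has no neighbor in $\cC_k$, so $x+\bfe \notin \partial^{\text{out}} \cC_k$. Thus the path avoids $\cC_k \cup \partial^{\text{out}} \cC_k$, has probability $(1/4)^L$ under $\mathsf{P}^x$, and terminates at some $z$ with $\|z\|_2 \geq k + L/\sqrt{2}$. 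Since $\partial^{\text{out}} \cC_k \subset \cC_{k+1}$ forces $\tau_{\partial^{\text{out}} \cC_k} \geq \tau_{\cC_{k+1}}$ starting from $z \notin \cC_{k+1}$, Lemma~\ref{lemma:Lawler_6.4.1} applied with inner radius $k+1$ gives
\begin{equation*}
\mathsf{P}^z(\tau_{\cC_n^\complement} < \tau_{\partial^{\text{out}} \cC_k}) \geq \frac{\log\|z\|_2 - \log(k+1) + O(1/k)}{\log n - \log(k+1)} \geq \frac{c/k}{\log n - \log k}
\end{equation*}
for some $c > 0$, provided $L$ is chosen large enough that the linear-in-$L$ term in $\log\|z\|_2 - \log(k+1) \gtrsim (L/\sqrt{2} - 1)/k$ dominates the $O(1/k)$ error. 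Combining with the $(1/4)^L$ probability of the path yields the matching lower bound.

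The main obstacle is the elementary geometric verification underlying the outward path, in particular the case check that $x+\bfe$ does not itself lie in $\partial^{\text{out}} \cC_k$. Boundary regimes, namely small $k$ (where the $O(1/k)$ in Lemma~\ref{lemma:Lawler_6.4.1} is not dominated) and small $n-k$ (where the path risks exiting $\cC_n$), reduce the claim to $\Theta(1)$ or $\Theta(1/\log n)$ bounds and are handled directly.
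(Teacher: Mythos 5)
Your proof is correct and takes essentially the same route as the paper: the upper bound follows from $\mathsf{P}^x(\tau_{\cC_n^\complement} < \tau^+_{\partial^{\text{out}}\cC_k}) \leq \mathsf{P}^x(\tau_{\cC_n^\complement} < \tau_{\cC_k})$ combined with Lemma~\ref{lemma:Lawler_6.4.1}, and the lower bound pushes the walk a bounded number of lattice steps radially outward (gaining $\Theta(j/k)$ in radius per $\Theta(j)$ steps, while avoiding $\partial^{\text{out}}\cC_k$) before applying Lemma~\ref{lemma:Lawler_6.4.1} at the endpoint. The only cosmetic difference is the one-step Markov decomposition in your upper bound, which the paper skips by noting directly that $x \in \partial^{\text{out}}\cC_k$ already has $\|x\|_2 < k+1$.
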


We defer the verification of this corollary to the end of this subsection in favor of first giving:

\begin{proof}[Proof of Lemma \ref{Ding_Lem_2.4}] Let $\rho_{x,y}$ be the correlation coefficient between $\phi'(x), \phi'(y)$. By Lemma \ref{lemma:Slepian}, it suffices to show that $\rho_{x, y} = O(r/l)$ for all distinct $x, y \in U$. Applying Lemma \ref{lemma:Lawler_6.3.7} to $x\in Q \subset \cC \in \mathfrak{C}$, we have
	\begin{equation}\label{eq:proof_Ding_Lem_2.4}
		\phi'(x) = \sum_{y\in \partial^{\text{out}} \cC} a_{x, y} h'(y), \quad \text{ where } \frac{c}{r'} \leq a_{x, y} \leq \frac{C}{r'},
	\end{equation}
	which gives
	\begin{equation}\label{eq:proof_Ding_Lem_2.4_var}
		\var( \phi'(x)) = \Theta(1/r'^2) \sum_{y, z\in \partial^{\text{out}} \cC} G_{V_N^-}(y, z) = \Theta(1/r'^2) \sum_{y\in \partial^{\text{out}} \cC} G_{V_N^-}(y, \partial^{\text{out}} \cC).
	\end{equation}
	Now, observing that
	\begin{equation}
		\mathsf{P}^y(\tau_{\cC_{N \diam V}(x)^\complement} < \tau^+_{\partial^{\text{out}} \cC}) \leq \mathsf{P}^y(\tau_{(V_N^-)^\complement} < \tau^+_{\partial^{\text{out}} \cC})  \leq \mathsf{P}^y(\tau_{\cC_{r/2}(x)^\complement} < \tau^+_{\partial^{\text{out}} \cC})
	\end{equation}
	and applying Corollary \ref{lemma:Lawler_6.4.1_corollary} with $k = 8r'$ and $n = N \diam V$ or $n = r/2$, we obtain
	\begin{equation}\label{eq:1/(r'u)}
		\mathsf{P}^y(\tau_{(V_N^-)^\complement} < \tau^+_{\partial^{\text{out}} \cC}) = \Theta(1/(r' u)).
	\end{equation}
	Using \eqref{eq:1/(r'u)} and comparing with geometric r.v.'s, we get
	\begin{equation}\label{eq:Green_function_estimate_for_Ding}
		G_{V_N^-}(y, \partial^{\text{out}} \cC) = \Theta(r'u),
	\end{equation}
	which, together with $|\partial^{\text{out}} \cC| \asymp r'$, yields $\var( \phi'(x) )= \Theta(u)$ in \eqref{eq:proof_Ding_Lem_2.4_var}. Now let us bound the covariances between $\phi'(x)$ and $\phi'(y)$ for distinct $x, y\in U$. By assumption on $U$, $x\in \cC^{(1)}$ and $y\in \cC^{(2)}$ for distinct $\cC^{(1)}, \cC^{(2)} \in \mathfrak{C}$. By \eqref{eq:proof_Ding_Lem_2.4} we see that
	\begin{align}
		\cov(\phi'(x), \phi'(y)) &\leq O(1/r') \max_{w \in \partial^{\text{out}} \cC^{(1)}} G_{V_N^-}(w, \partial^{\text{out}} \cC^{(2)})\\
		&\leq O(1/r') \max_{w\in \partial^{\text{out}}  \cC^{(1)}} \bbP_w(\tau_{\partial^{\text{out}} \cC^{(2)}} < \tau_{(V_N^-)^\complement})\! \max_{z \in \partial^{\text{out}} \cC^{(1)}} G_{V_N^-}(z, \cC^{(2)}) \\
		&\overset{\eqref{eq:Green_function_estimate_for_Ding}}{\leq} O(u) \max_{w\in \partial^{\text{out}}  \cC^{(1)}} \bbP_w(\tau_{\partial^{\text{out}} \cC^{(2)}} < \tau_{(V_N^-)^\complement}).
	\end{align}   
	It remains to show that $\bbP_w(\tau_{\partial^{\text{out}} \cC^{(2)}} < \tau_{(V_N^-)^\complement}) = O(r/l)$ for $w\in \partial^{\text{out}} \cC^{(1)}$ (cf.~\cite[Lemma 2.7]{Ding2013}). Since $\dist(\cC^{(1)}, \cC^{(2)}) \geq l - 2r' $, the preceding probability can be upper bounded by $\bbP_w(\tau_{\cC_{l/2}(w)^\complement} < \tau_{(V_N^-)^\complement})$. This is at most $O(r/l)$ by Lemma \ref{lemma:escape_probability_general} with $U = V_N^- \cap \cC_{l/2}(w)$, $E = \partial^{\text{out}} V_N^- \cap \cC_{l/2}(w)^-$, $D = \partial^{\text{out}} U \setminus E$, and $\Gamma = \bbR^2\setminus N V$.
\end{proof}

Lastly, we give

\begin{proof}[Proof of Corollary \ref{lemma:Lawler_6.4.1_corollary}] The upper bound directly follows from Lemma \ref{lemma:Lawler_6.4.1} because $\mathsf{P}^x(\tau_{\cC_n^\complement} < \tau^+_{\partial^{\text{out}} \cC_k}) \leq \mathsf{P}^x(\tau_{\cC_n^\complement} < \tau_{\cC_k})$. For the lower bound, we first notice that for all $x \in \partial^{\text{out}} \cC_k$, $\|x\|_2 < k+1$ by triangle inequality applied to a neighbor of $x$ in $\cC_k$. Thus, $\partial^{\text{out}} \cC_k\subset \cC_{k+1}$. On the other hand, given any integer $j \geq 1$, there always exists a path from any $x\in \partial^{\text{out}}\cC_k$ to some $$x^{(j)} \in \partial^{\text{out}}\cC_{k+j}$$ with at most $\lceil j \sqrt{2} \rceil$ steps without returning to $\partial^{\text{out}} \cC_k$. (To see this, WLOG $x = (x_1, x_2)$ with $x_1 \geq x_2 \geq 0$, then we can take repeated steps of $(1, 0)$. It is routine to check that $(x_1+1, x_2) \notin \partial^{\text{out}} \cC_k$ and $\|(x_1+s, x_2)\|_2 \geq k + \tfrac{s}{\sqrt{2}}$.) Taking such a path and then applying Lemma \ref{lemma:Lawler_6.4.1}, we have
\begin{align}
\mathsf{P}^x(\tau_{\cC_n^\complement} < \tau^+_{\partial^{\text{out}} \cC_k}) &\geq (\tfrac14)^{\lceil j \sqrt{2} \rceil} \cdot \mathsf{P}^{x^{(j)}}(\tau_{\cC_n^\complement} < \tau_{\cC_{k+1}}) \\
&\gtrsim_j \frac{\log(k+j) - \log(k+1) + O(1/k)}{\log n - \log(k+1)} \\
&\gtrsim_j \frac{\log(1 + j/k) + O(1/k)}{\log n - \log k}
\end{align}
as long as $k + j < n$. By choosing $j$ to be a large absolute constant,  the error term can be absorbed, so the numerator is at least $\Theta(1/k)$ for all $k < n - j$. On the other hand, for $k \geq n - j$, the desired estimate is $\Theta(1)$ and immediately follows from the first line above: in this case $x^{(j)} \in \cC_n^\complement$ already, so the probability on the right side is $1$. 
\end{proof}

\section{Proof of the lower bound in Theorem \ref{main_tail_bound}}\label{s:proof_lower_bound}
\label{s:l}
We use the following (crude) strategy. Consider a scale $r = N e^{-c_0 u}$ for some constant $c_0 > 0$ to be chosen later. As in \eqref{notation_discrete_box}, let $Q_r(x)$ denote the $r\times r$ square centered at $x$. Choose $\mathbb{X}_r \subset (r\mathbb{Z})^2$ such that $\{Q_{r}(x): x\in \mathbb{X}_r\}$ form a minimal covering of $V_N$ by $r\times r$ squares. For each $x \in \mathbb{X}_r$, we consider a decomposition of $h$ into a ``binding field'' $\varphi^{(x)}$ and an (independent) DGFF $h^{(x)}$ on the \emph{enlarged box} $Q_{r'}(x)$ with $r' = r (1 + \delta)$ for some small constant $\delta > 0$:
 \begin{align}
	\varphi^{(x)}(\cdot) &:= \bbE\left[h(\cdot)\Big|h|_{(Q_{r'}(x))^\complement}\right] \\
	h^{(x)}(\cdot) &:= h(\cdot) - \varphi^{(x)}(\cdot). 
\end{align}
Then for each $y \in Q_{r'}(x)$, we have the field decomposition
\begin{align}
	h(y) &= \varphi^{(x)}(y) \oplus h^{(x)}(y)  \\
	 &=\varphi^{(x)}(x)  + (\varphi^{(x)}(y) - \varphi^{(x)}(x)) + h^{(x)}(y).
\end{align}
The right tail event $\Omega_{V_N}(u) := \min_{x\in V_N} h(x) \geq -m_N + u$ can be achieved by the following joint occurrences: In each box $Q_{r'}(x)$,
\begin{itemize}
	\item The ``binding field'' $\varphi^{(x)}$ is close to zero at the center: 
	\begin{equation}\label{eq:lower_bound_condition_1}
		|\varphi^{(x)}(x)| \leq 1.
	\end{equation}
	\item The \emph{downward oscillation} of the ``binding field'' $\varphi^{(x)}$ is sub-linear in $u$: 
	\begin{equation}\label{eq:lower_bound_condition_2}
	\min_{y\in Q_{r}(x)} \varphi^{(x)}(y) - \varphi^{(x)}(x) \geq - u^{\frac{3}{4}} - C
	\end{equation}
	where $C$ is some constant to be chosen later.
	\item The \emph{lift} of the complementary DGFF $h{(x)}$ is high enough: 
	\begin{equation}\label{eq:lower_bound_condition_3}
		\min_{y\in Q_{r}(x)} h^{(x)}(y) \geq -m_N + ku,
	\end{equation}
	where $k > 1$ is some constant to be chosen later.
\end{itemize}

Denote $\wh\bbP(\cdot) := \bbP(\cdot|Z_h=0)$. We abbreviate
\begin{equation}\label{eq:centers_of_boxes_binding_field}
	\vec{Y} := (\varphi^{(x)}(x): x\in \mathbb{X}_r).
\end{equation}
Then the event $\|\vec Y\|_\infty \leq 1$ is precisely the event that \eqref{eq:lower_bound_condition_1} occurs for all boxes. Our first observation is that since each $\varphi^{(x)}(x)$ under $\bbP$ is a Gaussian of variance $\Theta(\log(N/r')) \asymp u$ by Lemma \ref{lemma:magnitude_Delta}, and $\# \mathbb{X}_r \asymp (N/r)^2 = e^{2c_0 u}$, this event should already give the double exponential tail in \eqref{eq:main_tail_bound_lower}. \medskip

On the other hand, we can also show that \eqref{eq:lower_bound_condition_2} and \eqref{eq:lower_bound_condition_3} occur with \emph{overwhelmingly high} probability compared to the number of boxes (provided $k$ is small enough), using Borell-TIS and Fernique (as in the proof of Lemma \ref{oscillation_Delta}) for the binding field $\varphi^{(x)}$ and the tail bound from \cite{Ding-Zeitouni2013} for the complementary DGFF $h^{(x)}$. Formally, we have

\begin{lemma}\label{lem:oscillation_of_varphi_x} There exists constant $C = C(\delta) < \infty$ such that
	\begin{equation}
		\mathbb{P}\left(\exists x\in \mathbb{X}_r: \sup_{y\in Q_r(x)} | \varphi^{(x)} (y)-\varphi^{(x)} (x)| > u^{\frac34} + C \right)\leq 2 e^{-u^{\frac32}/C + 2c_0 u}.
	\end{equation}	
\end{lemma}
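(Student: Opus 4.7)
The plan is to prove the lemma by a union bound over $\mathbb{X}_r$, reducing it to a per-box concentration estimate for the centered Gaussian field $\Delta^{(x)}(y) := \varphi^{(x)}(y) - \varphi^{(x)}(x)$ on $Q_r(x)$; this concentration then follows from Borell--TIS combined with the auxiliary Lemmas~\ref{lemma:diff_Delta_squared_modified} and~\ref{lemma:dudley_modified}. Since $|\mathbb{X}_r| \asymp (N/r)^2 = e^{2c_0 u}$, it suffices to establish, for every fixed $x\in\mathbb{X}_r$, a per-box bound of the form
\begin{equation*}
\bbP\left(\sup_{y\in Q_r(x)} |\Delta^{(x)}(y)| > u^{3/4} + C(\delta)\right) \leq 2 e^{-u^{3/2}/C(\delta)},
\end{equation*}
and then multiply by $|\mathbb{X}_r|$, absorbing the resulting $e^{2c_0 u}$ into the exponent.

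\textbf{Per-box increment bound.} For a fixed $x$, I would first apply Lemma~\ref{lemma:diff_Delta_squared_modified} with $W = W_N$, $V = Q_{r'}(x)$, $U = Q_r(x)$, using the key geometric fact $\dist(Q_r(x), Q_{r'}(x)^\complement) \asymp \delta r$. This gives the increment bound
\begin{equation*}
\bbE\left[\left(\varphi^{(x)}(y) - \varphi^{(x)}(z)\right)^2\right] \lesssim \|y-z\|_2/(\delta r), \qquad y,z\in Q_r(x).
\end{equation*}
Specializing to $z = x$ yields the pointwise variance $\sigma^2 := \sup_{y\in Q_r(x)} \var(\Delta^{(x)}(y)) \lesssim 1/\delta$. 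Feeding the same increment bound as the hypothesis of Lemma~\ref{lemma:dudley_modified}, with $L \asymp 1/(\delta r)$, $\diam(Q_r) \lesssim r$, and $c_{Q_r} \gtrsim 1$, I obtain
\begin{equation*}
\mu := \bbE\sup_{y\in Q_r(x)} |\Delta^{(x)}(y)| \leq \bbE\sup_{y,z\in Q_r(x)} \left|\Delta^{(x)}(y) - \Delta^{(x)}(z)\right| \lesssim 1/\sqrt{\delta}.
\end{equation*}
Both $\sigma^2$ and $\mu$ are thus constants depending only on $\delta$, in particular independent of $u$ and $N$.

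\textbf{Concentration and union bound.} Borell--TIS, applied to the absolute supremum of the centered Gaussian field $\Delta^{(x)}$ on $Q_r(x)$ (equivalently, applied to the doubled process $\pm \Delta^{(x)}$), then gives
\begin{equation*}
\bbP\left(\sup_{y\in Q_r(x)} |\Delta^{(x)}(y)| > \mu + t\right) \leq 2 e^{-t^2/(2\sigma^2)} \leq 2 \exp\left(-t^2\delta/C'\right).
\end{equation*}
Choosing $t = u^{3/4}$, absorbing $\mu \lesssim 1/\sqrt{\delta}$ into the additive constant $C = C(\delta)$, and then unioning over the $\asymp e^{2c_0 u}$ boxes in $\mathbb{X}_r$, the claimed bound $2 e^{-u^{3/2}/C + 2c_0 u}$ follows after relabeling constants.

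\textbf{Main obstacle.} The argument is essentially the same Borell--TIS plus Dudley/Fernique scheme already used to prove Lemma~\ref{oscillation_Delta}, so no genuinely new idea is needed. The only care required is in tracking the $\delta$-dependence: the buffer $Q_{r'}(x) \setminus Q_r(x)$ of width $\asymp \delta r$ must enter through $\dist(Q_r(x), Q_{r'}(x)^\complement)$ in Lemma~\ref{lemma:diff_Delta_squared_modified} in exactly the way that cancels the factor of $r$ coming from $\diam(Q_r)$ in Lemma~\ref{lemma:dudley_modified}, so that both $\mu$ and $\sigma^2$ come out independent of $r$ (and hence of $u,N$). Without that cancellation the per-box exponent would be too weak to beat the $e^{2c_0 u}$ contributed by the union bound.
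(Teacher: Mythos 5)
Your proposal follows essentially the same route as the paper: fix $x$, apply Lemma~\ref{lemma:diff_Delta_squared_modified} with $U = Q_r(x)$, $V = Q_{r'}(x)$, $W = W_N$ to get $\sigma^2 \lesssim 1/\delta$ and feed the same increment bound into Lemma~\ref{lemma:dudley_modified} to get $\mu \lesssim 1/\sqrt{\delta}$, then Borell--TIS with $t = u^{3/4}$ followed by a union bound over the $\asymp e^{2c_0 u}$ boxes. The reasoning, including the identification that the $r$ from $\diam(Q_r)$ must cancel against the $\delta r$ from the buffer width to leave $u$-independent constants, matches the paper's proof exactly.
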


\begin{lemma}\label{lem:min_of_h_x} There exists constant $C = C(c_0) < \infty$ such that for all $x \in \mathbb{X}_r$, 
	\begin{equation}
		\bbP\left(\exists x\in \mathbb{X}_r: \min_{y\in Q_{r}(x)} h^{(x)}(y) \leq -m_N + ku\right) \leq C u e^{- (2 c_0 - \sqrt{2\pi} k) u}.
	\end{equation}	
\end{lemma}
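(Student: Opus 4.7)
The plan is to treat each box $Q_{r'}(x)$ in isolation, exploiting that by Gibbs--Markov $h^{(x)}$ is a zero boundary DGFF on $Q_{r'}(x)$ (independent of $\varphi^{(x)}$), and to close with a union bound over the $\asymp (N/r)^2 = e^{2c_0 u}$ centers in $\mathbb{X}_r$. After this reduction the event of interest is simply the \emph{left} tail of the minimum of a box DGFF, once the threshold is rewritten relative to the natural centering $m_{r'}$.

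First, since $\log r' = \log N - c_0 u + O(1)$ and since the regime $u \le \sqrt{\log N}$ forces $c_0 u \ll \log N$ (so that the $\log\log$ corrections in $m_{r'}$ and $m_N$ match up to $O(1)$), the expansion \eqref{eq:center_max} yields
\begin{equation}
m_N - m_{r'} = 2\sqrt{g}\, c_0\, u + O(\log\log N).
\end{equation}
Setting $s := m_N - m_{r'} - k u = (2\sqrt{g}\, c_0 - k) u + O(\log\log N)$, so long as $k < 2\sqrt{g}\, c_0$ one has $s \ge 1$ for $u$ sufficiently large, and the target event is contained in $\{\min_{y \in Q_{r'}(x)} h^{(x)}(y) + m_{r'} \le -s\}$. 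I would then invoke the standard left tail estimate for the centered minimum of the DGFF on a square, from \cite{Ding-Zeitouni2013}:
\begin{equation}
\bbP\Bigl(\min_{y \in Q_{r'}(x)} h^{(x)}(y) + m_{r'} \le -s\Bigr) \le C_1\, s\, e^{-\sqrt{2\pi}\, s}, \qquad s \ge 1.
\end{equation}
Substituting and using the identity $2\sqrt{g}\cdot \sqrt{2\pi} = 2\sqrt{2/\pi}\cdot \sqrt{2\pi} = 4$ produces, for a single $x$,
\begin{equation}
\bbP\Bigl(\min_{y \in Q_r(x)} h^{(x)}(y) \le -m_N + ku\Bigr) \lesssim u\, e^{-(4 c_0 - \sqrt{2\pi}\, k) u}.
\end{equation}
A final union bound multiplies this by $|\mathbb{X}_r| \lesssim e^{2 c_0 u}$, giving exactly the advertised rate $2 c_0 - \sqrt{2\pi}\, k$.

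The main (and essentially only) non-mechanical point is locating a left tail estimate whose exponential rate in the centered variable is \emph{precisely} $\sqrt{2\pi}$: combined with the factor $2\sqrt{g}$ coming from the comparison of centerings, this rate produces the factor $4$ multiplying $c_0$ in the single-box estimate, which the $e^{2 c_0 u}$ entropy cost of the covering then reduces to $2c_0$. A strictly smaller exponential rate in the left tail would fail to beat the entropy of the union bound, so this sharp form of the tail is essential.
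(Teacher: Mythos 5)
Your argument is essentially identical to the paper's: expand $m_N - m_r$ (or $m_{r'}$, which differs by $O(1)$), invoke the sharp tail bound of Ding--Zeitouni, and close with a union bound over $\# \mathbb{X}_r \asymp e^{2c_0 u}$ boxes, using $2\sqrt{g}\cdot\sqrt{2\pi}=4$. One small internal inconsistency worth fixing: your parenthetical remark correctly argues that the $\log\log$ corrections match up to $O(1)$, but your display then writes $m_N - m_{r'} = 2\sqrt{g}\,c_0 u + O(\log\log N)$; the error term should be $O(1)$ (as the paper has), since $\log\log N - \log\log r' = O(c_0 u/\log N) = O(1)$ for $u\le\sqrt{\log N}$. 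With $O(\log\log N)$ as written, the subsequent claim that $s\ge 1$ for $u$ sufficiently large would not be uniform in $N$; with $O(1)$ it goes through exactly as in the paper, where the case $(2\sqrt{g}c_0-k)u \le 1$ is simply noted to be trivial.
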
	

We defer the proofs of these estimates to Section \ref{s:proof_of_lemmas_lower_bound}. Now, for any $c_0 > \sqrt{\pi/2}$, we may choose $k \in (1, 2c_0/\sqrt{2\pi})$. Let $\cA$ be the event that \eqref{eq:lower_bound_condition_2} and \eqref{eq:lower_bound_condition_3} holds for all $x\in \mathbb{X}_r$. Then $\bbP(\cA^\complement) = o(1)$ by the preceding two lemmas, which remains true conditioned on $Z_h = 0$ by \eqref {conditioning_zero_trick} of Lemma \ref{remove_conditioning_trick}:
\begin{equation}\label{eq:overwhelming_probability_unconditional}
	\wh\bbP(\cA^\complement) \leq 2 \bbP(\cA^\complement) = o(1).
\end{equation}
However, since the $o(1)$ here is only exponential decay, we cannot simply use Union Bound. Instead, we will show that $\cA^\complement$ still has $o(1)$ probability \emph{conditioned on a sub-event} of $\|\vec Y\|_\infty \leq 1$ with double exponential probability:

\begin{lemma}\label{lower_bound_main} There exists constant $C < \infty$ such that for $c_0 > \sqrt{\pi/2}$, there exists a set $B \subset \{\|\vec y\|_\infty \leq 1\}$ such that $\wh\bbP(\cA^\complement|\vec Y \in B) = o(1)$ and $\wh\bbP(\vec Y \in B) \geq e^{-C c_0 u e^{2c_0 u}}$.
\end{lemma}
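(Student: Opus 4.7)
The plan is to take $B = \{\vec y : \|\vec y\|_\infty \leq 1\}$ itself (without any further restriction) and split the analysis into two independent ingredients.

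First, I lower bound $\wh\bbP(\vec Y \in B)$ by a product decomposition. By the Gaussian Correlation Inequality applied to the symmetric convex sets $\{|Y_x| \leq 1\}$, $x \in \mathbb{X}_r$, one has $\wh\bbP(\|\vec Y\|_\infty \leq 1) \geq \prod_{x \in \mathbb{X}_r} \wh\bbP(|Y_x| \leq 1)$. Each $Y_x = \varphi^{(x)}(x)$ is centered Gaussian; by Lemma \ref{lemma:magnitude_Delta} with $\dist(x, Q_{r'}(x)^\complement) \asymp r' = N e^{-c_0 u(1+\delta)}$ and $\diam W_N \asymp N$, its $\bbP$-variance is at most $g \log(N/r') + O(1) \leq C c_0 u$, and conditioning on $Z_h = 0$ only reduces this. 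Each marginal is therefore $\geq c/\sqrt{c_0 u}$, and the product over $|\mathbb{X}_r| \asymp e^{2c_0 u}$ coordinates gives $\wh\bbP(\vec Y \in B) \geq (c/\sqrt{c_0 u})^{|\mathbb{X}_r|} \geq e^{-C c_0 u e^{2c_0 u}}$, as required.

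Second, I show $\wh\bbP(\cA^\complement \mid \vec Y \in B) = o(1)$ by Union Bound over the $|\mathbb{X}_r|$ boxes, using per-box conditional failure estimates that are uniform in $\vec y \in B$ and match those underlying Lemmas \ref{lem:oscillation_of_varphi_x}--\ref{lem:min_of_h_x}. For the oscillation \eqref{eq:lower_bound_condition_2}, the law of $\{\varphi^{(x)}(y) - \varphi^{(x)}(x) : y \in Q_r(x)\}$ conditioned on $\vec Y = \vec y$ is Gaussian with the same covariances as its unconditional law (since we condition on linear functionals of the field) and a mean shift of order $|y_x| = O(1)$ coming from regression against the strongly correlated coordinate $Y_x$; this shift can be absorbed into the constant $C$ in \eqref{eq:lower_bound_condition_2}, so Borell--TIS still yields a per-box sub-Gaussian tail $\lesssim e^{-u^{3/2}/C}$. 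For the min \eqref{eq:lower_bound_condition_3}, the DGFF $h^{(x)}$ on $Q_{r'}(x)$ is $\bbP$-independent of $\varphi^{(x)}$, hence of $Y_x$; the residual dependence on $(Y_{x'})_{x'\neq x}$ and on the $Z_h = 0$ constraint should induce only an $O(1)$ mean shift of $h^{(x)}$ on $Q_r(x)$, after which the Ding--Zeitouni left-tail estimate from \cite{Ding-Zeitouni2013} applies with the modified mean and reproduces a per-box failure bound $\lesssim e^{-(4c_0 - \sqrt{2\pi}k)u}$. Summing over the $|\mathbb{X}_r| \asymp e^{2c_0 u}$ boxes then gives
\begin{equation*}
\wh\bbP(\cA^\complement \mid \vec Y \in B) \lesssim e^{-(2c_0 - \sqrt{2\pi}k)u} + e^{2c_0 u - u^{3/2}/C} = o(1)
\end{equation*}
for $k < 2c_0/\sqrt{2\pi}$, which is achievable since $c_0 > \sqrt{\pi/2}$ allows $k$ to be chosen in the nonempty interval $(1, 2c_0/\sqrt{2\pi})$.

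The main obstacle is the uniform-in-$\vec y$ control of $h^{(x)}$ under the joint conditioning on $\vec Y = \vec y$ (a set of $|\mathbb{X}_r| = e^{2c_0 u}$ linear constraints, each with $|y_{x'}| \leq 1$) and on $Z_h = 0$. Each coordinate $y_{x'}$ contributes a harmonic-measure-weighted shift to $h^{(x)}$ on $Q_r(x)$, and one must show that the sum of these shifts over all $x' \neq x$, together with the single $Z_h = 0$ constraint of $\Theta(1)$ variance, stays bounded uniformly in $\vec y$. This should follow from the rapid decay of the harmonic measure between $Q_{r'}(x)$ and distant boxes (of size $r'$) together with a geometric count of boxes at each dyadic distance scale, but making this quantitative and uniform — without losing the $(4c_0 - \sqrt{2\pi}k)u$ exponent — is the key technical step.
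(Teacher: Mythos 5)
Your proposal diverges from the paper's proof at the one point you yourself flag as "the main obstacle," and this is indeed where it breaks down. You take $B = \{\|\vec y\|_\infty \leq 1\}$ and must then bound the conditional mean shift of each $X \in \vec X$ under $\vec Y = \vec y$ \emph{uniformly} over all $\vec y$ in this $\ell^\infty$-ball. The mean shift is $\vec l \cdot \vec y$ where $\vec l$ is the regression vector of $X$ on $\vec Y$, and bounding $|\vec l \cdot \vec y|$ by $\|\vec l\|_1 \|\vec y\|_\infty \leq \|\vec l\|_1$ requires $\|\vec l\|_1 = O(1)$. There is no reason this holds: $\vec Y$ has $e^{2c_0 u}$ coordinates whose mutual covariances decay only logarithmically (the field is log-correlated), so the regression weights do not decay fast enough in $\ell^1$. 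Your sketch of "rapid decay of the harmonic measure to distant boxes" overestimates the decay in 2D, and your conclusion that the accumulated shift is $O(1)$ is unsubstantiated and most likely false for worst-case $\vec y$ in the $\ell^\infty$-ball.

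The paper sidesteps this entirely by taking a \emph{smaller} set, $B := \{\|\vec y\|_2 \leq 1, \ \vec y \cdot \wh\Sigma^\dagger \vec y \leq 1\}$. The quadratic constraint $\vec y \cdot \wh\Sigma^\dagger \vec y \leq 1$ is exactly what Lemma \ref{conditional_expectation_of_Gaussian} needs: by Cauchy--Schwarz, $|\vec l \cdot \vec y| = |\vec l^T \sqrt{\wh\Sigma}\sqrt{\wh\Sigma^\dagger}\vec y| \leq \sqrt{\var(\bbE[X|\vec Y])} \cdot \sqrt{\vec y \cdot \wh\Sigma^\dagger \vec y}$, so the mean shift is at most one standard deviation of $\bbE[X|\vec Y]$, uniformly in $\vec y \in B$ and with no need to control $\|\vec l\|_1$. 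This immediately gives $\wh\bbP(\cA^\complement | \vec Y \in B) \leq \wh\bbP(\cA^\complement)/\Phi_{\NN(0,1)}(-1)$ via Lemma \ref{remove_conditioning_trick}, and the $o(1)$ bound then follows from Lemmas \ref{lem:oscillation_of_varphi_x}--\ref{lem:min_of_h_x} without any per-box conditional analysis. The price is that lower-bounding $\wh\bbP(\vec Y \in B)$ becomes a chi-squared estimate (diagonalizing $\wh\Sigma^\dagger$ and bounding the top eigenvalue by the trace), but this is routine. Your first step (GCI plus marginal bounds, giving the double-exponential lower bound for the full $\ell^\infty$-ball) is correct and arguably cleaner, but it is applied to the wrong set; it would need to be redone for the paper's smaller $B$, and the chi-squared route is the natural one there.
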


We will prove this lemma in the next section with the help of Lemma \ref{remove_conditioning_trick} and Lemma \ref{conditional_expectation_of_Gaussian}. Let us conclude this section with

\begin{proof}[Proof of Theorem \ref{main_tail_bound}, lower bound] By the discussion above, 
	\begin{align}
		\wh\bbP(\Omega_{V_N}(u)) \geq \wh\bbP(\{\vec Y \in B\} \cap \cA) &= \wh\bbP(\vec Y \in B) \wh\bbP(\cA|\vec Y \in B) \\
		& \geq (1-o(1)) e^{-C c_0 u e^{2c_0 u}}
		\end{align}
where we used Lemma \ref{lower_bound_main}. Finally, for $c > \sqrt{2\pi}$, we can choose $c_0 > \sqrt{\pi/2}$ with $2c_0 < c$, and the inequality above yields the lower bound \eqref{eq:main_tail_bound_lower} for $u \gg 1$. Since $\Omega_{V_N}(u)$ is decreasing in $u$, the lower bound extends to all $u \geq 1$ after increasing $C$.  
\end{proof}

\subsection{Proof of Section \ref{s:proof_lower_bound}'s lemmas}\label{s:proof_of_lemmas_lower_bound}

\begin{proof}[Proof of Lemma \ref{lem:oscillation_of_varphi_x}]Consider $x \in \mathbb{X}_r$. Set $\mu :=\mathbb{E}\left[\sup_{y\in Q_r(x)} | \varphi^{(x)} (y)-\varphi^{(x)} (x)|\right]$ and $\sigma^2:= \sup_{y\in Q_r(x)} \mathrm{Var}[\varphi^{(x)} (y)-\varphi^{(x)} (x)]$. By Borell-TIS, 
	\begin{equation}\label{eq:Borell_TIS_for_Vr}
		\mathbb{P}\left( |\sup_{y\in Q_r(x)} | \varphi^{(x)} (y)-\varphi^{(x)} (x)| -\mu| > t \right)\leq 2 \exp\left(-\frac{t^2}{2\sigma^2}\right). 
	\end{equation}
	To bound $\sigma^2$ we use Lemma \ref{lemma:diff_Delta_squared_modified}. In our case this gives $\sigma^2 \leq C \frac{r}{\delta r} = C \delta^{-1}$. By Lemma \ref{lemma:dudley_modified}, $\mu \leq C' \sqrt{\frac{r}{\delta r}} = C' \delta^{-\frac12}$. Thus, if we choose $t = u^{\frac34}$ in \eqref{eq:Borell_TIS_for_Vr}, we obtain
	\begin{equation}\label{eq:Borell_TIS_for_Vr_applied}
		\mathbb{P}\left( \sup_{y\in Q_r(x)} | \varphi^{(x)} (y)-\varphi^{(x)} (x)| > u^{\frac34} + C' \delta^{-\frac12} \right)\leq 2 \exp\left(-\frac{\delta u^{\frac32}}{2C}\right).
	\end{equation}
The result then follows from Union Bound over $x \in \mathbb{X}_r$.
\end{proof}

\begin{proof}[Proof of Lemma \ref{lem:min_of_h_x}] Recall $r = Ne^{-c_0 u}$, and $m_N = 2\sqrt{g} (\log N - \tfrac{3}{8} \log \log N) + O(1)$ where $g = 2/\pi$, so
	\begin{align}
		-m_N &= -m_r - (m_N - m_r) \\
		&= -m_r - 2\sqrt{g}( c_0 u + \tfrac{3}{8} \log (1 - \tfrac{c_0 u}{\log N}) + O(1)) \\
		&= -m_r -2\sqrt{g} c_0 u + O(1)
	\end{align}
	where we used $u = o(\log N)$. Now we can apply the tail bound from \cite[Theorem 1.4]{Ding-Zeitouni2013}: 
	\begin{align} 
		& \ \bbP\left(\min_{y\in Q_{r}(x)} h^{(x)}(y) \leq -m_N + ku\right) \\
		= & \ \bbP\left(\min_{y\in Q_{r}(x)} h^{(x)}(y) \leq -m_r -(2\sqrt{g} c_0 - k) u + O(1)\right) \\
		\lesssim & \ [(2\sqrt{g} c_0 - k) u \vee 1] \cdot e^{- \sqrt{2\pi}(2\sqrt{g} c_0 -  k) u} \\
		\lesssim & \ c_0 u e^{- (4 c_0 - \sqrt{2\pi} k) u}.
	\end{align}
	provided $(2\sqrt{g} c_0 -  k) u > 1$, else the bound is trivial. The result then follows from Union Bound over $x \in \mathbb{X}_r$.
\end{proof}

\begin{proof}[Proof of Lemma \ref{lower_bound_main}] Let $\Sigma$ and $\wh\Sigma$ be the covariance matrices of $\vec Y$ under $\bbP$ and $\wh\bbP$, respectively.  Consider the set \begin{equation}
		B := \{\|\vec y\|_2 \leq 1, \vec y\cdot \wh\Sigma^\dagger \vec y \leq 1 \}.
	\end{equation}	
	Clearly, $B \subset \{\|\vec y\|_\infty \leq 1\}$. Applying Lemma \ref{remove_conditioning_trick} with 
	\begin{equation}
		\vec X = (h^{(x)}(y), \varphi^{(x)}(y) - \varphi^{(x)}(x): x\in \mathbb{X}_r, y\in Q_{r'}(x))
	\end{equation}
	and then Lemma \ref{conditional_expectation_of_Gaussian} with $H \in \vec X$, we get the first property from \eqref{eq:overwhelming_probability_unconditional}:
	\begin{equation}
		\wh\bbP(\cA^\complement|\vec Y \in B) \leq \frac{\wh\bbP(\cA^\complement)}{\Phi_{\mathcal{N}(0, 1)}(-1)} = o(1). 
	\end{equation}
	It remains to show the second property. Let $\vec{Y}^\dagger := \sqrt{\wh\Sigma^\dagger} \vec Y$, and
	\begin{equation}
		B^\dagger := \{\vec y\cdot \wh\Sigma \vec y \leq 1, \|\vec y\|_2 \leq 1\}.
	\end{equation}
	Then the events $\vec{Y} \in B$ and $\vec{Y}^\dagger \in  B^\dagger$ coincide $\wh\bbP$-a.s. Moreover, bounding the maximal eigenvalue of $\wh\Sigma$ by its trace, we see that $B^\dagger$ must further contain
	\begin{equation}
	B' := \{(\operatorname{Tr}(\wh\Sigma) \vee 1) \|\vec y\|^2_2 \leq 1 \}.
	\end{equation}
	Recall from below \eqref{eq:centers_of_boxes_binding_field} that $\# \mathbb{X}_r \asymp e^{2c_0 u}$ and $\var(\varphi^{(x)}(x)) \asymp \log \# \mathbb{X}_r$, so 
	$$\operatorname{Tr}(\Sigma) \asymp \# \mathbb{X}_r \log \mathbb{X}_r.$$ 
	Conditioning on $Z_h = 0$ only reduces the variances, so $\operatorname{Tr}(\wh\Sigma) \leq \operatorname{Tr}(\Sigma).$ Now, notice that the covariance matrix of $\vec{Y}^\dagger$ is the orthogonal projection onto $\operatorname{Range}(\wh\Sigma)$, so after an orthogonal transformation we may assume the first $\operatorname{rank}(\wh\Sigma)$ entries of $\vec{Y}^\dagger$ are i.i.d.~standard normals and the remaining entries are zeros. Overall, we have
	\begin{align}\label{eq:double_exponential_tail_second}
	\wh\bbP(\vec{Y} \in B) = \wh\bbP(\vec{Y}^\dagger \in  B^\dagger) \geq  \wh\bbP(\vec{Y}^\dagger \in  B') \geq \bbP\left(\sum_{i=1}^{\operatorname{rank}(\wh\Sigma)} U_i^2 \leq \operatorname{Tr}(\Sigma)^{-1}\right)
\end{align}
where $U_i$ are i.i.d.~standard normals, and this can be lower bounded by
\begin{align}
	\bbP\left(U_1^2 \leq \tfrac{\operatorname{Tr}(\Sigma)^{-1}}{\operatorname{rank}(\wh\Sigma)}\right)^{\operatorname{rank}(\wh\Sigma)} \geq e^{-(1+o(1)) \operatorname{rank}(\wh\Sigma) \log \# \mathbb{X}_r } \geq e^{- C c_0 u e^{2c_0 u}}.
\end{align}
\end{proof}

\section{Proofs of auxilary lemmas}\label{s:proofs_preliminaries}
\label{s:a}

First, we give

\begin{proof}[Proof of Lemma \ref{capacity_bound}] By the variational characterization \eqref{def_capacity}, the (discrete) relative capacity is monotonically decreasing when we enlarge the outer domain and shrink the inner domain. Thus, if we choose two concentric discrete balls $B_{in} \subset V_{N}$ with radius $R_{in} \asymp N$ and $B_{out} \supset W_N$ with $R_{out} \asymp N$, then
	\begin{equation}\label{eq:capacity_monotonicty}
		\operatorname{Cap}^{W_N}(V_{N}) \ge \operatorname{Cap}^{B_{out}}(B_{in}).
	\end{equation}	
	Now, we know that (as in the proof of Lemma \ref{lem:sigma_Z_representation}),
	\begin{equation}\label{eq:low_cap}
		2 \operatorname{Cap}^{B_{out}}(B_{in}) = \sum_{z \in \partial^{\text{in}} B_{in}} \mathsf{P}^z(\tau_{\partial^{\text{out}} B_{out}} < \tau^+_{B_{in}}).
	\end{equation}
	By \cite[Proposition~6.4.1]{LawlerLimic2010} each probability in the last sum is uniformly lower bounded by a constant times $ 1/R_{in} \asymp 1/N$. Since $|\partial^{\text{in}} B_{in}| \asymp N$, the sum in \eqref{eq:low_cap} is at least a constant times $N \cdot \frac{1}{N} \asymp 1$. This gives a lower bound for $\operatorname{Cap}^{W_N}(V_{N})$. For the upper bound, we can use the monotonicty property \eqref{eq:capacity_monotonicty} in reverse: fix a closed $C^2$ curve $\Gamma \subset W\setminus V$, let $U$ be the bounded region enclosed by $\Gamma$, and $U_N := N U \cap \mathbb{Z}^2$. Then for large $N$,
	\begin{equation}
		\operatorname{Cap}^{W_N}(V_{N}) \le \operatorname{Cap}^{W_N}(U_N).
	\end{equation}	
	As before we have
	\begin{equation}\label{eq:upper_cap}
		2 \operatorname{Cap}^{W_N}(U_N) = \sum_{z \in \partial^{\text{in}} U_N} \mathsf{P}^z(\tau_{\partial^{\text{out}} W_N} < \tau^+_{U_N}) \leq \sum_{z \in \partial^{\text{out}} U_N} \mathsf{P}^z(\tau_{\partial^{\text{out}} W_N} < \tau_{U_N}).
	\end{equation}
	By Lemma \ref{lemma:escape_probability_general} (with $U = W_N\setminus U_N$, $E = \partial^{\text{in}} U_N$, $D = \partial^{\text{out}} W_N$, and $\Gamma = N U$), each probability in the last sum is upper bounded by a constant times $1/\dist(U_N, W_N^\complement) \asymp 1/N$. Since $\partial U = \Gamma$ is $C^2$, $|\partial^{\text{out}} U_N| \asymp N$, so the sum in \eqref{eq:upper_cap} is at most a constant times $N \cdot \frac{1}{N} \asymp 1$. This gives an upper bound for $\operatorname{Cap}^{W_N}(V_{N})$.
\end{proof}

The remainder of this section is devoted to proving the Lemmas in Section \ref{s:preliminaries}.
\begin{proof}[Proof of Lemma \ref{remove_conditioning_trick}] WLOG we may assume $\vec x = \vec 0$ and $\vec Y$ centered. By Gaussian decomposition, we have
	\begin{align}
		\vec X &= (\vec X|\vec Y = \vec 0) \oplus (\bbE[\vec X|\vec Y] - \bbE[\vec X]) \\
		&= (\vec X|\vec Y = \vec y) \oplus (\bbE[\vec X|\vec Y]-\bbE[\vec X|\vec Y=y])
	\end{align}
	where $\oplus$ denotes independent sum (as usual). The desired result is thus a special case of the following general fact: if $\vec X = \vec X' \oplus \vec X''$, then
	\begin{equation}
		\bbP(\min_i X_i \leq 0) \geq \bbP(\min_i X'_i \leq 0) \min_i \bbP(X''_i \leq 0).
	\end{equation}
	To see this, let $J = \inf\{i: X'_i \leq 0\}$. Then $\min_i X'_i \leq 0$ iff $J < \infty$, in which case $X'_J \leq 0$, and $J$ being a function of $\vec X'$ is independent of $\vec X''$, so
	\begin{align}
		\bbP(\min_i X_i \leq 0) &\geq \bbP(J < \infty, X''_J \leq 0) \\
		&= \sum_{i < \infty} \bbP(J = i, X''_i \leq 0) = \sum_{i < \infty} \bbP(J = i) \bbP(X''_i \leq 0) \\
		& \geq \bbP(J < \infty) \min_i \bbP(X''_i \leq 0) = \bbP(\min_i X'_i \leq 0) \min_i \bbP(X''_i \leq 0).
	\end{align}
\end{proof}

\begin{proof}[Proof of Lemma \ref{conditional_expectation_of_Gaussian}] W.l.o.g. we may assume $X$ is centered and $\bbE[X|\vec Y]$ is non-degenerate. By Gaussianity, $\bbE[X|\vec Y] = \vec{l}\cdot \vec Y$ for some $\vec l \neq \vec 0$. Then
	\begin{align}
		\bbE[X|\vec Y = \vec y] = \vec{l}^T \vec y
		= \vec{l}^T \sqrt{\Sigma} \cdot \sqrt{\Sigma^\dagger} \vec y
		\geq -\sqrt{\vec{l}\cdot \Sigma \, \vec{l}} \cdot \sqrt{\vec{y}\cdot \Sigma^\dagger \vec y}
	\end{align}
	by Cauchy-Schwarz inequality, where
	\begin{equation}
		\vec{l}\cdot \Sigma \, \vec{l} = \var(\vec l\cdot \vec Y) = \var (\bbE[X|\vec Y]).
	\end{equation}
	Thus,
	\begin{equation}
		\bbP\left(\bbE[X|\vec Y] \leq \bbE[X|\vec Y = \vec y]\right) \geq \bbP\left(\bbE[X|\vec Y] \leq -\sqrt{\var (\bbE[X|\vec Y])}  \cdot \sqrt{\vec{y}\cdot \Sigma^\dagger \vec y}\right)
	\end{equation}
	and the result follows after normalizing $\bbE[X|\vec Y]$ by its standard deviation.
\end{proof}

\begin{proof}[Proof of Lemma \ref{lemma:magnitude_Delta}] This is a standard computation of Green functions. By the Gibbs-Markov decomposition $h^{W} = h^{V} \oplus \phi^{W,V}$, we have
	\begin{equation}
		\var(\phi^{W, V}(x)) = \var(h^W(x)) - \var(h^V(x)) = G_{W}(x, x) - G_{V}(x, x) 
	\end{equation}
	By \cite[Proposition~4.6.2]{LawlerLimic2010} and \cite[Theorem~4.4.4]{LawlerLimic2010},
	\begin{align}
		G_{W}&(x, x) - G_{V}(x, x)  = \sum_{\partial^{\text{out}} W} \Pi_{W}(x, w) a(x-w) - \sum_{\partial^{\text{out}} V} \Pi_{V}(x, v) a(x-v) \\
		&\leq g \max_{w\in \partial^{\text{out}} W} \log \|x-w\|_2 - g \min_{w\in \partial^{\text{out}} V} \log \|x-v\|_2 + O(\dist( x, \partial^{\text{out}} V)^{-2})\\
		&\leq g \log (\diam W) - g\log (\dist(x, V^\complement)) + O(\dist(x, V^\complement)^{-1})
	\end{align}
	where $a$ denotes the potential kernel on $\mathbb{Z}^2$ and $\Pi_A(x, \cdot)$ denotes the harmonic measure from $x$ in $A$. This gives the upper bound. The lower bound is analogous. 
\end{proof}

\begin{proof}[Proof of Lemma \ref{lemma:diff_Delta_squared_modified}]
	By the Gibbs-Markov decomposition $h^{W} = h^{V} \oplus \phi^{W,V}$, 
	\begin{align}\label{eq:3.4}
		& \mathbb{E}\left[\left(\phi^{W, V}(x)-\phi^{W, V}(y)\right)^2\right]  = \mathbb{E}\left[\left(h^{W}(x)-h^{W}(y)\right)^2\right] - \mathbb{E}\left[\left(h^{V}(x)-h^{V}(y)\right)^2\right]
		\\
		&= (	G_{W}(x,x)- G_{W}(x,y)) +	(G_{W}(y,y)- G_{W}(x,y))\\
		&\quad - (	G_{V}(x,x)- G_{V}(x,y))- (	G_{V}(y,y)- G_{V}(x,y)).
	\end{align}
	By \cite[Proposition~4.6.2]{LawlerLimic2010} and \cite[Theorem~4.4.4]{LawlerLimic2010},
	\begin{align}
		G_{W}(x,x)&- G_{{W}}(x,y) = a(x-y) + \sum_{z\in \partial^{\text{out}} {W}}\Pi_{W}(x,z) \left(a(x-z)-a(y-z)\right) \\
		&= a(x-y) + g \sum_{z\in \partial^{\text{out}} {W}}\Pi_{W}(x,z) \log \left(\frac{\|x-z\|_2}{\|y-z\|_2}\right) + O(\dist( U, W^\complement)^{-2}) \label{eq:3.6}
	\end{align}
	where $a$ denotes the potential kernel on $\mathbb{Z}^2$ and $\Pi_W(x, \cdot)$ denotes the harmonic measure from $x$ in $W$. Using the inequality $\log(1+x) \leq x$, we have for any $z\in \partial^{\text{out}} {W}$
	\begin{equation}\label{eq:3.7}
		\log \left(\frac{\|x-z\|_2}{\|y-z\|_2}\right) \leq \log\left(\frac{\|y-z\|_2+\|y-x\|_2}{\|y-z\|_2}\right)  
		\leq \frac{\|y-x\|_2}{\dist(U,W^\complement)}.
	\end{equation}
	This shows
	\begin{equation}
		G_{W}(x,x)- G_{{W}}(x,y) \leq a(x,y) +  g \frac{\|y-x\|_2}{\dist(U, W^\complement)} +O(\dist(U, W^\complement)^{-2}).
	\end{equation}
	and the same bound holds for $G_{W}(y,y)- G_{{W}}(x,y)$ by switching $x$ and $y$. Next, we bound \emph{from below} $G_{{V}}(x,x) - G_{{V}}(x,y)$. We use \eqref{eq:3.6} with $V $ in place of $W$, and rewrite
	\begin{equation}
		\log \left(\frac{\|x-z\|_2}{\|y-z\|_2}\right) = - \log \left(\frac{\|y-z\|_2}{\|x-z\|_2}\right)
	\end{equation}
	so the (positive) upper bound in \eqref{eq:3.7} yields a (negative) lower bound after switching $x$ and $y$.  
	Hence,
	\begin{equation}
		G_{{V}}(x,x) - G_{{V}}(x,y) \geq a(x-y) -  g\frac{\|x-y\|_2}{\dist(U, V^\complement)} +O(\dist(U, V^\complement)^{-2})
	\end{equation}
	and the same lower bound holds for $G_{V}(y,y)- G_{{V}}(x,y)$ by switching $x$ and $y$.
	Lastly, note that $\dist(U, V^\complement) \leq \dist(U, W^\complement)$ and all occurrences of $a(x, y)$'s cancel.
	Collecting everything and plugging into \eqref{eq:3.4} concludes the proof.
\end{proof}

\begin{proof}[Proof of Lemma \ref{lemma:dudley_modified}] This follows from Fernique's majorizing theorem with the normalized counting measure on $U$. In more details, let $\rho(x,y) := \sqrt{\mathbb{E}[(\Delta(x)-\Delta(y))^2]}$ denote the corresponding natural intrinsic metric. Let $B_{\rho} (x,r):=\{y\in U: \rho(x,y)<r\}$ and $\cC_r(x):=\{y\in \mathbb{Z}^2: \|x-y\|_2<r\}$. By assumption \eqref{eq:dudley_assumption_2}, $\diam_{\rho}(U) \leq \sqrt{L \diam(U)}$ and
	\begin{equation}
		B_{\rho}(x, \sqrt{Lr}) \supset \cC_r(x) \cap U,
	\end{equation}
	for $x\in U$. This implies
	\begin{equation}
		| B_{\rho}(x,r)| \geq | \cC_{L^{-1} r^2}(x) \cap U| \geq c_U L^{-2}r^4
	\end{equation} 
	for $x\in U, r\leq \diam_\rho (U)$. Applying Fernique's majorizing theorem with majorizing measure the normalized counting measure on $U$,
	\begin{align}
		\mathbb{E}\left[\sup_{x,y\in U} | \Delta (x)-\Delta(y)|\right]&\leq K \int_{0}^{\diam_\rho (U)}\sqrt{\log \left(\frac{1}{|B_{\rho}(x,r)| /| U|}\right)} \mathrm{d}r\\
		& \leq K \int_{0}^{\diam_\rho (U)}\sqrt{\log \left(\frac{1}{c_U L^{-2} r^4 /|U|}\right)} \mathrm{d}r \\
		&\leq K (| U| \cdot L^2 /c_U)^{1/4} \int_{0}^{1}\sqrt{\log \left(1/r^4\right)} \mathrm{d}r \\
		&\lesssim K c_U^{-\tfrac 14}\sqrt{L \diam(U)}
	\end{align} 
	where we applied a rescaling in $r$, that $r\to \sqrt{\log(1/r^4)}$ is integrable near $0$, and that $| U| \lesssim \diam(U)^2$.
\end{proof}

\begin{proof}[Proof of Lemma \ref{lemma:escape_probability_general}] First we may assume 
	\begin{equation}\label{eq:escape_non_degenerate}
		0 < \dist(x, E) \leq \dist(x, D)/4, \text{ and } \dist(x, D) \geq 2K M
	\end{equation}
	for a large constant $K \geq 6$ to be chosen later, otherwise the estimate is trivial. The curvature bound \eqref{eq:Gamma_assumption_curvature} on $\partial \Gamma$ implies a \emph{uniform interior ball condition} for the region $\Gamma$  (see e.g. \cite[Section~14.16]{GilbargTrudinger2001}). Specifically, we choose a radius
	\begin{equation}\label{eq:radius_of_curvature} 
		r = \min\left(\frac{1}{\kappa_{\max}}, \frac{\dist(x, D)}{K}\right) \geq 2M.
	\end{equation}
	This choice ensures two geometric properties. First, at any point on $\partial \Gamma$, there exists a ball of radius $r$ tangent to $\partial \Gamma$ completely contained inside $\Gamma$. Secondly, the radius is small enough compared to the distance to the escape set $D$.
		
	Take $x' \in E$ closest to the starting point $x$, i.e. $\|x - x'\|_2 = \dist(x, E)$. Then take $y\in \partial \Gamma$ closest to $x'$, so $\|y - x'\|_2 = \dist(x', \partial \Gamma) \leq M$ by assumption \eqref{eq:Gamma_assumption_1}. Let $\nu(y)$ be the unit normal vector at $y$ pointing \textit{into} the region $\Gamma$, and define
	\begin{equation} 
		z := y + r \nu(y).
		\end{equation}
	 Then $z \in \mathbb{R}^2$ is the center of a ball of radius $r$ tangent to $\partial \Gamma$ at $y$ completely contained inside $\Gamma$. We deduce the following geometric bounds:  
	\begin{itemize}
		\item By triangle inequality
		\begin{align}\label{eq:inner_dist}
			\dist(z, E) \leq \|z - y\|_2 + \|y - x'\|_2 \leq r + M,
		\end{align}
		and by assumption \eqref{eq:Gamma_assumption_1}
		\begin{align}\label{eq:inner_dist_2}
			\dist(z, E) \geq \dist(z, \partial \Gamma) - \max_{w\in E} \dist(w, \partial \Gamma) \geq r - M.
		\end{align}
		\item By triangle inequality
		\begin{align}\label{eq:start} 
			\|x - z\|_2 &\leq \|x - x'\|_2 + \|x' -y\|_2 + \|y - z\|_2 \\
			&\leq \dist(x, E) + M + r,
		\end{align}
		and by the second part of assumption \eqref{eq:Gamma_assumption_1},
		\begin{equation}\label{eq:start_lower} 
			\|x-z\|_2 \geq \dist(z, \Gamma^\complement) - \dist(x, \Gamma^\complement) \geq r - M.
		\end{equation}
		\item By (reverse) triangle inequality and \eqref{eq:start},
		\begin{align}\label{eq:outer_dist}
		\dist(z, D) &\geq \dist(x, D) - \|x-z\|_2 \\
		&\geq \dist(x, D)  - r - M - \dist(x, E)\geq \dist(x, D)/2, 
 			\end{align}
		where we used the conditions \eqref{eq:escape_non_degenerate} and \eqref{eq:radius_of_curvature}.
	\end{itemize}
	
	Let $\tilde{z} \in \mathbb{Z}^2$ be a lattice point closest to $z$. By \eqref{eq:start_lower} applied over all $x\in U$,
	\begin{equation}
		\dist(z, U) \geq r - M \overset{\eqref{eq:radius_of_curvature} }{\geq} M \geq 1,
	\end{equation}
	which shows $\tilde{z}$ is outside $U$. We utilize the potential kernel $a: \mathbb{Z}^2 \to \mathbb{R}$ for the simple random walk.
	According to \cite[(6.11)]{LawlerLimic2010}, the potential kernel satisfies the asymptotic expansion
	\begin{equation}\label{eq:a_asymp}
		a(u) = \tfrac{2}{\pi} \log \|u\|_2 + \gamma_2 + O(\|u\|_2^{-2}) \quad \text{as } \|u\|_2 \to \infty,
	\end{equation}
	where $\gamma_2$ is some constant. Furthermore, by \cite[Proposition~4.4.2]{LawlerLimic2010}, $a$ is harmonic on $\mathbb{Z}^2 \setminus \{0\}$.
	Define the function $\tilde a (u) := a(u - \tilde{z})$. Since $\tilde{z}$ is outside $U$, $\tilde a$ is harmonic for the random walk up to the stopping time $\tau_{U^\complement} = \tau_{D} \wedge \tau_{E}$.
	Let $p := \mathsf{P}^x(\tau_{D} < \tau_{E})$. By the Optional Stopping Theorem
	\begin{equation} 
		\tilde a(x) = \mathbb{E}^x[\tilde a(S_{\tau_{U^\complement} })] \ge p \min_{w \in D} \tilde a(w) + (1-p) \min_{v \in E} \tilde a(v). 
	\end{equation}
	Rearranging and using the asymptotics \eqref{eq:a_asymp} we get
	\begin{equation}
		p \leq \frac{\log\left(\displaystyle\frac{\|x-\tilde{z}\|_2}{\dist(\tilde z, E)}\right)  + \displaystyle\max_{u\in \{x\} \cup E \cup D} O(\|u-\tilde{z}\|_2^{-2})} {\log\left(\displaystyle\frac{\dist(\tilde z, D)}{\dist(\tilde z, E)}\right)}.
	\end{equation}
 After replacing $\tilde{z}$ with $z$ and accounting for the resulting $O(1)$ discretization errors inside the logarithms, the geometric bounds \eqref{eq:inner_dist}, \eqref{eq:inner_dist_2}, \eqref{eq:start}, \eqref{eq:start_lower}, and \eqref{eq:outer_dist} give us
	\begin{equation}
		p \leq \frac{\log\left(1 + \tfrac{\dist(x, E)}{r}\right) + O(\tfrac{M}{r})}{\log\left(\frac{\dist(x, D)}{2r}\right) + O(\tfrac{M}{r})}.
	\end{equation}
	By the inequality $\log(1+t) \leq t$, the numerator is at most $\tfrac{\dist(x, E)+O(M)}{r}$. In the denominator, the assumption \eqref{eq:escape_non_degenerate} implies $\log\left(\frac{\dist(x, D)}{2r}\right)$ is at least $\log (K/2)$, while the error term is at most $O(M/2M) = O(1)$. By choosing $K$ to be a large absolute constant, the error term may be absorbed at the cost of a $1/2$ factor in the denominator. Overall we have
	\begin{align}
		p&\leq \frac{\dist(x, E)+O(M)}{r \log\left(\frac{\dist(x, D)}{2r}\right)/2} \\
		&\lesssim_M \frac{\dist(x, E)}{\dist(x, D)} \cdot \frac{\tfrac{\dist(x, D)}{2r}}{\log\left(\frac{\dist(x, D)}{2r}\right)},
	\end{align}
	where $\lesssim_M$ means the implicit constant depends only on $M$. It remains to bound the last factor, which is of the form $s/\log s$ with $s = \tfrac{\dist(x, D)}{2r}$. By our choice of $r$ in \eqref{eq:radius_of_curvature}, $s = \max(K/2, \kappa_{\max} \tfrac{\dist(x, D)}{2})$. Since $\dist(x, D) \leq \diam(U)$ and $\kappa_{\max} \diam(U)\leq R$ by assumption \eqref{eq:Gamma_assumption_curvature}, $K/2 \leq s \leq \max(K/2, R/2)$, and $s/\log s$ is bounded by a constant depending on $R$.
\end{proof}

\section*{Acknowledgements}
We thank Naomi and Ohad Feldheim for very useful discussions, ultimately leading to the decomposition in~\eqref{e:1.11}.
The research of O.L., M.F. and T.W were supported by ISF grants no.~2870/21 and~3782/25.
The research of T.W. was also supported by a fellowship from the Lady Davis Foundation.
The research of M.F. was also supported by the Deutsche Forschungsgemeinschaft (DFG) grant  DR 1096/2-1.

\setlength\parskip{0pt}
\bibliographystyle{abbrv}
\bibliography{bib} 
\end{document}